

\documentclass[leqno]{prims}
\usepackage{hyperref}

\usepackage{enumerate}

\usepackage{amsmath,mathtools,amssymb,extarrows,mathrsfs}
\usepackage{tikz-cd}
\usepackage{verbatim}
\tikzcdset{every label/.append style = {font = \small}} 
\usepackage{xcolor}
\usepackage[all,cmtip]{xy}


\newtheorem{thm}{Theorem}[section]
\newtheorem{cor}[thm]{Corollary}
\newtheorem{lemma}[thm]{Lemma}
\newtheorem{prop}[thm]{Proposition}



\theoremstyle{definition}
\newtheorem{defi}[thm]{Definition}
\newtheorem{rem}[thm]{Remark}

\newtheorem{lemmadefi}[thm]{Lemma-Definition}



\numberwithin{equation}{section}






\newcommand{\D}{\mathbb{D}}
\newcommand{\C}{\mathbb{C}}

\newcommand{\PP}{\mathbb{P}}
\newcommand{\R}{\mathbb{R}}
\newcommand{\Z}{\mathbb{Z}}
\newcommand{\EE}{\mathrm{E}}
\newcommand{\LL}{\mathrm{L}}
\newcommand{\RR}{\mathrm{R}}
\newcommand{\cA}{\mathcal{A}}
\newcommand{\cD}{\mathcal{D}}
\newcommand{\cE}{\mathcal{E}}

\newcommand{\cH}{\mathcal{H}}
\newcommand{\cL}{\mathcal{L}}
\newcommand{\cM}{\mathcal{M}}
\newcommand{\cN}{\mathcal{N}}

\newcommand{\cO}{\mathcal{O}}
\newcommand{\cX}{\mathcal{X}}

\newcommand{\Mod}[1]{\mathrm{Mod}(#1)}
\newcommand{\indlim}[1]{\mathop{``\varinjlim"}\limits_{#1}}
\newcommand{\DbholD}[1]{\mathrm{D}^\mathrm{b}_\mathrm{hol}(\mathcal{D}_{#1})}
\newcommand{\ModD}[1]{\mathrm{Mod}(\mathcal{D}_{#1})}
\newcommand{\DbC}[1]{\mathrm{D}^\mathrm{b}(\C_{#1})}
\newcommand{\DbRcC}[1]{\mathrm{D}^\mathrm{b}_{\mathbb{R}\text{-}\mathrm{c}}(\C_{#1})}

\newcommand{\DbRck}[1]{\mathrm{D}^\mathrm{b}_{\mathbb{R}\text{-}\mathrm{c}}(k_{#1})}
\newcommand{\DbCck}[1]{\mathrm{D}^\mathrm{b}_{\mathbb{C}\text{-}\mathrm{c}}(k_{#1})}
\newcommand{\ModRcC}[1]{\mathrm{Mod}_{\mathbb{R}\text{-}\mathrm{c}}(\C_{#1})}

\newcommand{\ModRck}[1]{\mathrm{Mod}_{\mathbb{R}\text{-}\mathrm{c}}(k_{#1})}
\newcommand{\ModCck}[1]{\mathrm{Mod}_{\mathbb{C}\text{-}\mathrm{c}}(k_{#1})}

\newcommand{\EbIC}[1]{\mathrm{E}^\mathrm{b}(\mathrm{I}\C_{#1})}
\newcommand{\EbRcIC}[1]{\mathrm{E}^\mathrm{b}_{\mathbb{R}\textnormal{-}\mathrm{c}}(\mathrm{I}\C_{#1})}
\newcommand{\Dbk}[1]{\mathrm{D}^\mathrm{b}(k_{#1})}
\newcommand{\DbIk}[1]{\mathrm{D}^\mathrm{b}(\mathrm{I}k_{#1})}
\newcommand{\EbIk}[1]{\mathrm{E}^\mathrm{b}(\mathrm{I}k_{#1})}
\newcommand{\EbRcIk}[1]{\mathrm{E}^\mathrm{b}_{\mathbb{R}\textnormal{-}\mathrm{c}}(\mathrm{I}k_{#1})}

\newcommand{\Amod}{\cA^\mathrm{mod}_{\widetilde{X}}}
\newcommand{\Ard}{\cA^{\mathrm{rd}}_{\widetilde{X}}}
\newcommand{\AmodD}{\cA^\mathrm{mod}_{\XD}}
\newcommand{\Amodf}{\cA^\mathrm{mod}_{\Xf}}
\newcommand{\Ardf}{\cA_{\Xf}^{\mathrm{rd}}}

\newcommand{\Ot}[1]{\mathcal{O}^\mathrm{t}_{#1}}
\newcommand{\DR}{\mathrm{DR}_X}

\newcommand{\DRbuD}{\mathrm{DR}_{\XD}}

\newcommand{\DRE}{\mathrm{DR}^\EE_X}

\newcommand{\DREbuD}{\mathrm{DR}^\EE_{\XD}}
\newcommand{\DREbuf}{\mathrm{DR}^\EE_{\Xf}}
\newcommand{\DRmod}{\mathrm{DR}^\mathrm{mod}_{\widetilde{X}}}
\newcommand{\DRmodD}{\mathrm{DR}^\mathrm{mod}_{\XD}}
\newcommand{\DRmodf}{\mathrm{DR}^\mathrm{mod}_{\Xf}}
\newcommand{\DRrd}{\mathrm{DR}^\mathrm{rd}_{\widetilde{X}}}
\newcommand{\DRrdD}{\mathrm{DR}^\mathrm{rd}_{\widetilde{X}_D}}
\newcommand{\DRrdf}{\mathrm{DR}^\mathrm{rd}_{\widetilde{X}_f}}
\newcommand{\SolE}{\mathcal{S}ol^\EE_X}

\newcommand{\SolEbuD}{\mathcal{S}ol^\EE_{\widetilde{X}_D}}

\newcommand{\sh}[1]{\mathsf{sh}_{#1}}

\newcommand{\shbu}{\mathsf{sh}_{\widetilde{X}}}
\newcommand{\shbuD}{\mathsf{sh}_{\widetilde{X}_D}}
\newcommand{\shbuf}{\mathsf{sh}_{\widetilde{X}_f}}

\newcommand{\RIhom}{\mathrm{R}\mathcal{I}hom}
\newcommand{\RHomE}{\RR\mathcal{H}om^\EE}
\DeclareMathOperator{\Hom}{RHom}
\newcommand{\sHom}{\textnormal{R}\mathcal{H}om}
\newcommand{\DE}{\mathrm{D}^\EE_X}
\newcommand{\hyp}{\mathbb{H}}
\newcommand{\HdR}{\mathrm{H}_{\mathrm{dR}}}
\newcommand{\Hrd}{\mathrm{H}^{\mathrm{rd}}}

\newcommand{\RGamma}{\textnormal{R}\Gamma}

\newcommand{\XD}{\wt{X}_D}
\newcommand{\Xf}{\wt{X}_f}

\newcommand{\defeq}{\vcentcolon=}
\newcommand{\wt}[1]{\widetilde{#1}}
\newcommand{\iso}{\simeq}
\newcommand{\conv}{\overset{+}{\otimes}}

\newcommand{\To}{\longrightarrow}
\newcommand{\ToPO}{\overset{+1}{\longrightarrow}}

\newcommand{\id}{\mathrm{id}}

\newcommand{\fj}{\mbox{$\tilde{j}_f$}}
\newcommand{\gj}{\mbox{$\tilde{j}_g$}}
\newcommand{\Dj}{\mbox{$\tilde{j}_D$}}
\newcommand{\tj}{{\tilde{j}}}
\newcommand{\Xtot}{\widetilde{X}^{\mathrm{tot}}_D}
\newcommand{\itot}{i^{\mathrm{tot}}}
\newcommand{\ttau}{\mbox{$\wt{\tau}$}}
\newcommand{\ttaug}{\mbox{$\wt{\tau}_g$}}
\newcommand{\tpzero}{\mbox{$\wt{p}_0$}}

\newcommand{\FN}{\mathrm{FN}}
\newcommand{\DFN}{\mathrm{DFN}}

\newcommand{\wtens}{\overset{\mathrm{w}}{\otimes}}
\newcommand{\Ltens}{\overset{\mathrm{L}}{\otimes}}
\newcommand{\Cinf}[1]{\mathscr{C}^\infty_{#1}}
\newcommand{\Dbt}[1]{\mathcal{D}b^{\mathrm{t}}_{#1}}
\newcommand{\Dbtv}[1]{\mathcal{D}b^{\mathrm{t\vee}}_{#1}}
\newcommand{\varpitot}{\varpi^{\mathrm{tot}}}


\VolumeNo{4x}
\YearNo{201x}
\communication{Communicated by T. Mochizuki. Received December 24, 2022. Revised March 23, 2023; June 29, 2023.}


\begin{document}



\TitleHead{Moderate Growth and Rapid Decay Cycles}
\title{Moderate Growth and Rapid Decay Nearby Cycles via Enhanced Ind-Sheaves}


\AuthorHead{Hepler and Hohl}
\author{Brian \textsc{Hepler}\footnote{Hepler: University of Wisconsin-Madison, 480 Lincoln Drive, Madison, Wisconsin 53706, USA;
\email{bhepler@wisc.edu}} and
Andreas \textsc{Hohl}\footnote{Hohl: Université Paris Cité and Sorbonne Université, CNRS, IMJ-PRG (Institut de Mathématiques de Jussieu-Paris Rive Gauche), F-75013 Paris, France; \email{andreas.hohl@imj-prg.fr}}
}

\classification{32S60, 32S40, 32C38.}
\keywords{holonomic D-module, enhanced ind-sheaf, irregular singularity, Riemann-Hilbert correspondence, nearby cycles.}

\maketitle

\begin{abstract}
For any holomorphic function $f\colon X\to\C$ on a complex manifold $X$, we define and study moderate growth and rapid decay objects associated to an enhanced ind-sheaf on $X$. These will be sheaves on the real oriented blow-up space of $X$ along $f$. We show that, in the context of the irregular Riemann--Hilbert correspondence of D'Agnolo--Kashiwara, these objects recover the classical de Rham complexes with moderate growth and rapid decay associated to a holonomic $\cD_X$-module. 

    In order to prove the latter, we resolve a recent conjectural duality of Sabbah between these de Rham complexes of holonomic $\cD_X$-modules with growth conditions along a normal crossing divisor by making the connection with a classic duality result of Kashiwara--Schapira between certain topological vector spaces. Via a standard d\'evissage argument, we then prove Sabbah's conjecture for arbitrary divisors. As a corollary, we recover the well-known perfect pairing between the algebraic de Rham cohomology and rapid decay homology associated to integrable connections on smooth varieties due to Bloch--Esnault and Hien.
\end{abstract}

\tableofcontents      

\section{Introduction}\label{sec:intro}

The theories of ind-sheaves (initiated in \cite{KS01}) and enhanced ind-sheaves (established in \cite{DK16}) led to an extension of the classical Riemann--Hilbert correspondence for regular holonomic $\cD$-modules established by M.\ Kashiwara in \cite{Kas80}, \cite{Kas84} (see also Z.\ Mebkhout \cite{Meb84} for a different proof). This correspondence states that the de Rham functor
$$\DR\colon \mathrm{D}^\mathrm{b}_{\mathrm{rh}}(\cD_X)\overset{\sim}{\longrightarrow} \mathrm{D}^\mathrm{b}_{\C\text{-}\mathrm{c}}(\C_X)$$
from the derived category of regular holonomic D-modules to the derived category of $\C$-constructible sheaves on a complex manifold $X$ is an equivalence of derived categories.

It was not difficult to observe that this functor is no longer fully faithful on the larger category of (not necessarily regular) holonomic $\cD_X$-modules; the simplest examples of this failure come from the so-called \emph{exponential $\cD_X$-modules} with poles along a divisor $D \subset X$. These objects are of the form $\mathcal{E}_{X\setminus D | X}^\varphi \vcentcolon= (\cO_X(*D),d-d\varphi)$, where the ``exponent" $\varphi \in \cO_X(*D)/\cO_X$ is an (unbounded) meromorphic function with poles along $D$, or more generally, a Puiseux germ defined in some sectorial neighborhood along $D$.

Finding a suitable target category for this conjectural ``irregular" Riemann--Hilbert correspondence was a long-standing program that led to the development of the above-mentioned theories and the following result due to Andrea D'Agnolo and Masaki Kashiwara:

\begin{thm}[{\cite[Theorem 9.5.3]{DK16}}]
Let $X$ be a complex manifold. Then, there is a fully faithful functor 
$$
\DRE : \DbholD{X} \hookrightarrow \EbRcIC{X},
$$
called the \emph{enhanced de Rham functor}, from the derived category of holonomic $\cD_X$-modules to the triangulated tensor category of $\R$-constructible \emph{enhanced ind-sheaves} on $X$.\footnote{In fact, one can restrict the target category further: The essential image is precisely the category of $\C$-constructible enhanced ind-sheaves, a notion introduced in \cite{Ito20} (see also \cite{Kuw21} for an alternative approach).  In this way, the functor $\DRE$ becomes an equivalence of categories, but we will not use these notions here.}
\end{thm}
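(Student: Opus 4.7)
The plan is to establish the full faithfulness in three broad stages: (i) give a workable definition of $\DRE$ via an enhanced de Rham complex, (ii) reduce the statement to an explicit computation on elementary exponential D-modules using a resolution of turning points, and (iii) carry out the elementary computation, with the classical regular Riemann--Hilbert correspondence of Kashiwara handling the regular pieces.

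For the construction, I would set $\DRE(\cM) \defeq \Omega_X^{\EE} \otimes^{\mathrm{L}}_{\cD_X} \cM[d_X]$, where $\Omega_X^{\EE}$ is the enhanced de Rham complex built from the enhanced structure sheaf $\cO_X^{\EE}$ (equivalently, from the ind-sheaf of tempered holomorphic functions on a real oriented blow-up). The companion functor $\SolE(\cM) \defeq \RHomE_{\cD_X}(\cM,\cO_X^{\EE})$ is interchangeable with $\DRE$ up to duality and shift; $\R$-constructibility of the target follows from finiteness properties of enhanced ind-sheaves applied to holonomic coherent $\cD_X$-modules.

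The core work is full faithfulness. Via the tensor-hom adjunction this reduces to showing that
\[
\Hom_{\cD_X}(\cM,\cN) \To \Hom\bigl(\DRE(\cM),\DRE(\cN)\bigr)
\]
is an isomorphism. By devissage along a stratification one may work locally and then invoke the Kedlaya--Mochizuki theorem on good formal structures: after a proper birational morphism $\pi\colon \wt X \to X$ and a ramified cover, the pullback $\pi^*\cM$ decomposes locally as a direct sum of elementary modules $\cE_{X\setminus D|X}^{\varphi} \otimes \cR_\varphi$ along a normal crossing divisor, with $\cR_\varphi$ regular holonomic. Since $\DRE$ is compatible with proper pushforward and smooth pullback, it is enough to establish the isomorphism in this good elementary setting. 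For $\cE_{X\setminus D|X}^{\varphi}$, a direct computation identifies $\DRE$ with an enhanced half-plane object of the form $\CE{X} \otimes \C_{\{\real\varphi < c\}}$ (up to shift), so the Hom-spaces on both sides can be compared by tracking the sectors in which $\real\varphi \to -\infty$. The regular contribution is controlled by Kashiwara's theorem composed with the natural embedding $\DbCcC{X} \hookrightarrow \EbRcIC{X}$.

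The principal obstacle is the descent step: transferring full faithfulness from the good elementary model on $\wt X$ back to arbitrary holonomic D-modules on $X$. This requires precise compatibility of $\DRE$ with proper pushforward, inverse image, and duality, together with a careful gluing of the Stokes data encoded in the enhanced variable. It is exactly here that the enhanced ind-sheaf framework is indispensable: the extra $\R$-variable records the asymptotic growth data without which the gluing of local Hukuhara--Levelt--Turrittin decompositions into a global object is not functorial. Equivalently, Kedlaya--Mochizuki is the geometric engine, while checking that $\DRE$ interacts correctly with the six operations is the technical heart of the argument.
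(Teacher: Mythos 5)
The paper does not prove this statement: it is quoted verbatim from D'Agnolo--Kashiwara \cite[Theorem 9.5.3]{DK16} and used throughout as a black box, so there is no internal proof to compare yours against. What follows is therefore a comparison with the original argument of \emph{loc.~cit.}

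Your outline tracks the broad architecture of that proof (definition of $\DRE$ via $\cO^\EE_X$ and tempered holomorphic functions, d\'evissage via Kedlaya--Mochizuki to normal forms along a normal crossing divisor, explicit computation on exponential modules, the regular case via Kashiwara's theorem), but two points differ from, or fall short of, the actual argument. First, full faithfulness is not obtained in \cite{DK16} by directly comparing morphism spaces of elementary models; instead one proves a \emph{reconstruction} isomorphism $\cM(*D)\iso\RHomE_{\cD_X}\bigl(\SolE(\cM),\cO^\EE_X\bigr)$ for every holonomic $\cM$, from which full faithfulness follows purely formally by adjunction. This matters because the reconstruction statement is of the shape that the d\'evissage machinery can handle --- it is local, stable under shifts, cones and direct summands, and compatible with proper direct image --- whereas the assertion ``the map on morphism spaces is bijective'' involves two modules at once and is not obviously reducible to the normal-form situation in the same way. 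Second, the descent step that you explicitly flag as ``the principal obstacle'' is precisely where the content of the proof lies (the reduction lemma \cite[Lemma 7.3.7]{DK16} together with the compatibility of $\DRE$ with $\mathrm{D}f_*$ for projective $f$ and with ramified covers); as written, your proposal names this difficulty without resolving it, so as a standalone proof it has a genuine gap there, even though the overall strategy is the correct one.
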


From a general point of view, this result teaches us that constructible enhanced ind-sheaves should be our objects of interest in the study of holonomic $\cD$-modules with irregular singularities---just as we were interested in constructible sheaves as the topological counterparts of regular holonomic $\cD$-modules. One of the purposes of this paper is therefore to introduce objects similar to the ones known from the classical theory of constructible sheaves.

Classical objects in the study of differential equations with irregular singularities are the de Rham complexes with moderate growth and rapid decay. 
The reason why such complexes are important is the following: there are non-isomorphic $\cD$-modules whose (classical) de Rham complexes are isomorphic. Roughly speaking, this happens because their sheaves of holomorphic solutions are the same, even though the growth behavior of their solutions is different, but the classical de Rham functor is not sensitive to growth conditions. In order to obtain a functor which can distinguish such $\cD$-modules, it is therefore necessary to introduce variants of the de Rham functor that can ``measure'' the growth of the solutions to a differential system. A technical difficulty in these constructions is the need to work on \emph{real} blow-up spaces, where functions with moderate growth and rapid decay along the exceptional divisor are more easily studied. (Let us note that the use of the real blow-up can, however, partly be avoided by working with ind-sheaves.)

\bigskip

In the one-dimensional case, a solution to the irregular Riemann--Hilbert problem has been described earlier by introducing the notion of \emph{Stokes structure}, which can be formulated in several different ways, one of them being \emph{Stokes-filtered local systems} (see \cite{Mal91}, and we refer to \cite{Sab13} for an exposition on Stokes filtrations for $\cD$-modules, and to \cite{Boa21} for a history and comparison of different formulations of the Stokes phenomenon). Indeed, the Stokes filtration on the local system associated to a holonomic $\cD$-module $\cM$ is essentially given by the moderate growth de Rham complexes of ``exponentially twisted" versions of $\cM$.

\bigskip

The simple idea at the heart of this article is now the following:
Since $\DRE$ is fully faithful, all the information about a holonomic $\cD_X$-module must be encoded in its enhanced de Rham complex. In particular, there should be a functorial way to obtain the moderate growth and rapid decay de Rham complexes of a holonomic $\cD_X$-module $\cM$ from the ($\R$-constructible) enhanced ind-sheaf $\DRE(\cM)$, without leaving the topological setting. Our goal is to construct, for any holomorphic function $f\colon X \to \C$, functors $(-)^{\mathrm{mod} \,  f}$ and $(-)^{\mathrm{rd}\, f}$ that make the following diagram commutative: 
\begin{equation}\label{eq:diagramMainQuestion}
\begin{tikzcd}
\DbholD{X}\arrow{rr}{\mathrm{DR}^\mathrm{E}_X} \arrow{rd}[swap]{\mathrm{DR}^{\mathrm{mod}}_{\widetilde{X}_f}, \DRrdf} && \EbRcIC{X} \arrow{ld}{(-)^{\mathrm{mod}\, f}, (-)^{\mathrm{rd}\,f}}\\
&\DbRcC{\Xf}
\end{tikzcd}\tag{$\star$}
\end{equation}
where $\Xf$ denotes the real blow-up of $X$ along $f$. In particular, in order to do this, we will investigate two notions that do not seem to have been studied in the context of enhanced ind-sheaves yet: 
\begin{enumerate}
\item We develop the notion of the enhanced de Rham complex \emph{on the real blow-up along a holomorphic function $f$} (in \cite{DK16} and \cite{KS16}, only the real blow-up along a normal crossing divisor has been studied).

\item We study the notions of rapid decay functions and the rapid decay de Rham complexes using enhanced ind-sheaves. (In \cite{DK16} and \cite{KS16}, only the sheaf $\AmodD$ of holomorphic functions with moderate growth along a normal crossing divisor has been studied). 

\end{enumerate}
In order to achieve results about rapid decay objects on these differing notions of real blow-up, we will mimic constructions done in \cite{DK16} in the case of moderate growth along a normal crossing divisor, and additionally make a connection between moderate growth and rapid decay in Section \ref{sec:TWduality}. This is accomplished using a deep categorical duality of Kashiwara--Schapira between the related notions of tempered distributions and Whitney functions (see \cite[Theorem~6.1]{KS96} for the original duality statement; additionally, see Subsection 2.5.4 of the recent survey \cite{KS16} for a more expository treatment). 
In this way, we finally prove (in Corollary~\ref{cor:dualityIsoXfNCD}, Lemma~\ref{conj:natMorph}, and Proposition~\ref{prop:conjNotNCD}) the following duality conjectured by Claude Sabbah (see Theorem~\ref{thm:SabDuality}):

\begin{thm}[{cf.\ \cite[Conjecture 4.13]{Sab21}}]\label{conj:SabDuality}
Let $X$ be a complex manifold, $\cM$ a holonomic $\cD_X$-module, and $f\colon X \to \C$ a holomorphic function on $X$. Let $\Xf$ denote the real blow-up of $X$ along $f$. Then, there is a natural isomorphism in $\DbRcC{\Xf}$
$$
\DRrdf(\mathbb{D}_X\cM) \xlongrightarrow{\thicksim} \mathrm{D}_{\Xf}\DRmodf(\cM),
$$
where $\mathbb{D}_X$ denotes the $\cD_X$-module duality functor on $\DbholD{X}$, and $\mathrm{D}_{\Xf}$ denotes the Verdier duality functor on $\DbC{\Xf}$.
\end{thm}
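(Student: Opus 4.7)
The plan is to proceed in three steps, following the structure outlined in the introduction: construct a candidate natural morphism, prove it is an isomorphism when $f^{-1}(0)$ is a normal crossing divisor, and then extend to arbitrary $f$ by a dévissage through resolution of singularities.

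First, I would construct a natural morphism
$$\DRrdf(\mathbb{D}_X\cM) \To \mathrm{D}_{\Xf}\DRmodf(\cM)$$
in $\DbRcC{\Xf}$, functorial in $\cM \in \DbholD{X}$. The construction uses a canonical integration pairing between rapid decay and moderate growth forms: locally on $\Xf$, the wedge product of a rapid decay form against a moderate growth form is integrable, producing a pairing that factors through a dualizing complex on $\Xf$ and, after adjoining $\mathbb{D}_X$ on one side, yields the desired morphism (Lemma~\ref{conj:natMorph}).

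The heart of the argument, and the principal obstacle, is the case of a normal crossing divisor $D = f^{-1}(0)$. Here, the strategy is to reduce the claim to the Kashiwara--Schapira topological duality between Whitney $C^\infty$-functions $\Cinf{\Xf}$ and tempered distributions $\Dbt{\Xf}$ along $D$. The moderate growth de Rham complex admits a natural Dolbeault-type resolution by $\Cinf{\Xf}$-valued forms, while the rapid decay de Rham complex is resolved by $\Dbt{\Xf}$-valued forms; the underlying FN/DFN topological duality from \cite{KS96} then enhances to a duality of sheaf complexes on $\Xf$. The delicate point is verifying that the resulting isomorphism matches the morphism constructed in the previous step; this requires carefully tracking the compatibility between $\cD$-module duality $\mathbb{D}_X$, Verdier duality $\mathrm{D}_{\Xf}$, and the topological pairing, and is the content of Corollary~\ref{cor:dualityIsoXfNCD}.

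Finally, to treat arbitrary holomorphic $f$, I would invoke Hironaka's resolution of singularities, producing a proper modification $\pi\colon X' \to X$ which is an isomorphism over $X \setminus f^{-1}(0)$ and such that $g := f\circ\pi$ cuts out a normal crossing divisor on $X'$. This lifts to a proper map $\wt{\pi}\colon \wt{X'}_g \to \Xf$ of real blow-ups. Then, using the compatibility of the two growth de Rham functors with proper $\cD$-module pushforward, the compatibility of $\mathbb{D}_X$ with $\mathrm{D}\pi_*$, and the commutation of Verdier duality with the proper topological pushforward $\mathrm{R}\wt{\pi}_*$, the statement on $X$ reduces to the normal crossing case already established on $X'$. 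This dévissage yields Proposition~\ref{prop:conjNotNCD} and completes the proof.
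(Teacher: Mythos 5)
Your overall strategy---construct a canonical morphism, prove it is an isomorphism in the normal crossing case via the Kashiwara--Schapira duality between Whitney functions and tempered distributions, then d\'evisser through a resolution of singularities---is exactly the one the paper follows. However, there are three concrete points where your sketch, as written, does not yet close. First, you have the two sides of the Kashiwara--Schapira duality swapped: it is the \emph{moderate growth} complex that is computed from tempered distributions (via $\Amodf\iso\alpha\,\Ot{\Xf}$) and the \emph{rapid decay} complex that is computed from Whitney functions (via $\mathscr{C}^{\infty,\mathrm{rd}}=(\C_{X^*}\wtens\Cinf{\Xtot})|_{\XD}$), not the other way around. Second, the duality of \cite{KS96} is a statement on \emph{real analytic manifolds}, and neither $\Xf$ nor $\XD$ is one (they are manifolds with corners). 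The paper circumvents this by working on the blow-up $\XD$ along the components of $D$, embedding it as a closed subset of the total real blow-up $\Xtot$ (which is a genuine real analytic manifold), applying the pairing there, and restricting via $(\itot)^!$; the statement on $\Xf$ is then deduced by proper pushforward along $\XD\to\Xf$. Relatedly, the FN/DFN duality of (compactly supported) global sections only upgrades to a perfect pairing of complexes of sheaves once one side is known to be $\R$-constructible; this is supplied by the sheafification formula $\DRmodD(\cM)\iso\shbuD(\EE\Dj_*\EE j^{-1}\DRE(\cM))$ and must be invoked explicitly.

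Third, in the d\'evissage step: pushing the normal-crossing isomorphism forward along the resolution $\tau\colon Y\to X$ only yields the statement for $\mathrm{D}\tau_*\mathrm{D}\tau^*\cM$, which is \emph{not} $\cM$, since the modification is an isomorphism only away from $f^{-1}(0)$. One must first reduce to $\cM$ concentrated in a single degree and localize on $X$ so that a global resolution exists, and then use that $\cM$ is a direct summand of $\mathrm{D}\tau_*\mathrm{D}\tau^*\cM$, together with the functoriality of the morphism constructed in your first step, to transfer the isomorphism back to $\cM$. Without this direct-summand argument the claimed reduction to the normal crossing case on the resolution does not go through.
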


The key to its proof is the understanding of the case of a normal crossing divisor, which we treat first (see Proposition~\ref{prop:conjSabNCD} and Corollary~\ref{cor:dualityIsoXfNCD}) and which makes use of Kashiwara--Schapira's duality. The general case then follows using resolution of singularities, once a canonical morphism has been constructed (see Lemma~\ref{conj:natMorph} and Proposition~\ref{prop:conjNotNCD}).

In particular, this theorem will also make clear the connection between the above-mentioned duality of Kashiwara--Schapira and the now classical dualities coming from the period pairings of Bloch--Esnault \cite{BE04} and Hien \cite{Hie09}, on which we will elaborate at the end of Section~\ref{subsec:dualityBEH}.

The main result of this article, giving an answer to the question of diagram~\eqref{eq:diagramMainQuestion}, is summarized in the following theorem.
\begin{thm}
    To an object $K\in\EbIk{X}$, we functorially associate the objects of $\Dbk{\Xf}$
    \begin{align*}
    K^{\mathrm{mod}\, f} \vcentcolon= \shbuf(\EE \fj_* \EE j^{-1} K),\\
    K^{\mathrm{rd}\, f} \vcentcolon= \shbuf(\EE \fj_{!!} \EE j^{-1} K),
    \end{align*}
    where $j\colon (X\setminus f^{-1}(0))_\infty \hookrightarrow X$ and $j_f\colon (X\setminus f^{-1}(0))_\infty\hookrightarrow \Xf$ are the inclusions of the divisor's complement (in the sense of bordered spaces).
    
    The functors $(\bullet)^{\mathrm{mod}\, f}$ and $(\bullet)^{\mathrm{rd}\, f}$ preserve $\R$-constructiblity, and if $k=\C$ and $K=\DRE(\cM)$ for some $\cM\in\DbholD{X}$, then we have isomorphisms
    \begin{align*}
        K^{\mathrm{mod}\, f}\iso \DRmodf(\cM),\\
        K^{\mathrm{rd}\, f}\iso \DRrdf(\cM).
    \end{align*}
\end{thm}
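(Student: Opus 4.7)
The plan is to verify the three assertions of the theorem in turn: functoriality, preservation of $\R$-constructibility, and the comparison with the classical moderate growth and rapid decay de Rham complexes (that is, commutativity of diagram~\eqref{eq:diagramMainQuestion}).

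Functoriality of $(\bullet)^{\mathrm{mod}\,f}$ and $(\bullet)^{\mathrm{rd}\,f}$ is immediate from their definitions as composites of the functors $\EE j^{-1}$, $\EE\fj_*$, $\EE\fj_{!!}$, and $\shbuf$. Preservation of $\R$-constructibility rests on three stability properties: $\EE j^{-1}$ preserves $\R$-constructibility as a pullback along an open embedding of bordered spaces; $\EE\fj_*$ and $\EE\fj_{!!}$ preserve $\R$-constructibility as direct images along the inclusion $\fj$ of $(X\setminus f^{-1}(0))_\infty$ into $\Xf$; and the sheafification functor $\shbuf$ sends $\R$-constructible enhanced ind-sheaves on $\Xf$ to $\R$-constructible sheaves on $\Xf$, the analogue on $\Xf$ of the well-known property of $\shM$ on a real analytic manifold $M$ from \cite{DK16,KS16}.

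For the moderate growth comparison when $K=\DRE(\cM)$, the strategy is to first treat the case in which $D=f^{-1}(0)$ is a normal crossing divisor, and then to reduce the general case to this one by resolution of singularities, mirroring the devissage that enters into the proof of Theorem~\ref{conj:SabDuality}. In the normal crossing case, $\Xf$ is closely related to $\XD$ via a natural map, and the identification $\shbuf(\EE\fj_*\EE j^{-1}\DRE(\cM)) \iso \DRmodf(\cM)$ reduces essentially to an explicit compatibility between $\shbuf$ and the sheaf $\Amodf$ of holomorphic functions on $X\setminus f^{-1}(0)$ with moderate growth along the exceptional divisor of $\Xf$. This compatibility is the counterpart on $\Xf$ of the known formula relating $\shbuD$ and $\AmodD$ on $\XD$ from \cite{DK16,KS16}. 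For general $f$, one chooses a log resolution $\pi\colon X'\to X$ of $f^{-1}(0)$ and uses the commutativity of $\DRE$ with proper pushforwards, together with the NCD case on $X'$, to descend the statement back to $X$.

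For the rapid decay comparison, the key input is Theorem~\ref{conj:SabDuality}, which identifies $\DRrdf(\mathbb{D}_X\cM)$ with $\mathrm{D}_{\Xf}\DRmodf(\cM)$. On the enhanced ind-sheaf side, there is a corresponding duality $\EE\fj_{!!} \iso \mathrm{D}^\EE \circ \EE\fj_* \circ \mathrm{D}^\EE$, and $\DRE$ intertwines $\mathbb{D}_X$ with the enhanced duality $\mathrm{D}^\EE$ up to the standard compatibilities of \cite{DK16}. Combining these duality compatibilities with the moderate growth comparison applied to $\mathbb{D}_X\cM$ yields the rapid decay isomorphism.

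The main obstacle is expected to be the NCD-case identification $\shbuf(\EE\fj_*\EE j^{-1}\DRE(\cM)) \iso \DRmodf(\cM)$ itself: this requires a careful analysis of how $\shbuf$ interacts with the moderate growth coefficient sheaf $\Amodf$ on the blow-up $\Xf$ along a single holomorphic function, which is coarser than the blow-up $\XD$ along the individual components of $D$ when $D$ is reducible. Once this NCD compatibility is established, both the devissage for arbitrary $f$ and the passage to rapid decay via Sabbah's duality are largely formal.
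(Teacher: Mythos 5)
Your overall architecture matches the paper's: constructibility follows from the stability of $\EE j^{-1}$, $\EE\fj_*$, $\EE\fj_{!!}$ and $\shbuf$ under $\R$-constructibility (Lemma~\ref{lemma:constructible}); the moderate growth comparison is established first for a normal crossing divisor via the compatibility of sheafification with the moderate growth structure sheaf (Lemma~\ref{lemmaShDR} and Proposition~\ref{prop:DRmod}, transferred from $\XD$ to $\Xf$ by pushing forward along the proper map $\varpi_{D,f}$); and the rapid decay comparison is obtained by dualizing the moderate growth isomorphism for $\mathbb{D}_X\cM$, using Theorem~\ref{conj:SabDuality} together with the compatibilities of $\mathrm{D}^\EE$ with $\DRE$ and $\shbuf$ and the interchange of $\EE\fj_*$ with $\EE\fj_{!!}$ (Corollary~\ref{cor:DRrdXD}).

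The one place you genuinely diverge is the passage from the normal crossing case to arbitrary $f$ for the moderate growth isomorphism, and there your sketch has a gap. You propose to take a log resolution $\tau\colon Y\to X$ and descend the statement by proper pushforward; but $\cM$ is not itself a pushforward from $Y$ --- it is only a direct summand of $\mathrm{D}\tau_*\mathrm{D}\tau^*\cM$ --- and to exploit that you must first have a canonical morphism between $\DRmodf(\cM)$ and $\shbuf(\EE \fj_*\EE j^{-1}\DRE(\cM))$, functorial in $\cM$, so that proving it is an isomorphism for $\mathrm{D}\tau_*\mathrm{D}\tau^*\cM$ implies it for the summand. Constructing such a morphism for general $f$ is not formal; it is precisely the analogue of what Lemma~\ref{conj:natMorph} has to do by hand in the rapid decay setting. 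The paper sidesteps this entirely: Lemma-Definition~\ref{lemmadefi:OtXf} defines $\Ot{\Xf}$ and hence $\cO^\EE_{\Xf}$ directly on $\Xf$, using resolution of singularities only to endow $\Ot{\Xf}$ with a canonical $\Amodf$-module structure (and checking independence of the chosen modification), after which Proposition~\ref{prop:DRmodf} repeats the normal crossing computation verbatim on $\Xf$. Your route can be made to work, but you must supply the comparison morphism before the direct-summand argument applies; as written, the descent step does not go through.
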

The constructibility result is immediate (see Lemma~\ref{lemma:constructible}). On the other hand, as mentioned above, the idea of proof for the rest of the remaining statements goes as follows: We first consider the case of a normal crossing divisor. The result about moderate growth is then rather directly deduced from constructions similar to those in \cite{DK16} (see Proposition~\ref{prop:DRmod}), while the rapid decay case follows from the moderate growth case and Theorem~\ref{conj:SabDuality} (see Proposition~\ref{prop:conjSabNCD}). In addition, in both cases, some work is necessary to deduce the general case from that of a normal crossing divisor (see Propositions~\ref{prop:DRmodf} and \ref{prop:conjNotNCD}).

\subsection*{Outline of the paper}

Section~\ref{sec:review} is a brief review of the languages of sheaves, ind-sheaves, enhanced ind-sheaves and $\cD$-modules, as well as the theory of real oriented blow-up spaces, all of which we will use in this paper. We contextualize all of this background language in Section~\ref{sec:motivation}, where we give an overview of how these objects interact in dimension one via the Riemann--Hilbert correspondence. Afterward, we introduce and motivate the duality results later proved in Section~\ref{sec:duality} with some concrete computations on exponential $\cD$-modules. 

In Section~\ref{sec:DRmodandrd}, we motivate our later definition of moderate growth and rapid decay objects associated to enhanced ind-sheaves: First, we show how the moderate growth de Rham complex of a holonomic $\cD_X$-module $\cM$ along a divisor can be recovered in a functorial way from its enhanced de Rham complex $\DRE(\cM)$ (Proposition \ref{prop:DRmod}). This is greatly inspired by the formulas developed in \cite{DK20cycles}, which are briefly reviewed at the beginning of the section. The proof in the higher-dimensional case works in two steps: We prove the result in the case of a simple normal crossing divisor, and then use results about resolution of singularities to prove a statement in the general case. We then motivate a similar functorial construction for the rapid decay de Rham complex of $\cM$, which will be proved in the case of normal crossing divisor in Section~\ref{sec:TWduality}. 

In Section~\ref{sec:enhmodrd}, we define functors that associate to an enhanced ind-sheaf moderate growth and rapid decay objects in the (classical) category of sheaves on the real blow-up. These naturally lead to definitions for moderate growth and rapid decay nearby cycles in the category of sheaves on the boundary of the real blow-up space. These definitions will be motivated by the statements in the preceding section, so that these objects will, in particular,recover moderate growth de Rham complexes of holonomic $\cD$-modules. We also prove some elementary properties of these objects, similar to those known from classical nearby cycles (see \cite{KS90}) and moderate growth and rapid decay objects associated to holonomic $\cD$-modules (cf.\ Section 6 of \cite{Sab21}).

Restricting ourselves to $\R$-constructible enhanced ind-sheaves, the above objects with growth conditions are trivially seen to be related by Verdier duality. As a consequence, there is a natural duality pairing between such objects in the derived category $\DbRcC{{\Xf}}$ (see Section~\ref{sec:duality}). This observation forms the basis of our proof of Sabbah's Conjecture \ref{conj:SabDuality}.

In Section~\ref{sec:TWduality}, after reviewing a duality result of \cite{KS96} and \cite{KS16} between tempered distributions and Whitney functions, we first prove Conjecture~\ref{conj:SabDuality} in the case of a simple normal crossing divisor (Proposition~\ref{prop:conjSabNCD}, Corollary~\ref{cor:dualityIsoXfNCD}) and show that duality interchanges the moderate growth de Rham complex with rapid decay de Rham complex for holonomic $\cD$-modules. Afterward, we are able to use a standard d\'evissage argument via resolution of singularities to reduce the general case to that of a simple normal crossing divisor. 

This result has two interesting implications: First, it shows that the rapid decay object defined in Section~\ref{sec:enhmodrd} does indeed recover the rapid decay de Rham complex of a holonomic $\cD$-module (Corollary~\ref{cor:DRrdXD}). Second, our result recovers the natural pairing between rapid decay homology and algebraic de Rham cohomology due to Bloch--Esnault \cite{BE04} and M.\ Hien \cite{Hie09} (see Proposition~\ref{cor:ModRdpairing1}).

\section{Background and notation}\label{sec:review}

\subsection{From sheaves to enhanced ind-sheaves}
In this section, we will recall some basic notation in the context of sheaf theory and its generalizations. Let $X$ be a topological space (all topological spaces will be assumed to be \emph{good}, i.e., Hausdorff, locally compact,
second countable and having finite flabby dimension). Let $k$ be a field. 

\subsubsection*{Sheaves} We denote by $\Mod{k_X}$ the category of sheaves of $k$-vector spaces on $X$, and by $\Dbk{X}$ its bounded derived category. One has the six Grothendieck operations $\RR\cH om, \otimes, \RR f_*, f^{-1}, \RR f_!, f^!$ (for $f$ a morphism of good topological spaces) on $\Dbk{X}$. We denote the (Verdier) duality functor by $\mathrm{D}_{X}=\RR\cH om_{k_X}(\bullet,\omega_X)$, where $\omega_X$ is the (Verdier) dualizing complex. Recall that $\omega_X \vcentcolon= a_X^!k$, where $a_X : X \to \{pt\}$ is the natural map. For a locally closed set $Z\subseteq X$, we will denote by $k_Z\in\Mod{k_X}$ the constant sheaf with stalk $k$ on $Z$, extended by zero outside $Z$. We refer to the standard literature, e.g., Chapters 2 and 3 of \cite{KS90}, for details on sheaf theory.

\subsubsection*{Ind-sheaves} We denote by $\mathrm{I}(k_X)$ the category of ind-sheaves over $k$ on $X$, which has been constructed in \cite{KS01} as the category of ind-objects for the category of compactly supported sheaves of $k$-vector spaces. The inductive limit in $\mathrm{I}(k_X)$ will be denoted by $\indlim{}$. We denote by $\DbIk{X}$ the bounded derived category of $\mathrm{I}(k_X)$. One has the six Grothendieck operations $\RIhom, \otimes, \RR f_*, f^{-1}, \RR f_{!!}, f^!$ on it. 

\begin{rem}\label{rem:indAdjunctions}
Moreover, one has a fully faithful embedding $\iota_X\colon \Mod{k_X}\to\mathrm{I}(k_X)$. It has a left adjoint $\alpha_X$, which itself has a fully faithful left adjoint $\beta_X$. Let us note that the three functors $\iota_X$, $\alpha_X$ and $\beta_X$ are exact. Let us also note that the natural inclusion $\iota_X$ is mostly suppressed in the notation (starting with \cite{KS01}) and that also the functor $\beta_X$ is often suppressed in the more recent notational conventions of \cite{DK16} and \cite{KS16} (and works thereafter). We refer to \cite{KS01} for details on the theory of ind-sheaves.
\end{rem}

\subsubsection*{Enhanced ind-sheaves} The category of enhanced ind-sheaves has been constructed in \cite{DK16} in order to establish a Riemann--Hilbert correspondence for holonomic $\cD$-modules. 

To this end, the authors of loc.~cit.\ introduced the notion of a \emph{bordered space}, which is a pair $(X, \widehat{X})$ of good topological spaces such that $X\subseteq \widehat{X}$ is an open subspace. A morphism of bordered spaces $(X,\widehat{X})\to (Y,\widehat{Y})$ is a continuous map $X\to Y$ such that, denoting by $\overline{\Gamma}$ the closure of its graph in $\widehat{X}\times\widehat{Y}$, the map $\overline{\Gamma}\to \widehat{X}$ induced by the projection to the first factor is proper. (Note that this condition is in particular satisfied if $\widehat{Y}$ is compact.) It follows that the category of good topological spaces is naturally a subcategory of the category of bordered spaces by considering a topological space $X$ as the pair $(X,X)$.

Let $\cX=(X,\widehat{X})$ be a bordered space, and consider also the bordered space $\R_\infty\defeq(\R,\mathsf{P})$, where $\mathsf{P}\defeq \PP^1(\R)$ is the real projective line. One then defines the quotient categories
$$\mathrm{D}^\mathrm{b}(\mathrm{I}k_{\cX\times\R_\infty})\defeq \mathrm{D}^\mathrm{b}(\mathrm{I}k_{\widehat{X}\times\mathsf{P}})/\mathrm{D}^\mathrm{b}(\mathrm{I}k_{(\widehat{X}\times\mathsf{P})\setminus(X\times\R)})$$
and
$$\EbIk{\cX}\defeq \mathrm{D}^\mathrm{b}(\mathrm{I}k_{\cX\times\R_\infty})/\pi^{-1}\DbIk{\cX},$$
where $\pi=\pi_{\cX}\colon \cX\times\R_\infty\to\cX$ denotes the projection. We refer to \cite{DK16} for details on this construction and to \cite{KS06} for an exposition on quotients of triangulated categories. One calls $\EbIk{\cX}$ the category of \emph{enhanced ind-sheaves} over $k$ on $\cX$. Throughout this paper, we will assume $\cX$ is of the form $(X,X)$ for some good space $X$ and just write $\cX = X$ unless otherwise indicated.

It admits the six Grothendieck operations $\RIhom^+, \conv, \EE f_*, \EE f^{-1}, \EE f_{!!}$ and $\EE f^!$ for a morphism of bordered spaces $f$. Moreover, we denote by $\mathrm{D}^\EE_{X}$ the duality functor for enhanced ind-sheaves.

One of the most important objects in the category $\EbIk{X}$ is the \emph{enhanced constant sheaf}
$$k^\EE_X\defeq \indlim{a\to\infty} k_{\{t\geq a\}},$$
where we abbreviate $\{t\geq a\}\defeq \{(x,t)\in\widehat{X}\times\mathsf{P}; x\in X, t\in\R, t\geq a\}$.

For a bordered space $\cX=(X,\widehat{X})$, there is a fully faithful embedding
\begin{align*}
    e_\cX\colon \Dbk{X}&\hookrightarrow \EbIk{\cX},\\
    F&\mapsto k^\EE_\cX \otimes \pi^{-1}F.
\end{align*}

In \cite{DK21sh}, the authors introduced the \emph{sheafification functor} $\sh{X}\colon \EbIk{X}\to \Dbk{X}$ for enhanced ind-sheaves on $X$. In the case of (in particular for $\R$-constructible, see below) enhanced ind-sheaves, it is given by
$$\sh{\cX}(K)\defeq \RR\cH om^\EE(k^\EE_{\cX},K)\in\Dbk{X}.$$
(For a definition of the functor $\sHom^\EE$, we refer to \cite[Definition 4.5.13]{DK16}, where it was denoted by $\cH om^\EE$. See also \cite{DK21sh} for a detailed study of the sheafification functor.)
Additionally, there is a natural isomorphism $\sh{X}\circ e_X\iso\id_{\Dbk{X}}$, i.e., $\sh{\cX}$ is a left quasi-inverse of $e_{X}$.

\subsubsection*{Constructibility}
In classical sheaf theory, different constructibility conditions for sheaves of vector spaces have been studied (see, e.g., Chapter 8 of \cite{KS90}).
In particular, if $X$ is a real analytic manifold (or more generally, a \emph{subanalytic space}, see, e.g., \cite[Exercise 9.2]{KS90}), there are  thick subcategories $\ModRck{X}\subset\Mod{k_X}$ of $\R$-constructible sheaves and $\DbRck{X}\subset \Dbk{X}$ of complexes with $\R$-constructible cohomologies.

If $X$ is a complex manifold, a stronger notion is that of a $\C$-constructible sheaf, and one has thick subcategories $\ModCck{X}\subset \mathrm{Mod}(k_{X})$ and $\DbCck{X}\subset \Dbk{X}$.

For enhanced ind-sheaves on a real analytic bordered space $\cX=(X,\widehat{X})$ (meaning that $X$ and $\widehat{X}$ are real analytic manifolds), there is an analogous thick subcategory $\EbRcIk{\cX}\subset \EbIk{\cX}$ of $\R$-constructible enhanced ind-sheaves (see \cite[§4.9]{DK16} for details).

A notion of $\C$-constructibility has been studied in \cite{Ito20} (see also \cite{Kuw21} and \cite{Moc22a,Moc22b} for different approaches to describe the essential image of the Riemann--Hilbert functor of \cite{DK16}).

\subsection{$\cD$-modules}
When $X$ is a complex manifold, we will denote by $\cD_X$ the sheaf of linear partial differential operators with holomorphic coefficients on $X$. We refer to the standard literature, such as \cite{Kas03, HTT08}, for the theory of $\cD$-modules (on complex manifolds as well as on smooth algebraic varieties). For a morphism of complex manifolds $f\colon X\to Y$, the direct and inverse image operations for $\cD$-modules will be denoted by $\mathrm{D}f_*$ and $\mathrm{D}f^*$, respectively.

Let $X$ be a complex manifold.
We denote by $\mathrm{Mod}_{\mathrm{hol}}(\cD_X)$ the category of holonomic $\cD_X$-modules and by $\DbholD{X}$ the subcategory of the derived category of $\cD_X$-modules consisting of complexes with holonomic cohomologies. 

The \emph{(classical) de Rham functor} is given by
\begin{align*}
 \DR\colon \DbholD{X}&\longrightarrow \DbC{X}, \\ \cM &\longmapsto \Omega_{X}\Ltens_{\cD_X} \cM,
\end{align*}
where $\Omega_X \vcentcolon= \Omega_X^{d_X}$ is the invertible sheaf of top-degree holomorphic differential forms on $X$, where $d_X \vcentcolon= \dim_\C X$. $\Omega_X$ is also a right $\cD_X$-module, and is used to define the duality functor for holonomic left $\cD_X$-modules $\cM \in \DbholD{X}$:
\begin{align*}
\mathbb{D}_X\cM \vcentcolon= \sHom_{\cD_X}(\cM,\cD_X) \otimes_{\cO_X} \Omega_X^{\otimes-1}[d_X],
\end{align*}
which is another holonomic left $\cD_X$-module.

In \cite{DK16}, D'Agnolo--Kashiwara define the \emph{enhanced de Rham functor} 
\begin{align*}
 \DRE\colon \DbholD{X}&\longrightarrow \EbIC{X}, \\ \cM &\longmapsto \Omega_{X}^\EE \Ltens_{\cD_X} \cM 
\end{align*}
where $\Omega_{X}^\EE \vcentcolon=\Omega_X\Ltens_{\cO_X} \cO^\EE_X$, and $\cO^\EE_X$ is the enhanced ind-sheaf of tempered holomorphic functions. 

\begin{thm}[{cf.\ \cite[Theorem 9.5.3]{DK16}}]
The functor $\DRE$ is fully faithful.
\end{thm}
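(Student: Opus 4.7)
The plan is to show that, for all $\cM,\cN\in\DbholD{X}$, the canonical morphism
\[
\Hom_{\cD_X}(\cM,\cN)\longrightarrow \Hom_{\EbIC{X}}(\DRE(\cM),\DRE(\cN))
\]
is an isomorphism. My first step would be to reformulate fully faithfulness as a reconstruction statement. Using $\cD$-module duality together with the identification $\Hom_{\cD_X}(\cM,\cN)\iso\Hom_{\cD_X}(\cO_X,\sHom_{\cD_X}(\cM,\cN))$ and a compatible enhanced version on the target side, the claim becomes equivalent to the assertion that $\cM$ can be functorially recovered from $\DRE(\cM)$, i.e., that the natural morphism $\cM\to\mathcal{G}(\DRE(\cM))$ is an isomorphism for every holonomic $\cM$, where $\mathcal{G}$ is a tempered solution type functor built from $\cO^\EE_X$. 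This places the proof on the classical footing used by Kashiwara in the regular case.

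Next, I would proceed by d\'evissage. Using distinguished triangles associated to stratifications of the support of $\cM$, Kashiwara's theorem for D-modules supported on submanifolds, compatibility of $\DRE$ with proper direct images, and induction on $\dim_\C X$, the problem reduces to the case of a single holonomic module on a small polydisc. The regular holonomic case is then handled by the observation that $\DRE$ essentially factors through the fully faithful embedding $e_X\colon\Dbk{X}\hookrightarrow\EbIk{X}$ tensored with the enhanced constant sheaf, so Kashiwara's classical Riemann--Hilbert theorem suffices there.

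The core of the argument concerns irregular modules, and here I would invoke the higher-dimensional Hukuhara--Levelt--Turrittin theorem of Mochizuki and Kedlaya: after a suitable sequence of blow-ups $\pi\colon\widetilde X\to X$, the pullback $\mathrm{D}\pi^*\cM$ admits a good formal decomposition along a normal crossing divisor $D$ with summands of the shape $\cE^\varphi_{\widetilde X\setminus D\mid\widetilde X}\otimes\mathcal{R}_\varphi$, where $\mathcal{R}_\varphi$ is regular holonomic and $\varphi$ is a good meromorphic exponent. Since $\DRE$ commutes with proper pushforward, one is reduced to checking the reconstruction on each such elementary piece; for these, a direct calculation identifies $\DRE(\cE^\varphi\otimes\mathcal{R}_\varphi)$ with $\indlim{a\to\infty}\C_{\{t+\real\varphi\geq a\}}$ tensored with a regular de Rham complex, so that the exponent $\varphi$ is faithfully recorded in the shape of the support region in the extra $t$-direction of $\R_\infty$.

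The main obstacle will be controlling the Stokes phenomenon, namely ruling out the possibility that distinct analytic lifts of the same formal decomposition yield isomorphic enhanced de Rham complexes. Two external inputs are crucial here: the Malgrange--Sibuya asymptotic existence theorem, which provides honest analytic lifts of the good formal decomposition on sectorial neighborhoods of $D$, and the specific design of the sheaf $\cO^\EE_X$ of tempered holomorphic functions, whose auxiliary parameter $t$ is precisely calibrated to detect the moderate-growth thresholds that jump across Stokes directions. In dimension one, these ingredients repackage into a Stokes filtration and the argument is cleaner; in higher dimensions, their combination with the reduction to normal crossings above is exactly what is needed to close the d\'evissage and obtain the isomorphism on $\Hom$-spaces.
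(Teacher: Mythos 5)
You should first be aware that the paper does not prove this statement at all: it is quoted verbatim from D'Agnolo--Kashiwara \cite[Theorem 9.5.3]{DK16} and used as a black box, so there is no internal proof to compare against. Measured against the actual argument of loc.\ cit., your outline is a faithful map of the proof: full faithfulness is indeed deduced from a reconstruction theorem (in \cite{DK16} one shows that the canonical morphism $\cM\to\RHomE(\SolE(\cM),\cO^\EE_X)$ is an isomorphism, which is your functor $\mathcal{G}$); the reduction proceeds by d\'evissage on support, dimension and amplitude to a meromorphic connection with good formal structure along a normal crossing divisor on a polydisc, using Mochizuki--Kedlaya (\cite[Lemma 7.3.7]{DK16}); the regular case reduces to Kashiwara's theorem via $e_X$; and the irregular case rests on explicit computations with exponential enhanced ind-sheaves combined with the multidimensional asymptotic existence theorem (normal versus quasi-normal forms). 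So the route you describe is the route actually taken.

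It is, however, a map rather than a proof, and the two steps you treat as routine are precisely where the content lies. First, ``since $\DRE$ commutes with proper pushforward, one is reduced to each elementary piece'' is too quick: the modification $\pi$ is not an isomorphism, so $\cM$ is recovered only as a direct summand of $\mathrm{D}\pi_*\mathrm{D}\pi^*\cM$, and ramified coverings must also be handled to remove Puiseux exponents; this is exactly what \cite[Lemma 7.3.7]{DK16} packages, and the same direct-summand device reappears in this paper in the proof of Proposition~\ref{prop:conjNotNCD}. Second, the claim that ``the exponent $\varphi$ is faithfully recorded in the support region in the $t$-direction'' is the \emph{conclusion} of, not a substitute for, the computation of objects of the type $\RHomE\big(\mathbb{E}^{\mathrm{Re}\,\varphi}_{U|X},\cO^\EE_X\big)$ on the real blow-up (\cite[\S7.2, \S9.6]{DK16}), which requires the moderate-growth estimates for tempered holomorphic functions and is nowhere carried out in your sketch; likewise, passing from the reconstruction isomorphism to an isomorphism on $\mathrm{Hom}$-spaces needs the (short but nontrivial) verification that the induced map is inverse to the canonical one. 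None of these are wrong turns --- they are the proof itself, which your outline correctly locates but does not supply.
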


The functors $e_X$ and $\sh{X}$ make a connection between the classical and the enhanced de Rham functor: For a regular holonomic $\cD_X$-module $\mathcal{R}\in\mathrm{D}^\mathrm{b}_{\mathrm{rh}}(\cD_X)$, one has 
$$\DRE(\cM)\iso e_X\DR(\cM).$$
On the other hand, for any holonomic $\cD_X$-module $\cM\in\DbholD{X}$, one has
$$\sh{X}\DRE(\cM)\iso \DR(\cM).$$

\subsection{Real blow-up spaces}\label{subsec:realblowup} In the following, we will recall two (in general different) constructions of a real oriented blow-up space (often simply called the \emph{real blow-up}) associated to a complex manifold and a divisor. We will also recall some important sheaves on these blow-up spaces. For more details, we refer to Section 2 of \cite{Sab21}, Subsection 4.1.5 of \cite{Moc14}, Subsection 7.1 of \cite{DK16}, or Subsection 4.2 of \cite{KS16}.

\subsubsection*{Real blow-up along a function}
Let $X$ be a complex manifold and $f\colon X\to \C$ a holomorphic function. Then the \emph{real blow-up of $X$ along $f$} is denoted by $\varpi_f\colon \Xf \to X$ and defined as follows: Consider the map $f/|f|\colon X \setminus f^{-1}(0) \to S^1$, then $\Xf$ is the closure of its graph in $X\times S^1$. The map $\varpi_f\colon \Xf\to X$ is induced by the projection to the first factor. It is a homeomorphism on $X^*\vcentcolon= X\setminus f^{-1}(0)\iso \Xf\setminus\partial\Xf$. Moreover, we have $\partial\Xf\iso f^{-1}(0)\times S^1$.

We denote by $X^*_\infty$ the bordered space $(X^*,X)\iso(X^*,\Xf)$, and we fix the following notation for the morphisms (all of them inclusions except for $\varpi_f$) that will appear throughout the paper:
\begin{equation}\label{eq:morphXf}
\begin{tikzcd}\partial\Xf\arrow[hook, r, "\tilde i_f"] & \Xf\arrow[bend left=40,rr,"\varpi_f"]&X^*_\infty\arrow[l,"\fj",swap,hook']\arrow[r,hook,"j"] & X\end{tikzcd}
\end{equation}

This construction is functorial in the following sense: Given two complex manifolds $X$ and $Y$ with holomorphic functions $f\colon X\to \C$ and $g\colon Y\to \C$, as well as a morphism of complex manifolds $\tau \colon X\to Y$ such that $g\circ\tau=f$, then there is an induced morphism $\widetilde{\tau}\colon \Xf \to \widetilde{Y}_g$ such that the following diagram commutes:
$$\begin{tikzcd}
\Xf\arrow{dd}[swap]{\varpi_f}\arrow{rr}{\widetilde{\tau}}&& \widetilde{Y}_g\arrow{dd}{\varpi_g}\\ \\
X\arrow{rr}{\tau}&& Y
\end{tikzcd}$$

In particular, for $X=\C$ and $f(z)=z$, this construction gives the real blow-up space $\wt{\C}_0=\R_{\geq 0}\times S^1$ with $\varpi_0\colon \wt{\C}_0\to \C$ given by $\varpi_0(\rho,e^{i\theta})=\rho e^{i\theta}$.

For arbitrary $X$ and holomorphic $f\colon X\to \C$, one can then alternatively describe the construction of $\Xf$ as the fiber product $\Xf\defeq X\times_\C \wt{C}_0$, i.e., the space fitting into the Cartesian diagram
$$\begin{tikzcd}
\Xf \arrow{rr}{} \arrow{dd}[swap]{\varpi_f} \ar[phantom, ddrr, "\square"] && \wt{\C}_0 \arrow{dd}{\varpi_0}\\ \\
X\arrow{rr}[swap]{f} && \C
\end{tikzcd}$$

\subsubsection*{Real blow-up along a simple normal crossing divisor}
Let $X$ be a complex manifold and $D\subset X$ a normal crossing divisor with smooth components. (Such a normal crossing divisor is often said to be \emph{simple} or \emph{strict}, but we will always mean such a divisor even when we just say ``normal crossing divisor''.)
Locally, one can write $D=\{z_1\cdot\ldots\cdot z_r=0\}=D_1\cup\ldots\cup D_r$ for an appropriate coordinate system $z_1,\ldots,z_n$ and $r\leq n$, where we write $D_j \defeq\{z_j=0\}$.
Then, setting $f_j(z_1,\ldots,z_n)=z_j$, we can define the real blow-up spaces $\varpi_{f_j}\colon \widetilde{X}_{f_j}\to X$ as above. (This amounts to replacing the coordinates $z_1,\ldots,z_r$ by polar coordinates $\rho_1,e^{i\theta_1},\ldots,\rho_r,e^{i\theta_r}$, where we allow $\rho_j=0$.) The \emph{real blow-up space of $X$ along (the components of) $D$} is then denoted by $\varpi_D\colon \XD\to X$ and defined as the fiber product $\wt{X}_{f_1}\times_X\ldots\times_X\wt{X}_{f_r}$. This map is a homeomorphism on $X^*\vcentcolon=X\setminus D\iso \XD\setminus \partial \XD$.

We fix the following notation for the morphisms (all of them inclusions except for $\varpi_D$) that will appear throughout the paper:
\begin{equation}\label{eq:morphXD}
\begin{tikzcd}\partial \XD \arrow[hook, r, "\tilde i_D"] & \XD\arrow[bend left=35,rr,"\varpi_D"]&(X\setminus D)_\infty\arrow[l,"\;\Dj",swap,hook']\arrow[r,hook,"j"] & X\end{tikzcd}
\end{equation}

In the context of a normal crossing divisor, one can also define the \emph{total real blow-up} $\Xtot$ (which amounts to allowing $\rho_i\in\R$ in the construction above). It contains the real blow-up $\XD$ as a closed subset, but---contrarily to the latter---is a real analytic manifold, which will be useful in some situations. Note, however, that it is not globally intrinsically defined (see \cite[Remark 7.1.1]{DK16} and \cite[Remark 4.2.1]{KS16}). We will require $\Xtot$ later on in Subsection~\ref{subsec:SabbahProof}.

\subsubsection*{Comparison between the two constructions}
While the second definition requires the divisor to have normal crossings and smooth components, the first definition works in a more general setting. In the case of a normal crossing divisor given as the zero set of a holomorphic function, we therefore have two different notions of real blow-up spaces: Let $D$ be a normal crossing divisor given by $D=f^{-1}(0)$ for some holomorphic function $f\colon X\to \C$, then we can define the spaces $\XD$ and $\Xf$, and there is a natural proper morphism $\varpi_{D,f}\colon \XD\to \Xf$ and a commutative diagram
$$\begin{tikzcd}
\XD\arrow{rr}{\varpi_{D,f}}\arrow{dr}[swap]{\varpi_D}&&\Xf\arrow{ld}{\varpi_f}\\
&X
\end{tikzcd}$$

It is easy to see that in the example $X=\C$, with divisor $D= \{0\}$ with defining function $f(z)=z$, the two constructions of real blow-ups $\Xf$ and $\XD$ coincide. We explore this special case in more detail below in Subsection~\ref{subsec:dim1case}.

\subsubsection*{Sheaves of holomorphic functions with moderate growth and rapid decay}
Let $\varpi\colon \widetilde{X}\to X$ denote any of the real blow-up spaces defined above. Then one has the sheaves $\Amod$ and $\Ard$ of functions on $\widetilde{X}$ which are holomorphic in the interior $X^*$ and have moderate growth and rapid decay at the boundary $\partial\widetilde{X}$, respectively.

For details on these notions, see, e.g., \cite[§8.3]{Sab13}, \cite[§4.1]{Moc14}, \cite[§7.2]{KS01}, \cite[§5.1]{DK16}. Note in particular that $\Amod$ and $\Ard$ are flat over $\varpi^{-1}\cO_X$ (see, e.g., \cite[Theorem~4.1.1, Theorem~4.1.5]{Moc14}).

\begin{rem}
It is worth noting that the definitions of moderate (or polynomial) growth are phrased slightly differently in different works such as \cite{KS01}, \cite{Sab13} and \cite{Moc14}. However, they all lead to the same sheaf of holomorphic functions with moderate growth at the boundary.

Our notation $\Amod$ is closest to that in \cite{Sab13} and \cite{Moc14}. This sheaf is denoted by $\cA_{\widetilde{X}}$ in \cite{DK16}. however, it is not the same as the sheaf $\cA_{\widetilde{X}}$ in \cite{Sab00}, and also should not be confused with the sheaf $\mathscr{A}_{\widetilde{X}}$ in \cite{Sab13} or the sheaf $\cA$ in \cite{Mal91}.

In fact, it is not completely obvious from the beginning that the sheaf $\Amod$, as defined above, is the same as the sheaf $\cA_{\widetilde{X}}$ defined in \cite[Notation 7.2.1]{DK16}: In loc.~cit., it is defined as the sheaf of functions that are holomorphic in the interior of $\widetilde{X}$ and tempered at the boundary $\partial \widetilde{X}$. This is a priori a stronger condition than the one imposed above (and in \cite{Sab13}, \cite{Moc14}, for example), since \emph{tempered} means that the function \emph{and all its derivatives} are of moderate growth. For holomorphic functions with moderate growth, this is, however, automatic due to Cauchy's integral formula for derivatives, as is shown in \cite[Lemma 3]{Siu70}.
\end{rem}

Since differentiation of holomorphic functions preserves the moderate growth condition, the sheaf $\varpi^{-1}\cD_X$ naturally acts on $\Amod$. This allows us to define a sheaf of differential operators on real blow-up spaces, by setting
\begin{align*}
    \cD_{\wt X}^\cA \vcentcolon = \varpi^{-1}\cD_X \otimes_{\varpi^{-1}\cO_X} \Amod.
\end{align*}
We similarly obtain an analogue of $\Omega_X$ in the right $\cD_{\wt X}^\cA$-module 
$$\Omega_{\wt X} \vcentcolon= \varpi^{-1}\Omega_X \otimes_{\varpi^{-1}\cO_X} \Amod.$$
\subsubsection*{De Rham complexes on real blow-ups}
Let $X$ be a complex manifold and let a normal crossing divisor $D\subset X$ or a holomorphic function $f\colon X\to \C$ be given, and denote by $\wt{X}$ any of the real blow-ups $\XD$ or $\Xf$.

\begin{defi}\label{def:modDRsabbah}
The \emph{moderate growth de Rham complex} of a holonomic $\cD_X$-module $\cM$ is defined as (see, e.g., \cite{Sab13}\footnote{We remark that we follow here the convention of \cite{Kas03}: The objects $\DR$, $\DRmod$ etc.\ here are those denoted by ${}^p\DR$, ${}^p\DRmod$ in \cite{Sab13}, i.e., these objects are already shifted appropriately such that, for instance, $\DR(\cM)$ is a perverse sheaf.})
\begin{align*}
    \DRmod(\cM)&\defeq (\varpi^{-1}\Omega_X\otimes_{\varpi^{-1}\cO_X} \Amod) \Ltens_{\varpi^{-1}\cD_X}\varpi^{-1}\cM
    \\
    &\iso \varpi^{-1}\Omega_X\Ltens_{\varpi^{-1}\cD_X} (\Amod\otimes_{\varpi^{-1}\cO_X}\varpi^{-1}\cM).
\end{align*}
We set $\cM^\cA\vcentcolon=\Amod\otimes_{\varpi^{-1}\cO_X}\varpi^{-1}\cM \iso \cD_{\wt X}^\cA \otimes_{\varpi^{-1}\cD_X} \varpi^{-1}\cM$, so that $\cM^\cA$ is a left $\cD_{\wt X}^\cA$-module. 

Similarly, the \emph{rapid decay de Rham complex} is defined as
\begin{align*}
    \DRrd(\cM)&\defeq (\varpi^{-1}\Omega_X\otimes_{\varpi^{-1}\cO_X} \Ard) \Ltens_{\varpi^{-1}\cD_X}\varpi^{-1}\cM
    \\
    &\iso \varpi^{-1}\Omega_X\Ltens_{\varpi^{-1}\cD_X} (\Ard\otimes_{\varpi^{-1}\cO_X}\varpi^{-1}\cM).
\end{align*}

Let $\cN \in \mathrm{D}^\mathrm{b}(\cD_{\wt X}^\cA)$. We define the de Rham complex of $\cN$ to be the object 
$$
\mathrm{DR}_{\wt X}(\cN) \vcentcolon= \Omega_{\wt X} \Ltens_{\cD_{\wt X}^\cA} \cN.
$$

\end{defi}

\section{Solutions to the Riemann--Hilbert problem for holonomic $\cD$-modules}\label{sec:motivation}

In this section, we will give some motivation on the constructions that are used in this work, in particular in the context of the irregular Riemann--Hilbert correspondence of D'Agnolo--Kashiwara. We do this by explaining mainly the case of an exponential $\cD$-module on a complex curve. All of the results here are well-known, but might help the reader to understand better the background of the theory used later in this article.

Let $X$ be a complex manifold and $D \subset X$ a divisor, and let $g \in \cO_X(*D)$ be a meromorphic function with poles contained in $D$. Then, $\cE_{X\setminus D | X}^g$ is a natural example of a holonomic $\cD_X$-module with irregular singularities along $D$, called an \emph{exponential $\cD_X$-module} (with exponent $g$). This is a rank one $\cO_X(*D)$-module with connection $d-dg$, and its image under the classical de Rham functor is the shifted constant sheaf
$$
\DR(\cE_{X\setminus D | X}^g) \iso \C_{X \setminus D}[\dim X].
$$
That is, the classical de Rham functor ``forgets" the exponent, and thus identifies all exponential $\cD_X$-modules with poles along $D$.
However, the enhanced de Rham functor fixes this issue:
\begin{align*}
\DRE(\cE_{X\setminus D | X}^g) \iso \RIhom(\C_{X\setminus D},\mathbb{E}_{X \setminus D|X}^{\mathrm{Re}(g)})[\dim X] \end{align*}
(see Lemma 9.3.1 of \cite{DK16}).
The type of enhanced ind-sheaf thus obtained, called an enhanced exponential ind-sheaf, is of fundamental importance to understanding the irregular Riemann--Hilbert correspondence. We elaborate more below.

Let $U \subseteq X$ be an open subset, and let $h \colon U \to \R$ be continuous. To this data, for any $c \in \R$ we associate the sheaf $\C_{\{t+h\geq c\}}$ on $X \times \R$, where
$$
\{t+h \geq c\} \vcentcolon= \{(x,t) \in X \times \R \, | \, x \in U, t+h(x) \geq c\}.
$$
and take the ind-limit over $c \to +\infty$, considered as an object in $\EbIC{X}$:
\begin{align*}
\mathbb{E}_{U|X}^h &\vcentcolon= \indlim{c \to +\infty} \C_{\{t+h\geq c\}} \\
&\iso \C_X^\EE \conv \C_{\{t+h\geq 0\}}
\end{align*}
We call such an object an \emph{exponential enhanced ind-sheaf} (with exponent $h$), analogously to the $\cD$-module setting above. Loosely, one cares about such objects for their ability to keep track of the asymptotic behavior of the exponent $h$. 

\medskip

\subsection{The dimension one case}\label{subsec:dim1case}
Revisiting the above example in the case where $X = \C$, $D = \{z= 0\}$ and $g \in \cO_X(*0)$, we take $U = X^* \vcentcolon= X \setminus \{0\}$. Throughout this section, we also discuss only the \emph{unramified} case. Then, the de Rham complex of the exponential $\cD_X$-module $\mathcal{E}_{X^*|X}^{g}$ is isomorphic to $\C_{X^*}[1]$; the stalk of this local system at a non-zero point is spanned by the function $e^{g(z)}$. 

Whether or not this exponential function has moderate growth at $0$ is determined by certain bounds on the growth of $|e^{g(z)}| = e^{\mathrm{Re}(g(z))}$ as $z \to 0$.

Let $j\colon X^* \hookrightarrow X$ be the open embedding, $W \subseteq X$ be open, and let $u$ be a section of $j_*j^{-1}\cO_X$ on $W$, in particular $u(z)$ determines a $\C$-valued holomorphic function on $W\setminus\{0\}$. We say that $u$ has \emph{moderate growth at $0$} if there exists a sufficiently small neighborhood $U$ of $0$ such that there exist constants $C >0$ and $N \in \mathbb{N}$ such that the bound
$$
|u(z)|\leq C|z|^{-N}
$$
holds for all $z \in (U\cap W)\setminus \{0\}$.

We say $u$ has \emph{moderate growth at angle $\theta \in S^1$} if $u(z)$ has moderate growth on some angular neighborhood $W \subseteq X^*$ of $\theta$. That is, there exists $r>0$ and $0<\epsilon \ll 1$ such that $u(z)$ has moderate growth on the subset $\{z \in X \, | \, 0<|z|<r, \mathrm{arg}(z) \in (\theta-\epsilon,\theta+\epsilon)\}$.

Generally, this bound is not satisfied for exponential functions of the form $u(z)=e^{g(z)}$ at every angle $\theta \in S^1$ (e.g., this occurs if and only if the exponent $g$ is bounded near $0$). Instead, if $g$ has a pole at $0$, $e^{g(z)}$ has moderate growth only at those angles $\theta$ having an angular neighborhood on which $\mathrm{Re}(g(z)) < 0$. To separate out such angles (which we can without loss of generality picture as half-lines originating at $0$), we pass to the space of polar coordinates of $X$ at $0$, called the \emph{real blow-up of $X$ at $0$}:
\begin{align*}
\wt X_0 \vcentcolon= \R_{\geq 0} \times S^1 &\xlongrightarrow{\varpi} \C \\
(r,e^{i\theta}) &\mapsto re^{i\theta}
\end{align*}

Extending $\mathcal{E}_{X^*|X}^g = (\cO_X(*0),d-dg)$ to a $\cD$-module on $\wt X_0$ requires a bit of care, as $\R_{\geq 0} \times S^1$ is not a complex manifold (it is a smooth manifold with boundary), so we must define an analogue of the sheaf $\cO_X$ for $\wt X_0$. We recall the morphisms of Subsection \ref{subsec:realblowup}: 
\begin{equation}
\begin{tikzcd}\partial \wt X_0 \arrow[hook, r, "\tilde i"] & \wt X_0 \arrow[bend left=35,rr,"\varpi"]&X^*\arrow[l,"\;\tilde j",swap,hook']\arrow[r,hook,"j"] & X\end{tikzcd}
\end{equation}

 One approach is to use a certain subsheaf of $\tilde j_*\cO_{X^*} \iso \tilde j_* j^{-1}\cO_X$ of smooth functions on $\wt X_0$ that are holomorphic on the interior $\wt X_0 \backslash \partial \wt X_0 = X^*$ and have \emph{moderate growth} along the boundary $\partial \wt X_0 \vcentcolon= \varpi^{-1}(0) \iso S^1$, denoted $\cA_{\wt X_0}^{\mathrm{mod}}$ (for the exact definition of this sheaf, see, e.g., Subsection 8.3 of \cite{Sab13} or Section 4 of \cite{Moc14}). Using the sheaf of moderate growth functions, we define the \emph{moderate growth de Rham complex} of $\cM \in \DbholD{X}$ as an object of $\DbC{\widetilde{X}_0}$ which, loosely, is the complex whose stalk at a point $z \in \wt X_0$ consists of local flat sections of $\cM$ that have moderate growth at the point $z$. Precisely, we set
\begin{align*}
\mathrm{DR}_{\wt X_0}^{\mathrm{mod}}(\cM) \vcentcolon= \varpi^{-1}\Omega_X\Ltens_{\varpi^{-1}\cD_X} (\cA_{\wt X_0}^{\mathrm{mod}} \otimes_{\varpi^{-1}\cO_X}\varpi^{-1}\cM)
\end{align*}

This is easy to compute for our exponential $\cD_X$-module above: if $g$ has an effective pole at $0$, we have
\begin{align*}
\mathrm{DR}_{\wt X_0}^{\mathrm{mod}}(\mathcal{E}_{X^*|X}^g) \iso \C_{I_g \sqcup X^*} [1] 
\end{align*}
where 
$$
I_g \vcentcolon= \partial \wt X_0 \setminus \overline{ \{z \in X^* \, | \, \mathrm{Re}(g(z))\geq 0\}}
$$
with the closure taken inside $\wt X_0$. This interval $I_g \subset S^1$ consists of those angles $\theta$ having an open neighborhood in $\wt X_0$ on which $e^g$ has moderate growth. If $g \in \cO_X(*0)/\cO_X \iso z^{-1}\C\{z^{-1}\}$ has pole order $m>0$, then $I_g$ is a disjoint union of $m$ open (equally spaced) intervals on $S^1$ (cf.\ example 3.11 of \cite{DK21sh}). When $g=0$ (or, more generally, $g \in \cO_X$), we set $I_0 \vcentcolon= S^1$.

\medskip

\subsection{Stokes filtrations} Noticing that the complex $\mathrm{DR}_{\wt X _0}^{\mathrm{mod}}(\mathcal{E}_{X^*|X}^g)$ only tells us ``new" information about $\mathcal{E}_{X^*|X}^g$ along the boundary of $\wt X_0$, it is often useful to just consider its restriction to this subset. For general $\cM \in \DbholD{X}$, we define the object
$$
\Psi_0^{\leq 0}(\cM) \vcentcolon= \tilde i^{-1}\mathrm{DR}_{\wt X _0}^{\mathrm{mod}}(\cM)[-1],
$$
called the \emph{moderate growth nearby cycles} of $\cM$ at $0$. On exponentials, this is easy to compute:
\begin{align*}
\Psi_0^{\leq 0}(\mathcal{E}_{X^*|X}^g) \iso \C_{I_g}.
\end{align*}

 The superscript ``$\leq 0$" in the functor $\Psi_0^{\leq 0}$ refers to a filtration by $\R$-constructible objects of $\DbC{S^1}$ called the \emph{Stokes filtration}. The failure of the classical de Rham functor to identify the exponent $g$ in the isomorphism $\DR(\mathcal{E}_{X^*|X}^g) \iso \C_{X^*}[1]$ was originally remedied with the use of the Stokes filtration (cf., e.g., \cite{Mal91}), and we will quickly sketch this argument (following \cite{Sab13}, see also \cite{Sab19}). 

Denote by $\cL_g \vcentcolon= \tilde i^{-1}\tilde j_* j^{-1}\DR(\mathcal{E}_{X^*|X}^g)[-1] \iso \C_{S^1}$ the constant sheaf on $S^1$ with stalk $\C$, and for any meromorphic function $h$ on $X$ with poles contained in $0$, define the following $\R$-constructible subsheaf of $\cL_g$:
\begin{align*}
\mathrm{F}_{\leq h}\cL_g \vcentcolon= \Psi_0^{\leq h}(\mathcal{E}_{X^*|X}^g) &\vcentcolon = \Psi_0^{\leq 0}(\mathcal{E}_{X^*|X}^g \overset{D}{\otimes} \mathcal{E}_{X^*|X}^{-h}) \\
&\iso \Psi_0^{\leq 0}(\mathcal{E}_{X^*|X}^{g-h}) \\
&\iso \C_{I_{g-h}}.
\end{align*}

Notice that whenever the function $e^{h-h'}$ has moderate growth at some angle $\theta$, there is a monomorphism 
$$
\mathrm{F}_{\leq h}\cL_g \hookrightarrow \mathrm{F}_{\leq h'}\cL_g
$$
on some open neighborhood of $\theta$ on $S^1$. The collection of subsheaves $\mathrm{F}_{\leq h}\cL_g$ is a filtration of $\cL_g$ indexed by meromorphic functions. (Recall that we consider the non-ramified case here for simplicity; in general, one considers multi-valued meromorphic functions $h$ and considers only open angular sectors about the origin on which the Puiseux expansion of $h$ converges. To simplify the exposition we do not do this, see, e.g., \cite{Sab13} for a more complete treatment), and we claim the data of this filtration allows us to recover our original exponent $g$ (up to possibly adding a holomorphic function). Suppose that, for some other exponent $g'$, the local systems $\mathrm{F}_{\leq h}\cL_g$ and $\mathrm{F}_{\leq h}\cL_{g'}$ are equal for all $h$. Consequently, it must be the case that for $h=g$,
\begin{align*}
\mathrm{F}_{\leq g}\cL_{g'} = \mathrm{F}_{\leq g}\cL_g \iso \Psi_0^{\leq 0}((\cO_X,d)) \iso \C_{S^1}
\end{align*}
i.e., $e^{g-g'}$ has moderate growth at all angles $\theta \in S^1$, which can only happen if $g-g' \in \cO_X$ is bounded in a neighborhood of $0$ as mentioned above. Therefore the Stokes filtration $\{\mathrm{F}_{\leq h}\cL_g\}_h$ on the local system $\cL_g$ is uniquely determined by $g \in \cO_X(*0)/\cO_X$. Since $\cE_{X^*|X}^g \iso \cE_{X^*|X}^{g+g_0}$ for any $g_0 \in \cO_X$, we are done. 

\medskip

In general, this procedure can be extended to give an equivalence between the category of meromorphic connections on $X$ with poles contained in $\{0\}$ and the category of Stokes-filtered local systems on $\wt X_0$:
\begin{align*}
\cM &\mapsto (\cL_\cM, \mathrm{F}_{\leq h}\cL_\cM)
\end{align*}
where $\cL_\cM \vcentcolon= H^0(\tilde i^{-1}R \tilde j_*j^{-1}\DR(\cM)[-1])$, and 
\begin{align*}
\mathrm{F}_{\leq h}\cL_\cM \vcentcolon= \Psi_0^{\leq 0}(\cM \overset{\mathrm{D}}{\otimes} \mathcal{E}_{X^*|X}^{-h}). 
\end{align*}

The advantage of this approach to the irregular Riemann--Hilbert correspondence is that it produces topological objects (Stokes-filtered local systems) that are very concrete---they are collections of classical $\R$-constructible sheaves. The issue is that it is very difficult to generalize to higher dimensional manifolds $X$ and meromorphic connections with poles contained in a normal crossing divisor $D \subset X$, let alone arbitrary objects of the derived category $\DbholD{X}$. Moreover, it is difficult to understand the behavior of the Stokes filtration under the usual type of six functor formalism we would like. This is in stark contrast with the irregular Riemann--Hilbert correspondence of D'Agnolo--Kashiwara, which works in all dimensions and for all of $\DbholD{X}$ and has a robust functorial framework, but the objects of $\EbIC{X}$ are themselves much harder to understand. Stokes phenomena in higher dimensions have been previously studied by various people (see, for example, \cite{Maj84} and \cite{Sab00}, and \cite{Moc11b} for a survey) from the $\cD$-module perspective and recently in \cite{Moc22a,Moc22b} with enhanced ind-sheaves.

As a consequence, it is then natural to want to further develop the ``dictionary" that allows us to move between these versions of the Riemann--Hilbert correspondence; to express the moderate growth de Rham complex of $\cM$ entirely in terms of the enhanced ind-sheaf $\DRE(\cM)$, and similarly express the Stokes-filtered local system associated to $\cM$ in terms of $\DRE(\cM)$ in the case where $\cM$ is a meromorphic connection on $X$ with poles contained in $D$. This will give better intuition for both correspondences, and we believe lend a greater understanding of the topological data contained in the relatively recently-defined category of (constructible) enhanced ind-sheaves. 

In dimension one, this was already accomplished by D'Agnolo--Kashiwara \cite{DK20cycles}; given a holonomic $\cD_X$-module $\cM$ on $X=\C$ with poles contained in the divisor $D= \{0\}$, 
\begin{align}\label{eq:DKleq0}
\mathrm{DR}_{\wt X_0}^{\mathrm{mod}}(\cM) &\iso \mathrm{sh}_{\wt X_0} (\EE \tilde j_* \EE j^! \DRE(\cM)) \\
\Psi_0^{\leq 0}(\cM) &\iso \tilde i^{-1}\mathrm{sh}_{\wt X_0}(\EE \tilde j_* \EE j^! \DRE(\cM))[-1]\notag
\end{align}
where the graded piece of the Stokes filtration on $\cM$ in degree 0 is given by
$$
\Psi_0^0(\cM) \iso \mathrm{sh}_{\partial \wt X_0}(\EE \tilde i^{-1}\EE \tilde j_* \EE j^!\DRE(\cM))[-1].
$$
Likewise, for a meromorphic function $h$ at $0$, the enhanced-ind sheaf analogue of the exponentially twisted $\cD$-module $\cM(h) \vcentcolon= \cM \overset{D}{\otimes} \mathcal{E}_{X^*|X}^{-h}$ is given by 
\begin{align*}
\DRE(\cM(h)) &\iso \RIhom^+(\mathbb{E}_{X^*|X}^{\mathrm{Re}(h)},\DRE(\cM))
\end{align*}
Therefore, 
\begin{align*}
\Psi_0^{\leq h}(\cM) &\iso \Psi_0^{\leq 0}(\cM(h)).
\end{align*}
D'Agnolo--Kashiwara also obtain and study an analogue of $\cA_{\wt X_0}^{\mathrm{mod}}$ on $\wt X_0$ using ind-sheaves and the theory of tempered distributions, called the ind-sheaf of \emph{tempered holomorphic functions} $\cO_{\wt X_0}^{t}$ on $\wt X_0$. This object satisfies the relationships $\alpha \Ot{\wt X_0} \iso \cA_{\wt X_0}^{\mathrm{mod}}$ in $\mathrm{D}^\mathrm{b}(\varpi^{-1}\cD_X)$, and $\Ot{\wt X_0} \iso \varpi^!\cO_X^t(*D)$ in $\mathrm{D}^\mathrm{b}(\mathrm{I}\varpi^{-1}\cD_X)$. This approach works in all dimensions and for any normal crossing divisor $D$ on the real blow-up $\XD$, but some work must be done in the case of the blow-up along arbitrary functions $f$ on $X$ (see Lemma-Definition \ref{lemmadefi:OtXf}).

\medskip

\subsection{Duality observations on the real blow-up}
With exponential functions, we immediately notice an interesting relationship between the angles of moderate growth of $e^g$ and that of $e^{-g}$ when $g$ has an effective pole at $0$, arising as flat sections of the $\cD$-module $\cE_{X^*|X}^g$ and its dual $\cE_{X^*|X}^{-g} \iso (\mathbb{D}_X \cE_{X^*|X}^g)(*0)$. That is, if $e^g$ has moderate growth at angle $\theta \in S^1$ (so that $\mathrm{Re}(g(z)) <0$ on an angular neighborhood of $\theta$), then $e^{-g}$ will \emph{not} have moderate growth at $\theta$; additionally, if $\theta$ is not an angle corresponding to a half tangent to the curve $\mathrm{Re}(g(z))=0$ at $0$ (called the Stokes line of $e^g$), then exactly one of $e^g$ and $e^{-g}$ will have moderate growth at $\theta$. We wish to explore this observation by comparing their moderate growth nearby cycles at $0$.  By our previous computation, these sheaves are isomorphic to $\C_{I_g}$ and $\C_{I_{-g}}$, respectively. 

\smallskip

Since the $\cD_X$-modules are exchanged by duality $\cE_{X^*|X}^g \iso (\mathbb{D}_X\cE_{X^*|X}^{-g})(*0)$, it is natural to ask if the corresponding nearby cycles are similarly exchanged by \emph{Verdier} duality on $\DbC{S^1}$; however, this is not the case: 
$$
\Psi_0^{\leq 0}(\cE_{X^*|X}^g) \iso \C_{I_g} \ncong \mathrm{D}_{S^1}(\C_{I_{-g}}) \iso \mathrm{D}_{S^1}\Psi_0^{\leq 0}(\cE_{X^*|X}^{-g}).
$$
Instead, the correct duality statement for arbitrary holonomic objects $\cM \in \DbholD{X}$ is 
\begin{align}\label{eqn:duality0}
\Psi_0^{\leq 0}(\mathbb{D}_X\cM) \iso \mathrm{D}_{S^1}\Psi_0^{\geq 0}(\cM)[1],
\end{align}
where, loosely, $\Psi_0^{\geq 0}$ records local flat sections of $\cM$ that have \emph{greater than moderate growth} along various angles of $S^1$. Like $\Psi_0^{\leq0}(\cM)$, $\Psi_0^{\geq 0}(\cM)$ can be obtained from the restriction of a certain de Rham complex with growth conditions on $\wt X_0$, called the \emph{rapid decay de Rham complex}, denoted $\mathrm{DR}_{\wt X_0}^{\mathrm{rd}}(\cM)$. We then have a natural isomorphism
$$
\Psi_0^{\geq 0}(\cM) \iso {\tilde i}^!\mathrm{DR}_{\wt X_0}^{\mathrm{rd}}(\cM).
$$

\medskip

\subsection{Rapid decay de Rham complexes}\label{subsec:rapidDR}
Similar to moderate growth, the \emph{rapid decay condition} is a growth condition on functions. Precisely, a section $u \in j_* j^{-1} \cO_X$ on an open set $W \subseteq X$ has \emph{rapid decay at $0$} if there exists a neighborhood $U\subseteq X$ of $0$ such that for every $N \in \mathbb{N}$, there exists a constant $C_N$ such that the estimate
$$
|u(z)| \leq C_{N}|z|^N
$$
holds for all $z \in (U \cap W)\setminus\{0\}$. As with moderate growth, we say $u$ has rapid decay at angle $\theta \in S^1$ if it has rapid decay on some angular neighborhood of $\theta$ in $X^*$.
The sheaf of holomorphic functions on $\wt X_0$ having rapid decay along $\partial \wt X_0$ is denoted $\cA_{\wt X_0}^{\mathrm{rd}}$, and we remark there are natural inclusions of sheaves on $\wt X_0$:
$$
\cA_{\wt X_0}^{\mathrm{rd}} \subset \cA_{\wt X_0}^{\mathrm{mod}} \subset \tilde j_*j^{-1}\cO_X.
$$
The \emph{rapid decay de Rham complex} of a holonomic $\cD_X$-module $\cM$:
\begin{align}\label{eqn:rapiddecayDR}
\mathrm{DR}_{\wt X_0}^{\mathrm{rd}}(\cM) = \varpi^{-1}\Omega_X \Ltens_{\varpi^{-1}\cD_X} (\mathcal{A}_{\wt X_0}^{\mathrm{rd}} \otimes_{\varpi^{-1}\cO_X} \varpi^{-1}\cM)
\end{align}
and the \emph{rapid decay nearby cycles} of $\cM$ at $0$ is analogously defined as the restriction
\begin{align}\label{eqn:rapiddecaynearby}
\Psi_0^{<0}(\cM) \vcentcolon= \tilde i^{-1}\mathrm{DR}_{\wt X_0}^{\mathrm{rd}}(\cM)[-1].
\end{align}
The superscript ``$<0$" is again related to the Stokes filtration. Consider $\cL_\cM = \tilde i^{-1}\tilde j_* j^{-1}\DR(\cM)[-1]$, and define $\mathrm{F}_{<0}\cL_\cM \vcentcolon= \Psi_0^{<0}(\cM)$. The Stokes filtration yields the following short exact sequence of sheaves, for any meromorphic connection $\cM$ on $X$ with poles contained in $0$:
$$
0 \to \mathrm{F}_{<0}\cL_\cM \to \cL_\cM \to \Psi_0^{\geq 0}(\cM) \to 0
$$
where we sometimes write $\mathrm{F}^{\geq 0}\cL_\cM \vcentcolon= \Psi_0^{\geq 0}(\cM)$. Likewise, for such connections there is an inclusion of sheaves $\mathrm{F}_{<0}\cL_\cM \subseteq \mathrm{F}_{\leq 0}\cL_\cM$ which fit into the short exact sequence 
\begin{align*}
0 \to \mathrm{F}_{<0}\cL_\cM \to \mathrm{F}_{\leq 0}\cL_\cM \to \mathrm{Gr}_0^{\mathrm{F}} \cL_\cM \to 0.
\end{align*}
Here, $\mathrm{Gr}_0^{\mathrm{F}}\cL_\cM \iso \Psi_0^0(\cM)$ is the graded piece of the Stokes filtration on $\cL_\cM$ in degree $0$. We will further study this topic later in Subsection \ref{subsec:rapiddecay} and Section \ref{sec:enhmodrd} from the perspective of enhanced ind-sheaves.

\medskip

\subsection{Duality revisited}\label{subsec:dualdim1}

With this in hand, the duality isomorphism \eqref{eqn:duality0} stated above can be rephrased as a perfect pairing
\begin{align}\label{eqn:duality1}
\tilde i^{-1}\mathrm{DR}_{\wt X_0}^{\mathrm{mod}}(\cM)[-1] \otimes \tilde i^! \mathrm{DR}_{\wt X_0}^{\mathrm{rd}}(\mathbb{D}_X\cM) \longrightarrow \omega_{S^1}
\end{align}
in $\DbC{S^1}$, where $\omega_{S^1}\iso \C_{S^1}[1]$ is the Verdier dualizing complex on $S^1$. This suggests the more general duality for $\cM \in \DbholD{X}$, which we write again as a pairing:
\begin{align}\label{eqn:duality2}
\mathrm{DR}_{\wt X_0}^{\mathrm{mod}}(\cM) \otimes \mathrm{DR}_{\wt X_0}^{\mathrm{rd}}(\mathbb{D}_X \cM) \longrightarrow \omega_{\wt X_0}.
\end{align}
This duality was first proven in the curve case by S.\ Bloch and H.\ Esnault \cite{BE04}, where it was phrased cohomologically as a perfect pairing between the algebraic de Rham cohomology of a flat algebraic connection $(E,\nabla)$ on $X^*$ and the rapid decay homology of the dual connection $(E^\vee,\nabla^\vee)$ on a good compactification $X$ of $X^*$. The higher-dimensional case was proven by M.\ Hien \cite{Hie09} for arbitrary flat algebraic connections $(E,\nabla)$ on smooth quasi-projective algebraic varieties $U$ over $\C$, where one must carefully choose a \emph{good compactification} of $U$ with respect to $(E,\nabla)$---i.e., $X \backslash U = D$ is a normal crossing divisor along which the flat meromorphic connection $(E(*D),\nabla)$ has good formal structure. Then, Hien's pairing is described both at the level of sheaves as in (\ref{eqn:duality2}), and on global sections as a perfect pairing of finite-dimension $\C$-vector spaces
\begin{align}\label{eqn:duality3}
\mathrm{H}_{\mathrm{dR}}^m(U;(E,\nabla)) \otimes \mathrm{H}_m^{\mathrm{rd}}(U;(E^\vee,\nabla^\vee)) \longrightarrow \C,
\end{align}
where $\mathrm{H}_{\mathrm{dR}}$ denotes algebraic de Rham cohomology, and $\mathrm{H}_m^{\mathrm{rd}}$ denotes rapid decay homology. We will revisit the pairing (\ref{eqn:duality3}) in Subsection \ref{subsec:dualityBEH}.

\section{De Rham complexes with growth conditions and enhanced ind-sheaves}\label{sec:DRmodandrd}

In this section, we recall and put together some notions and constructions from previous works on enhanced ind-sheaves. The main aim is to motivate the formulas for our definitions in the following sections, by making the connection to the moderate growth and rapid decay de Rham functors. 

\subsection{Moderate growth de Rham complexes in higher dimensions: normal crossing divisors}
We will now show how the ideas of \cite{DK20cycles} naturally extend to the higher-dimensional case for $\cD_X$-modules on a complex manifold $X$ with a (simple) normal crossing divisor $D$.

Consider the real oriented blow-up $\XD$ along the components of $D$ and recall the notation for morphisms from \eqref{eq:morphXD}.

As the next proposition shows, a formula similar to \eqref{eq:DKleq0} recovers the moderate growth de Rham complex from the enhanced de Rham complex of a holonomic $\cD_X$-module in the normal crossing case. Its proof is probably not new to experts and relies heavily on results already established in \cite{DK16}, but we give it here since it does not seem to appear in other works. In particular, it gives a direct proof for the fact that the object \eqref{eq:DKleq0} recovers the moderate growth part of the Stokes filtration in the one-dimensional case.

\begin{prop}\label{prop:DRmod}
Let $\cM\in\DbholD{X}$ and $D\subset X$ a simple normal crossing divisor. Then there is an isomorphism in $\DbC{\XD}$
$$\shbuD\big(\EE \Dj_* \EE j^{-1} \DRE(\cM)\big)\iso \DRmodD(\cM).$$
 In particular, if $\mathrm{dim}_\C X=1$, we have
$$\Psi_a^{\leq 0}\big(\DRE(\cM)\big)\iso \mathrm{F}_{\leq 0}\cL_\cM,$$
where the right-hand side denotes the ``$\leq 0$'' part of the Stokes filtration on the local system $\cL_\cM\vcentcolon= i_D^{-1} \RR \Dj_* j^{-1} \DR(\cM)$.
\end{prop}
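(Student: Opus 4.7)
The approach is to reduce the statement to a computation involving only the enhanced ind-sheaf $\cO_X^\EE$, and then to invoke the known relation between tempered holomorphic functions on $\XD$ and the moderate growth sheaf $\AmodD$. First, one factors the open embedding $j\colon(X\setminus D)_\infty\hookrightarrow X$ as $j=\varpi_D\circ\Dj$, which yields
\[
\EE\Dj_*\,\EE j^{-1} \iso \EE\Dj_*\,\EE\Dj^{-1}\,\EE\varpi_D^{-1}.
\]
Combining this with $\DRE(\cM)\iso\Omega_X^\EE\Ltens_{\cD_X}\cM$ and $\Omega_X^\EE\iso\Omega_X\Ltens_{\cO_X}\cO_X^\EE$, and using the compatibility of $\EE\varpi_D^{-1}$ with tensor products together with the projection formula for $\EE\Dj_*\,\EE\Dj^{-1}$ against pulled-back classical sheaves, applying the sheafification functor gives
\begin{align*}
\shbuD(\EE\Dj_*\,\EE j^{-1}\DRE(\cM)) &\iso \varpi_D^{-1}\Omega_X\Ltens_{\varpi_D^{-1}\cD_X}\bigl(\shbuD(\EE\Dj_*\,\EE\Dj^{-1}\,\EE\varpi_D^{-1}\cO_X^\EE) \\
&\qquad \otimes_{\varpi_D^{-1}\cO_X}\varpi_D^{-1}\cM\bigr).
\end{align*}

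The task therefore reduces to establishing the isomorphism of $\varpi_D^{-1}\cD_X$-modules
\[
\shbuD(\EE\Dj_*\,\EE\Dj^{-1}\,\EE\varpi_D^{-1}\cO_X^\EE)\iso\AmodD,
\]
since substituting this into the previous formula immediately produces $\DRmodD(\cM)$. To prove this identification, I would translate the left-hand side through the adjunctions between enhanced and classical ind-sheaves from \cite{DK16}: the sheafification of $\EE\varpi_D^{-1}\cO_X^\EE$ can be rewritten via the ind-sheaf of tempered holomorphic functions $\Ot{\XD}$ on the real blow-up, and the operation $\EE\Dj_*\,\EE\Dj^{-1}$ on $\EE\varpi_D^{-1}\cO_X^\EE$ captures precisely those sections that admit a tempered (hence moderate growth) extension across $\partial\XD$. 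One then concludes by the classical identification $\alpha_{\XD}\Ot{\XD}\iso\AmodD$ linking tempered holomorphic functions to the moderate growth sheaf on the real blow-up (cf.\ \cite{DK16}, \cite{KS16}).

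The main obstacle lies in carefully tracking the bordered-space structures when moving between $X$, $\XD$, and $(X\setminus D)_\infty$, and in verifying that the explicit natural transformations involved respect the $\varpi_D^{-1}\cD_X$-module structures on both sides, so that the result is a genuine isomorphism of differential complexes rather than merely an abstract equivalence of objects. Once the main formula is in hand, the one-dimensional consequence is immediate: restricting to the boundary $\partial\XD\iso S^1$ via $\tilde i_D^{-1}$ and shifting by $[-1]$ turns the left-hand side into the moderate growth nearby cycles $\Psi_a^{\leq 0}(\DRE(\cM))$ by definition, while on the right one recovers the $\mathrm{F}_{\leq 0}$-piece of the Stokes filtration on $\cL_\cM$ via its standard description in terms of moderate growth nearby cycles (cf.\ \cite{Sab13}).
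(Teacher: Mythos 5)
Your proposal is correct and follows essentially the same route as the paper: both arguments reduce, after commuting the sheafification past $\Ltens_{\cD_X}\cM$, to the identification $\shbuD\,\RIhom(\pi^{-1}\C_{X\setminus D},\EE\varpi_D^!\cO_X^\EE)\iso\AmodD$, which the paper isolates as Lemma~\ref{lemmaShDR} and proves via $\Ot{\XD}$ and $\alpha_{\XD}\Ot{\XD}\iso\AmodD$, exactly as you suggest. The one step you should make explicit is that the commutation of $\RR\cH om^\EE(\C^\EE_{\XD},-)$ with the tensor over $\cD$ is not formal but is supplied by \cite[Theorem 5.6.1(ii)]{KS01}, which is what the paper invokes at that point.
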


In the proof of this proposition, we will make use of the following lemma. It is the analogue for the de Rham functor on the real blow-up of \cite[Lemma 9.7.1]{DK16} (which shows that $\sh{X} \DRE(\cM)\iso\DR(\cM)$, although the sheafification functor was not denoted like this in \emph{loc.\ cit.}). We will use the constructions and notation for objects on the real blow-up along a normal crossing divisor from \cite[§7, §9.2]{DK16}.

\begin{lemma}\label{lemmaShDR}
Let $\cN\in\mathrm{D}^\mathrm{b}(\cD^\cA_{\XD})$. There is an isomorphism in $\DbC{\XD}$
$$\shbuD\DREbuD(\cN)\simeq \DRbuD(\cN).$$
\end{lemma}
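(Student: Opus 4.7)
The plan is to adapt the proof of \cite[Lemma 9.7.1]{DK16}, which established the analogous statement $\sh{X} \DRE(\cM) \iso \DR(\cM)$ on $X$, to the real blow-up setting on $\XD$. First I would unpack the definition of the enhanced de Rham functor on the real blow-up, expressing it as
$$
\DREbuD(\cN) \iso \Omega^\EE_{\XD} \Ltens_{\cD^{\cA}_{\XD}} \cN,
$$
where $\Omega^\EE_{\XD}$ is built from an enhanced analogue of $\AmodD$, namely the sheaf $\cO^\EE_{\XD}$ constructed in \cite{DK16}. In parallel, I would recall $\DRbuD(\cN) = \Omega_{\XD} \Ltens_{\cD^{\cA}_{\XD}} \cN$ as introduced in Definition~\ref{def:modDRsabbah}.

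Next I would apply $\shbuD$ to $\DREbuD(\cN)$ and commute it past the derived tensor product over $\cD^{\cA}_{\XD}$, reducing the lemma to the key identification $\shbuD(\Omega^\EE_{\XD}) \iso \Omega_{\XD}$. To justify this commutation, I would reduce locally to the case where $\cN$ admits a free resolution over $\cD^{\cA}_{\XD}$; this is available because $\cD^{\cA}_{\XD}$ inherits good homological properties from $\cD_X$ via the flatness of $\AmodD$ over $\varpi_D^{-1}\cO_X$ recalled in Section~\ref{sec:review}. On a free module, the statement is essentially the key identification itself, which in turn follows from $\shbuD(\cO^\EE_{\XD}) \iso \AmodD$ together with the formula $\Omega_{\XD} = \varpi_D^{-1}\Omega_X \otimes_{\varpi_D^{-1}\cO_X} \AmodD$.

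The main obstacle is the identification $\shbuD(\cO^\EE_{\XD}) \iso \AmodD$, i.e., that sheafifying the enhanced ind-sheaf of tempered holomorphic functions on $\XD$ recovers the sheaf of moderate-growth holomorphic functions. This is the real blow-up analogue of $\sh{X}\cO^\EE_X \iso \cO_X$ and should proceed by expressing $\cO^\EE_{\XD}$ as an ind-object indexed by growth rates along $\partial \XD$, whose image under $\RR\cH om^\EE(k^\EE_{\XD}, -)$ precisely records sections with at most polynomial growth at the boundary. A secondary technical point is compatibility of $\shbuD$ with $\Ltens_{\cD^{\cA}_{\XD}}$, which one can settle using that $\sHom^\EE(k^\EE_{\XD}, -)$ is right adjoint to a tensoring operation and hence commutes with finite direct sums, extended to complexes via the local free resolution. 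Assembling these two pieces yields $\shbuD \DREbuD(\cN) \iso \DRbuD(\cN)$.
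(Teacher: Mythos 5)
Your overall reduction matches the paper's: both apply $\shbuD=\RHomE(\C^\EE_{\XD},\bullet)$ to $\Omega^\EE_{\XD}\Ltens_{\cD^\cA_{\XD}}\cN$, pull the Hom out of the tensor product, and thereby reduce the lemma to the single identity $\RHomE(\C^\EE_{\XD},\cO^\EE_{\XD})\iso\AmodD$ (plus the easy passage from $\cO^\EE_{\XD}$ to $\Omega^\EE_{\XD}$, for which the paper uses \cite[Lemma 4.10.3]{DK16}). The genuine gap is that you do not prove this identity, and the route you sketch for it is unlikely to work as stated: $\cO^\EE_{\XD}$ is not defined as an ind-object indexed by growth rates along $\partial\XD$, but rather (following \cite[\S 9.2]{DK16}) via tempered holomorphic functions on $\XD\times\PP$, and extracting a growth-rate presentation from that definition is essentially as hard as the statement you want. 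The paper instead argues by adjunction: it rewrites $\cO^\EE_{\XD}\iso\RIhom(\wt{\varpi}_D^{-1}\pi^{-1}\C_{X\setminus D},\EE\varpi_D^!\cO^\EE_X)$ using \cite[(9.6.7)]{DK16}, moves $\C^\EE_{\XD}$ across the adjunction for $\EE\varpi_D^{-1}\dashv\EE\varpi_D^!$ so that the object becomes $\RHomE\big(\SolEbuD(\cO_X(*D)^\cA),\cO^\EE_{\XD}\big)$, and then invokes the reconstruction isomorphism \cite[(9.6.8)]{DK16} to obtain $\cO_X(*D)^\cA\iso\AmodD$. Some such input from \cite[\S 9.6]{DK16} seems unavoidable; ``records sections with at most polynomial growth at the boundary'' is the conclusion, not an argument.

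A secondary issue is your justification of the commutation $\RHomE(\C^\EE_{\XD},\Omega^\EE_{\XD}\Ltens_{\cD^\cA_{\XD}}\cN)\iso\RHomE(\C^\EE_{\XD},\Omega^\EE_{\XD})\Ltens_{\cD^\cA_{\XD}}\cN$ by local free resolutions: the lemma is stated for arbitrary $\cN\in\mathrm{D}^\mathrm{b}(\cD^\cA_{\XD})$ with no coherence hypothesis, so one cannot in general reduce to finitely generated free modules, and a functor that commutes with finite direct sums need not commute with the infinite ones occurring in resolutions of non-coherent modules. The paper sidesteps this by citing \cite[Theorem 5.6.1(ii)]{KS01}, which is designed for exactly this kind of statement; you would need either that result or an explicit coherence reduction to make your argument complete.
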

\begin{proof}
	First, one has isomorphisms
	\begin{align*}
		\shbuD\DREbuD(\cN)&\iso \RHomE\big(\C^\EE_{\XD},\DREbuD(\cN) \big)\\
		&\iso \RHomE\big(\C^\EE_{\XD},\Omega^\EE_{\XD}\overset{\mathrm{L}}{\otimes}_{\cD^\cA_{\XD}} \cN  \big)\\
		&\iso\RHomE( \C^\EE_{\XD},\Omega^\EE_{\XD} ) \overset{\mathrm{L}}{\otimes}_{\cD^\cA_{\XD}} \cN,
	\end{align*}
where the last isomorphism follows with \cite[Theorem 5.6.1(ii)]{KS01}.
Therefore, it suffices to show that
$$\RHomE( \C^\EE_{\XD},\Omega^\EE_{\XD} )\iso \varpi_D^{-1}\Omega_{X}\otimes_{\varpi_D^{-1}\cO_X}\AmodD.$$

For this, we observe that
\begin{align*}
	\RHomE(\C_{\XD}^\EE,\cO_{\XD}^\EE)&\iso \RHomE\big(\C_{\XD}^\EE, \RIhom(\wt{\varpi}_D^{-1}\pi^{-1}\C_{X\setminus D},\EE\varpi_D^!\cO^\EE_X)\big)\\
	&\iso \RHomE\big(\wt{\varpi}_D^{-1}\pi^{-1}\C_{X\setminus D}\otimes\C_{\XD}^\EE,\EE\varpi_D^!\cO^\EE_X\big)\\
	&\iso \RHomE\big(\EE\varpi_D^{-1}(\pi^{-1}\C_{X\setminus D}\otimes\C^\EE_X),\EE\varpi_D^!\cO^\EE_X\big)\\
	&\iso \RHomE\big(\EE\varpi_D^{-1}\SolE(\cO_X(*D)),\EE\varpi_D^!\cO^\EE_X\big)\\
	&\iso \RHomE\big(\SolEbuD(\cO_X(*D)^\cA),\cO^\EE_{\XD}\big)\\
	&\iso \cO_X(*D)^\cA\iso \AmodD.
\end{align*}

Here, the first isomorphism follows from \cite[(9.6.7)]{DK16}, the second-to-last line follows from the computations in \cite[p.\ 192]{DK16} and the last line follows from \cite[(9.6.8)]{DK16}.

Consequently, we can conclude
\begin{align*}
	\RHomE\big(\C_{\XD}^\EE,\Omega_{\XD}^\EE \big)&\iso \RHomE\big(\C_{\XD}^\EE, \pi^{-1}\varpi_D^{-1}\Omega_{X}\otimes_{\pi^{-1}\varpi_D^{-1}\cO_{X}}\cO_{\XD}^\EE\big)\\
	&\iso \varpi_D^{-1}\Omega_X\otimes_{\varpi_D^{-1}\cO_X}\RHomE\big(\C_{\XD}^\EE,\cO_{\XD}^\EE\big)\\
	&\iso \varpi_D^{-1}\Omega_X\otimes_{\varpi_D^{-1}\cO_X}\AmodD,
\end{align*}
where the second isomorphism is due to \cite[Lemma 4.10.3]{DK16}.
\end{proof}

\begin{proof}[Proof of Proposition \ref{prop:DRmod}]
For the left-hand side, one has
\begin{align*}
 \shbuD\big(\EE \Dj_*\EE j^{-1}\DRE(\cM)\big) &\iso \shbu\big(\EE \Dj_*\EE j^!\DRE(\cM)\big)\\
    &\iso \shbuD\big(\EE {\Dj}_*\EE \tilde{j}_D^!\EE \varpi_D^! \DRE(\cM)\big)\\
	&\iso \shbuD\RIhom\big(\wt{\varpi_D}^{-1}\pi^{-1}\C_{X\setminus D},\EE \varpi_D^! \DRE(\cM)\big)\\
	&\iso \shbuD\EE\varpi_D^!\RIhom\big(\pi^{-1}\C_{X\setminus D},\DRE(\cM)\big)\\
	&\iso \shbuD \DREbuD(\cM^\cA).
\end{align*}
In the third isomorphism, we have used \cite[Lemma 2.7.6]{DK19} and the last isomorphism follows from \cite[§9]{DK16} (cf.\ also \cite[p.\ 13]{IT20}).

Since, by definition, we have $\DRmodD(\cM)=\DRbuD(\cM^\cA)$, we can now conclude with Lemma~\ref{lemmaShDR}.
\end{proof}

One can deduce a similar result for the moderate growth de Rham functor on the real blow-up $\Xf$ along the function $f$ defining a normal crossing divisor.

\begin{cor}\label{cor:differentblowups}
	Let $f\colon X\to \C$ be a holomorphic function such that $D=f^{-1}(0)\subset X$ is a simple normal crossing divisor. Let $\cM\in\DbholD{X}$. Then there is an isomorphism in $\DbC{\Xf}$
	$$\shbuf\big(\EE \fj_*\EE j^{-1} \DRE(\cM)\big)\iso \DRmodf(\cM).$$
\end{cor}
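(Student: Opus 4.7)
The plan is to reduce the statement to Proposition~\ref{prop:DRmod} by exploiting the natural proper morphism $\varpi_{D,f}\colon \XD\to \Xf$ that exists because $D=f^{-1}(0)$. The inclusion $\fj\colon (X\setminus D)_\infty\hookrightarrow \Xf$ factors as $\fj = \varpi_{D,f}\circ \Dj$ in the category of bordered spaces, yielding at once
$$\EE\fj_*\EE j^{-1}\DRE(\cM)\iso \EE(\varpi_{D,f})_*\,\EE\Dj_*\EE j^{-1}\DRE(\cM).$$

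Next, I would use a compatibility between sheafification and proper enhanced pushforward. Since $\varpi_{D,f}$ is proper and $\EE(\varpi_{D,f})^{-1}\C^\EE_{\Xf}\iso \C^\EE_{\XD}$, the adjunction $\EE(\varpi_{D,f})^{-1}\dashv \EE(\varpi_{D,f})_*$ together with the formula $\shbuf(\bullet) = \RR\cH om^\EE(\C^\EE_{\Xf},\bullet)$ produces a natural isomorphism $\shbuf\circ\EE(\varpi_{D,f})_* \iso \RR(\varpi_{D,f})_*\circ\shbuD$. Combining this with Proposition~\ref{prop:DRmod} gives
$$\shbuf\bigl(\EE\fj_*\EE j^{-1}\DRE(\cM)\bigr)\iso \RR(\varpi_{D,f})_*\DRmodD(\cM).$$

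The remaining, and main, obstacle is the identification $\RR(\varpi_{D,f})_*\DRmodD(\cM)\iso \DRmodf(\cM)$. Using the flatness of $\AmodD$ and $\Amodf$ over the respective pullbacks of $\cO_X$, one reduces this to an isomorphism $\RR(\varpi_{D,f})_*\AmodD \iso \Amodf$ of $\varpi_f^{-1}\cD_X$-modules on $\Xf$. On the interior $X\setminus D$ the map $\varpi_{D,f}$ is a homeomorphism and both sides coincide with $\cO_X$, so the content is at the boundary, where the fibres of $\varpi_{D,f}$ are products of circles. In local coordinates with $f=z_1\cdots z_r$, moderate growth in the $|z_i|$ separately is locally equivalent to moderate growth in $|f|$, via the estimates $|f|^{-N}\leq \prod_i|z_i|^{-N}$ and $\prod_i|z_i|^{-N_i}\leq C|f|^{-\max_i N_i}$ on compact subsets. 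The hard part is to combine this with an acyclicity argument killing the higher direct images along the torus fibres, which should then yield the sheaf isomorphism and complete the proof.
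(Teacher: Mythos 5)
Your overall route is exactly the paper's: factor $\fj=\varpi_{D,f}\circ\Dj$, commute sheafification with the proper pushforward $\EE(\varpi_{D,f})_*$ (this is [DK21b, Lemma 3.9], which your adjunction argument essentially re-derives), and then invoke Proposition~\ref{prop:DRmod} to reduce everything to the identification $\RR(\varpi_{D,f})_*\DRmodD(\cM)\iso\DRmodf(\cM)$. The only divergence is in how that last identification is handled: the paper simply cites Mochizuki [Moc14, Proposition 4.7.4], whereas you attempt to prove it directly.

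As written, that attempt has a genuine gap. Your growth estimates comparing $\prod_i|z_i|^{-N_i}$ with $|f|^{-N}$ are correct and do identify the degree-zero direct image: a section of $\AmodD$ over $\varpi_{D,f}^{-1}(U)$ must have moderate growth on full torus neighborhoods of each boundary fibre, which is exactly the $\Amodf$ condition on $U$. But the substantive content of the statement is the vanishing of the higher direct images $R^k(\varpi_{D,f})_*\AmodD$ for $k>0$, i.e., the acyclicity of moderate-growth holomorphic functions along the compact torus fibres of $\varpi_{D,f}$. This is not a formal consequence of the pointwise estimates; it requires solving a $\overline{\partial}$-problem (or running a \v{C}ech argument on multisectors) with moderate growth bounds, and is precisely the content of [Moc14, Theorem 4.1.5 and Proposition 4.7.4]. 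You flag this as ``the hard part'' but do not supply it. A smaller point: your reduction from the de Rham complexes to the single sheaf isomorphism $\RR(\varpi_{D,f})_*\AmodD\iso\Amodf$ implicitly uses a projection formula for $\RR(\varpi_{D,f})_*$ over $\varpi_f^{-1}\cO_X$ and $\varpi_f^{-1}\cD_X$, together with the flatness of $\AmodD$ and $\Amodf$ over the pullbacks of $\cO_X$; this is legitimate for the proper map $\varpi_{D,f}$ but should be made explicit. If you are willing to quote Mochizuki for the compatibility of the moderate de Rham complex with this direct image, your argument coincides with the paper's proof; as a self-contained argument it remains incomplete at the acyclicity step.
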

\begin{proof}
	There is a natural morphism $\varpi_{D,f} \colon \XD\to \Xf$ and one has $\fj=\varpi_{D,f} \circ \Dj$. Consequently,
	\begin{align*}
		\shbuf\big(\EE \fj_*\EE j^{-1} \DRE(\cM)\big)&\iso \RR{\varpi_{D,f}}_*\shbuD\big(\EE \Dj_*\EE j^{-1} \DRE(\cM)\big)\\
		&\iso \RR{\varpi_{D,f}}_*\DRmodD(\cM)\iso \DRmodf(\cM).
	\end{align*}
Here, the first isomorphism follows from \cite[Lemma 3.9]{DK21sh} since $\varpi_{D,f}$ is proper. The last isomorphism is proved in \cite[Proposition 4.7.4]{Moc14}.
\end{proof}

\subsection{Moderate growth de Rham complexes along a function}
In the previous section, we have studied the moderate growth de Rham complex on real blow-ups $\XD$ and $\Xf$ in the case of a normal crossing divisor. To do this, we could directly apply the constructions performed in \cite{DK16}. 

If $X$ is a complex manifold and $f\colon X\to \C$ is a holomorphic function, the divisor $f^{-1}(0)$ does not need to have normal crossings. The blow-up space $\Xf$ can still be defined, but Corollary~\ref{cor:differentblowups} does not apply to this case. In this subsection, we define a version of the enhanced de Rham functor on $\Xf$ in order to prove an analogous statement without the normal crossing assumption. This works along the same lines as the version for $\XD$ in \cite{DK16} (simply denoted by $\wt{X}$ in loc.~cit.), but uses some interesting facts about resolutions of singularities.

Throughout this subsection, $X$ will be a complex manifold and $f\colon X\to \C$ a holomorphic function. We write $X^*\vcentcolon= X\setminus f^{-1}(0)$. We denote by $\Xf$ the real blow-up of $X$ along $f$. Recall the notation for morphisms from \eqref{eq:morphXf}.
We set $\cD_{\Xf}^\cA\vcentcolon= \Amodf\otimes_{\varpi_f^{-1}\cO_X} \varpi_f^{-1}\cD_X$. Let $\overline{X}$ denote the complex manifold conjugate to $X.$

The first step is to define an ind-sheaf of \emph{tempered holomorphic functions} on $\Xf$, by analogy with \cite[§7.2]{DK16}.

\begin{lemmadefi}\label{lemmadefi:OtXf}
We define the objects
$$\Dbt{\Xf}\vcentcolon=\varpi_f^! \RIhom(\C_{X^*}, \Dbt{X})$$
and
$$\cO^\mathrm{t}_{\Xf}\vcentcolon= \RR \cH om_{\varpi_f^{-1}\cD_{\overline{X}}}\big(\varpi_f^{-1}\cO_{\overline{X}},\Dbt{\Xf}\big),$$
and the latter is an object in $\mathrm{D}^\mathrm{b}(\beta\cD_{\Xf}^\cA)$.
\end{lemmadefi}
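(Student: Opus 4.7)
The plan is to adapt the construction of the ind-sheaf of tempered holomorphic functions on the real blow-up along a normal crossing divisor from \cite[§7.2]{DK16} to the present setting where the blow-up is taken along a single holomorphic function. The definition part of the statement is immediate, since $\varpi_f^!$, $\RIhom$, and $\RR\cH om$ are all defined operations on the relevant (ind-)derived categories; the content of the lemma is to exhibit on $\Ot{\Xf}$ a natural structure of an object of $\mathrm{D}^\mathrm{b}(\beta\cD_{\Xf}^\cA)$.

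First I would observe that $\Dbt{X}$ is naturally a module over $\cD_X \otimes_\C \cD_{\overline X}$, via differentiation and multiplication by holomorphic and antiholomorphic functions in the distributional sense. Both the functor $\varpi_f^!$ and the restriction functor $\RIhom(\C_{X^*}, -)$ respect this bimodule structure, so $\Dbt{\Xf}$ naturally carries an action of $\varpi_f^{-1}(\cD_X \otimes_\C \cD_{\overline X})$ in the derived category of ind-sheaves on $\Xf$. Applying $\RR\cH om_{\varpi_f^{-1}\cD_{\overline X}}(\varpi_f^{-1}\cO_{\overline X}, -)$ then extracts the $\bar\partial$-closed part, which eliminates the $\cD_{\overline X}$-action but preserves the commuting $\varpi_f^{-1}\cD_X$-action, so that $\Ot{\Xf}$ becomes an object of $\mathrm{D}^\mathrm{b}(\beta\varpi_f^{-1}\cD_X)$.

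The crucial step is to upgrade the $\varpi_f^{-1}\cO_X$-action to an action of $\beta\Amodf$. This rests on the key fact that sections of $\Amodf$---being smooth functions on $\Xf$, holomorphic on the interior $X^*$, and of moderate growth at $\partial\Xf$---act on tempered distributions by multiplication, producing again tempered distributions. This yields a natural morphism $\beta\Amodf \otimes_{\beta\varpi_f^{-1}\cO_X} \Dbt{\Xf} \to \Dbt{\Xf}$ in the derived category of ind-sheaves on $\Xf$. Since sections of $\Amodf$ are $\bar\partial$-closed, this action commutes with the antiholomorphic $\cD$-structure and therefore descends to $\Ot{\Xf}$; combined with the existing $\varpi_f^{-1}\cD_X$-action via the Leibniz rule, one obtains the desired $\beta\cD_{\Xf}^\cA$-module structure.

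The main obstacle is the careful verification of all compatibilities in the ind-sheaf formalism: the commutation of the moderate growth action with $\bar\partial$, and the Leibniz rule for the joint action of $\Amodf$ and $\varpi_f^{-1}\cD_X$. However, these verifications follow from the same arguments as in the normal crossing case treated in \cite[§7.2]{DK16}, since the underlying analytic fact---that moderate growth functions multiply tempered distributions---is purely local on $\Xf$ and independent of any normal crossing assumption on the divisor $f^{-1}(0)$. The flatness of $\Amodf$ over $\varpi_f^{-1}\cO_X$ recalled in Subsection~\ref{subsec:realblowup} moreover ensures that no derived subtleties arise in passing from $\cO_X$-modules to $\cA$-modules.
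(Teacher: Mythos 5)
There is a genuine gap in the crucial step. You propose to construct the action directly via a morphism $\beta\Amodf \otimes_{\beta\varpi_f^{-1}\cO_X} \Dbt{\Xf} \to \Dbt{\Xf}$ given by ``multiplication of tempered distributions by moderate growth functions,'' asserting that the verifications are the same as in the normal crossing case of \cite[\S 7.2]{DK16}. But this is precisely where the normal crossing hypothesis is used in an essential way in loc.~cit.: there, the object $\varpi_D^!\RIhom(\C_{X^*},\Dbt{X})$ is identified (via \cite[Theorem 7.2.7]{DK16}) with a concretely realized ind-sheaf of tempered distributions living on the \emph{total} real blow-up $\Xtot$, which is a genuine real analytic manifold, and only through that identification does ``multiplication by a function'' make sense as an operation in the derived category of ind-sheaves. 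For a general holomorphic $f$, no analogue of $\Xtot$ exists, and $\Dbt{\Xf}=\varpi_f^!\RIhom(\C_{X^*},\Dbt{X})$ is merely an abstract exceptional inverse image --- a priori a complex, not described by its sections --- so there is no direct way to let $\Amodf$ act on it. Your appeal to the locality of the underlying analytic fact does not bridge this: the fact is local, but the object on which you want to act has no local concrete description without further work.

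The paper's proof supplies exactly this missing ingredient by resolution of singularities. Locally one chooses a modification $\tau\colon Y\to X$ so that $E=\tau^{-1}(f^{-1}(0))$ has simple normal crossings, proves $\Ot{\Xf}\iso \RR\ttaug_*\Ot{\wt{Y}_E}$ (using base change, adjunction, and the tempered Grauert theorem), and transports the known $\cA^{\mathrm{mod}}_{\wt{Y}_E}$-module structure through the isomorphism $\RR\ttaug_*\cA^{\mathrm{mod}}_{\wt{Y}_E}\iso\Amodf$. One must then check that the resulting action is independent of the chosen resolution --- which is where the concrete description as multiplication of distributions by functions \emph{away from the divisor} enters --- in order to glue the locally defined actions into a global one. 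Your proposal skips both the reduction and the independence check, and the direct route you sketch does not go through as stated.
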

\begin{proof}

It is clear a priori that $\Dbt{\Xf}$ thus defined is an object over $\mathrm{I}(\varpi_f^{-1}\cD_X\otimes\varpi_f^{-1}\cD_{\overline{X}})$, and hence that $\cO^\mathrm{t}_{\Xf}$ is a module over $\varpi_f^{-1}\cD_X$. It thus remains to show that it is an $\Amodf$-module.

\begin{itemize} 
\item[(a)] Set $D\defeq f^{-1}(0)$. Assume that we can find a modification $\tau\colon Y\to X$ (i.e., a proper morphism such that $E\defeq \tau^{-1}(D)$ has simple normal crossings and induces an isomorphism $Y\setminus E\iso X\setminus D$) and set $g\defeq f\circ \tau$. Let us write $X^*\defeq X\setminus D$ and $Y^*\vcentcolon=Y\setminus E$. Let $\varpi_{E,g} \colon \wt Y_E \to \wt Y_g$ denote the natural proper morphism between the two real blow-ups on $Y$ associated to $E=g^{-1}(0)$, denote by $\wt{\tau}\colon\wt{Y}_g\to \Xf$ the map induced by $\tau$, and let $\wt \tau_g = \wt \tau \circ \varpi_{E,g}\colon \wt Y_E \to \Xf$ denote the composition. We have the following commutative diagram, where the square is Cartesian:
\begin{equation}\label{eq:diagramResSing}
    \begin{tikzcd}
\wt{Y}_E\arrow{rr}{\varpi_{E,g}}\arrow[bend left]{rrrr}{\wt{\tau}_g}\arrow{rrdd}[swap]{\varpi_E} && \wt{Y}_g\arrow{rr}{\wt{\tau}}\arrow{dd}{\varpi_g}\arrow[phantom, ddrr, "\square"] && \Xf\arrow{dd}{\varpi_f} \\ \\
&& Y\arrow{rr}{\tau} && X
\end{tikzcd}
\end{equation}
Since $E$ is a normal crossing divisor, we know from \cite[Notation 7.2.4 and Theorem 7.2.7]{DK16} that
$$\Ot{\wt{Y}_E}\iso \varpi_E^!\RIhom(\C_{Y^*},\Ot{Y})$$
is an $\cA_{\wt{Y}_E}^\mathrm{mod}$-module. 

Moreover, we have isomorphisms
\begin{align*}
    \RR  \ttaug_* \Ot{\wt{Y}_E} &\iso \RR \ttau_* \RR{\varpi_{E,g}}_* \RIhom(\C_{Y^*},\varpi_E^!\Ot{Y})\\
    &\iso \RR \ttau_* \RR{\varpi_{E,g}}_* \RIhom(\C_{Y^*},\varpi_{E,g}^!\varpi_g^!\Ot{Y})\\
    &\iso \RR \ttau_* \RIhom(\RR{\varpi_{E,g}}_*\C_{Y^*},\varpi_g^!\Ot{Y})\\
    &\iso \RR \ttau_* \RIhom(\C_{Y^*},\varpi_g^!\Ot{Y})\\
    &\iso \RIhom(\C_{X^*},\RR \ttau_* \varpi_g^!\Ot{Y})\\
    &\iso \RIhom(\C_{X^*},\RR \varpi_f^!\tau_*\Ot{Y})\\
    &\iso \varpi_f^!\RIhom(\C_{X^*},\RR \tau_*\Ot{Y})\\
    &\iso \varpi_f^!\RIhom(\C_{X^*},\Ot{X})\iso \Ot{\Xf},
\end{align*}
Here, we repeatedly use adjunction isomorphisms like \cite[Proposition 5.3.8, Corollary 5.3.5]{KS01} (together with the fact that all the morphisms in \eqref{eq:diagramResSing} are isomorphisms outside the given divisors). The sixth isomorphism uses the \cite[Theorem 5.3.10]{KS01}, relying on the fact that the square in \eqref{eq:diagramResSing} is Cartesian. The last line follows from the tempered Grauert theorem \cite[Theorem 3.1.5]{KS16}.

Consequently, $\Ot{\Xf}$ is a module (or, in general, a complex of modules) over $\ttaug_* \cA_{\wt{Y}_E}^\mathrm{mod}\iso \RR \ttaug_* \cA_{\wt{Y}_E}^\mathrm{mod}\iso \Amodf$. This isomorphism follows from \cite[Theorem 4.1.5]{Moc14}. (Let us be very precise here, emphasizing the functor $\beta$ that is usually suppressed in the notation: Taking the direct image, it becomes a module over $\ttaug_* \beta \cA_{\wt{Y}_E}^\mathrm{mod}$, and this induces a $\beta \ttaug_*\cA_{\wt{Y}_E}^\mathrm{mod}$-module structure by the natural morphism $\beta \ttaug_*\to \ttaug_* \beta$, cf.\ \cite[Proposition 4.3.17]{KS01}, noting that $\ttaug$ is proper.)

\item[(b)]
Let us show that the action of $\Amodf$ on $\Ot{\Xf}$ constructed in (a) is canonical and does not depend on the choice of the modification $\tau\colon Y\to X$.

First, we note that, similarly to the computation in (a), one obtains
\begin{align*}
\wt \tau_g^! \Dbt{\Xf} &= \wt \tau_g^! \varpi_f^!\RIhom(\C_{X^*},\Dbt{X}) \\
&\iso \varpi_E^! \tau^! \RIhom(\C_{X^*},\Dbt{X}) \\
&\iso \varpi_E^! \RIhom(\C_{Y^*},\Dbt{Y}) \\
&\iso \Dbt{\wt Y_E}.
\end{align*}
The second-to-last isomorphism is due to \cite[Lemma 2.5.7]{KS16}, and the last isomorphism follows from \cite[Theorem 7.2.7]{DK16} (since $E$ is a normal crossing divisor).
Consequently, one has an isomorphism $\Dbt{\wt{Y}_E}\iso \wt{\tau}_g^! \RR\ttaug_* \Dbt{\wt{Y}_E}$.

Now, observe that
\begin{align*}
    \Ot{\Xf}&\iso \RR\ttaug_*\Ot{\wt{Y}_E}\iso \RR\ttaug_*\RR\cH om_{\varpi_E^{-1}\cD_{\overline{Y}}}(\varpi_E^{-1}\cO_{\overline{Y}},\Dbt{\wt{Y}_E})\\
    &\iso \RR\ttaug_*\RR\cH om_{\varpi_E^{-1}\cD_{\overline{Y}}}(\varpi_E^{-1}\cO_{\overline{Y}},\wt{\tau}_g^!\RR\ttaug_*\Dbt{\wt{Y}_E})\\
    &\iso \RR\cH om_{\ttaug_*\varpi_E^{-1}\cD_{\overline{Y}}}(\RR\ttaug_*\varpi_E^{-1}\cO_{\overline{Y}},\RR\ttaug_*\Dbt{\wt{Y}_E}),
\end{align*}
showing that the $\Amodf$-action on $\Ot{\Xf}$ is induced by the $\Amodf$-action on $\RR\ttaug_*\Dbt{\wt{Y}_E}$, which in turn is induced by the $\cA^{\mathrm{mod}}_{\wt{Y}_E}$-action on $\Dbt{\wt{Y}_E}$. We remark that $\RR\ttaug_*\Dbt{\wt{Y}_E}=\ttaug_*\Dbt{\wt{Y}_E}$ since tempered distributions form a quasi-injective object (cf.\ \cite[Theorem 3.18]{Kas84}, \cite[§7.2]{KS01}).

In fact, the action of $\Amodf\iso \ttaug_*\cA^{\mathrm{mod}}_{\wt{Y}_E}$ on $\ttaug_*\Dbt{\wt{Y}_E}$ does not depend on the choice of the projective morphism $\tau$ above: Let $U\subset \Xf$ be a subanalytic subset. A local section of $\Amodf$ on $U$ is a holomorphic function, say $\varphi$, on $U\cap X^*=\tau^{-1}(U)\cap Y^*$ that has tempered growth along $U\cap \partial \Xf$ (or along $\tau^{-1}(U)\cap \partial \widetilde{Y}_E$, which is equivalent, since $\ttaug_*\cA^\mathrm{mod}_{\widetilde{Y}_E}\iso \Amodf$). On the other hand, we have $(\ttaug_*\Dbt{\wt{Y}_E})(U)=\Dbt{\widetilde{Y}_E}(\widetilde{\tau}_g^{-1}(U))$, so an element $\delta$ thereof is by definition a (tempered) distribution on $\widetilde{\tau}_g^{-1}(U)\cap Y^*=U\cap X^*$. The action of $\varphi$ on $\delta$ is given by the natural multiplication of a distribution by a function, all defined outside the divisors. Hence, the induced action is independent of the choice of $\tau$.
\item[(c)] In general, a modification similar to that in (a) exists by now well-known results on desingularization of analytic spaces (see, e.g., \cite{AHV18}). However, $E$ might not have smooth components, but we know that, locally around each point $x\in X$, we can find such a modification. Therefore, (a) shows that $\Ot{\Xf}$ has locally an action of $\Amodf$. Since the local actions are canonical (and hence compatible) by (b), they give a global action of $\Amodf$ on $\Ot{\Xf}$.
\end{itemize}
\end{proof}

We can now define (using notation similar to the one in \cite[§9.2]{DK16})
\begin{align*}
    \cO_{\Xf}^\EE&\vcentcolon= \tilde{i}^!\RR\cH om_{\cD_\PP}(\cE^\tau_{\C|\PP}, \cO_{\Xf\times \PP}^\mathrm{t})[2],\\
   \Omega_{\Xf}^\EE &\vcentcolon= \pi_{\Xf}^{-1}\varpi_f^{-1}\Omega_X\otimes^\LL_{\pi_{\Xf}^{-1}\varpi_f^{-1}\cO_X} \cO_{\Xf}^\EE
\end{align*}

and one sets
$$\DREbuf(\cL)\vcentcolon=\Omega_{\Xf}^\EE\otimes^\LL_{\cD_{\Xf}^\cA}\cL \qquad \textnormal{for $\cL\in\mathrm{D}^\mathrm{b}(\cD_{\Xf}^\cA)$}.$$
As in loc.~cit.\ we then get
$$\DREbuf(\cM^{\cA_f})\iso \EE\varpi_f^!\DRE(\cM(*D))$$
for any $\cM \in \DbholD{X}$ (setting $\cM^{\cA_f}\vcentcolon=\Amodf\otimes_{\varpi_f^{-1}\cO_X}\varpi_f^{-1}\cM$).

Then we can reproduce the proof of Lemma~\ref{lemmaShDR} and Proposition~\ref{prop:DRmod} along the exact same lines and obtain the following.
\begin{prop}\label{prop:DRmodf}
	Let $X$ be a complex manifold, $f\colon X\to \C$ a holomorphic function and $\cM\in\DbholD{X}$. Then there is an isomorphism in $\DbC{\Xf}$
	$$\shbuf\big(\EE \fj_*\EE j^{-1} \DRE(\cM)\big)\iso \DRmodf(\cM).$$
\end{prop}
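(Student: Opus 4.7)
The strategy is to follow the authors' own hint: reproduce the arguments of Lemma~\ref{lemmaShDR} and Proposition~\ref{prop:DRmod} line by line, with $\XD$ replaced by $\Xf$ throughout. All formal steps (adjunctions, tensor-Hom, compatibility of $\mathrm{R}\mathcal{I}hom$ with $\varpi_f^!$) are valid in the new setting; the only thing that needs checking is that the basic compatibility $\RHomE(\C^\EE_{\Xf},\cO^\EE_{\Xf})\iso\Amodf$ survives the more general construction of $\Ot{\Xf}$ and $\cO^\EE_{\Xf}$ given in Lemma-Definition~\ref{lemmadefi:OtXf}.

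First I would prove the analogue of Lemma~\ref{lemmaShDR}: for $\cN\in\mathrm{D}^\mathrm{b}(\cD^\cA_{\Xf})$,
\[
\shbuf\DREbuf(\cN)\iso \DRbuf(\cN).
\]
As in the proof of Lemma~\ref{lemmaShDR}, applying $\RHomE(\C^\EE_{\Xf},-)$ to $\DREbuf(\cN)=\Omega^\EE_{\Xf}\Ltens_{\cD^\cA_{\Xf}}\cN$ and using \cite[Theorem~5.6.1(ii)]{KS01} pulls the tensor product outside; the problem reduces to
\[
\RHomE\bigl(\C^\EE_{\Xf},\Omega^\EE_{\Xf}\bigr)\iso \varpi_f^{-1}\Omega_X\otimes_{\varpi_f^{-1}\cO_X}\Amodf,
\]
which by the projection formula of \cite[Lemma~4.10.3]{DK16} further reduces to $\RHomE(\C^\EE_{\Xf},\cO^\EE_{\Xf})\iso\Amodf$. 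The latter is proved by the same chain of adjunctions as in Lemma~\ref{lemmaShDR}, starting from $\cO^\EE_{\Xf}\iso \RIhom(\wt\varpi_f^{-1}\pi^{-1}\C_{X^*},\EE\varpi_f^!\cO^\EE_X)$ (which is immediate from Definition~\ref{lemmadefi:OtXf} together with the definition of $\cO^\EE_{\Xf}$ and the compatibility of $\tilde{i}^!\RR\cH om_{\cD_\PP}(\cE^\tau_{\C|\PP},-)$ with $\RIhom$ and $\varpi_f^!$) and ending in $\cO_X(*D)^{\cA_f}\iso\Amodf$, where $D=f^{-1}(0)$.

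With this lemma in hand, the proof of the proposition is a literal copy of the computation following Lemma~\ref{lemmaShDR}:
\begin{align*}
\shbuf\bigl(\EE\fj_*\EE j^{-1}\DRE(\cM)\bigr)
&\iso \shbuf\bigl(\EE\fj_*\EE j^!\DRE(\cM)\bigr)\\
&\iso \shbuf\bigl(\EE\fj_*\EE\tilde{j}_f^!\EE\varpi_f^!\DRE(\cM)\bigr)\\
&\iso \shbuf\RIhom\bigl(\wt\varpi_f^{-1}\pi^{-1}\C_{X^*},\EE\varpi_f^!\DRE(\cM)\bigr)\\
&\iso \shbuf\EE\varpi_f^!\RIhom\bigl(\pi^{-1}\C_{X^*},\DRE(\cM)\bigr)\\
&\iso \shbuf\DREbuf(\cM^{\cA_f})\\
&\iso \DRbuf(\cM^{\cA_f})=\DRmodf(\cM),
\end{align*}
using \cite[Lemma~2.7.6]{DK19}, the compatibility of $\mathrm{R}\mathcal{I}hom$ with $\varpi_f^!$, the identity $\DREbuf(\cM^{\cA_f})\iso\EE\varpi_f^!\DRE(\cM(*D))$ established just before the proposition, and finally the analogue of Lemma~\ref{lemmaShDR} proved above.

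The main subtlety will be the very first step, i.e.\ verifying $\RHomE(\C^\EE_{\Xf},\cO^\EE_{\Xf})\iso\Amodf$. In the normal-crossing setting this used the description $\SolE(\cO_X(*D))\iso \pi^{-1}\C_{X^*}\otimes\C^\EE_X$ combined with the formula for $\cO^\EE_{\XD}$ from \cite{DK16}. For arbitrary $f$ the same identity holds, but the $\Amodf$-module structure on $\cO^\EE_{\Xf}$ is only constructed indirectly in Lemma-Definition~\ref{lemmadefi:OtXf} via a local resolution of singularities $\tau\colon Y\to X$ and the canonicity argument there; one therefore needs to check that the isomorphism above is compatible with this descended action. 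Fortunately, by step~(b) of the proof of Lemma-Definition~\ref{lemmadefi:OtXf}, the action is characterized intrinsically (multiplication of tempered distributions by moderate-growth holomorphic functions on $X^*$), so the compatibility is automatic and no genuinely new argument is needed.
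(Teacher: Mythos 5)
Your proposal is correct and follows essentially the same route as the paper: the paper's own proof likewise consists of rerunning Lemma~\ref{lemmaShDR} and Proposition~\ref{prop:DRmod} verbatim on $\Xf$, with the only substantive inputs being $\cO^\EE_{\Xf}\iso\RIhom(\wt{\varpi}_f^{-1}\pi^{-1}\C_{X^*},\EE\varpi_f^!\cO^\EE_X)$, the analogous statement for $\SolE$, and the identification $\alpha_{\Xf}\Ot{\Xf}\iso\Amodf$, the last of which the paper obtains (as you do in spirit) from the local resolution-of-singularities construction in Lemma-Definition~\ref{lemmadefi:OtXf} together with the compatibility of $\alpha$ with proper direct images and the known normal-crossing case. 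Your additional remark on checking that the descended $\Amodf$-action is compatible with these isomorphisms, via the canonicity in step~(b), is exactly the right point to flag and is consistent with the paper's treatment.
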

\begin{proof}
The main steps of this proof are the isomorphisms 
$$\cO^\EE_{\Xf}\iso\RIhom(\wt{\varpi}_f^{-1}\pi^{-1}\C_{X^*},\EE\varpi_f^!\cO^\EE_X)$$
and
$$\EE\varpi_f^{-1}\SolE\big(\cO_X(*D)\big)\iso\mathcal{S} ol^\EE_{\Xf}\big(\cO_X(*D)^{\cA_f}\big),$$
which we both derive from Lemma-Definition~\ref{lemmadefi:OtXf} as in the case of $\XD$ in \cite{DK16}, and the isomorphism
$$\alpha_{\Xf}\Ot{\Xf}\iso\Amodf,$$
and this follows again from a local consideration as in part (a) of the above proof since $\alpha_{\Xf}$ is compatible with direct images (see \cite[Proposition 4.3.6]{KS01}) and we know such a statement for the blow-up $\wt{Y}_E$ along a normal crossing divisor (see \cite[Proposition 7.2.10]{DK16}).
\end{proof}

\subsection{Rapid decay de Rham complexes: a motivation in dimension one}\label{subsec:rapiddecay}
So far we have studied moderate growth de Rham complexes. The aim is now to find a functorial way to extract the rapid decay de Rham complex from the enhanced de Rham complex. Rapid decay functions do not seem to have been studied in the context of enhances ind-sheaves.

Studying the one-dimensional case again, it is quite straightforward to find an expression similar to the one for the moderate growth de Rham complex. Let $X$ be a complex manifold. Recall the notation from Section~\ref{sec:motivation}.

\begin{lemma}\label{lemmaTriangle}For an object $K\in\EbIC{X}$, one has a distinguished triangle in $\EbIC{\wt{X}}$
	$$\EE \tj_{!!}\EE j^{-1} K \To \EE \tj_*\EE j^{-1} K\To \EE \tilde i_{!!}\EE \tilde i^{-1}\EE \tj_*\EE j^{-1} K\ToPO$$
\end{lemma}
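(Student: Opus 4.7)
The plan is straightforward: apply the standard excision distinguished triangle for enhanced ind-sheaves on $\wt{X}$ to the specific object $F \vcentcolon= \EE \tj_*\EE j^{-1} K$, then simplify using that $\tj$ is an open embedding of bordered spaces.

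Concretely, the complementary decomposition of $\wt{X}$ into the closed subspace $\partial\wt{X}$ and the open subspace $X^*$ produces, via the six-operation formalism of \cite{DK16}, a functorial distinguished triangle
$$\EE \tj_{!!}\EE \tj^{-1} F\To F\To \EE \tilde i_{!!}\EE \tilde i^{-1} F\ToPO$$
for every $F\in \EbIC{\wt{X}}$. This is the enhanced analogue of the classical excision triangle of \cite[§2.3]{KS90}, and it follows formally from the adjunctions $(\EE \tj_{!!},\EE \tj^{-1})$ and $(\EE \tilde i^{-1},\EE \tilde i_{!!})$ (noting that $\EE \tilde i_{!!}\iso \EE \tilde i_*$, since $\tilde i$ is a closed inclusion) together with the vanishings $\EE \tilde i^{-1}\EE \tj_{!!}=0$ and $\EE \tj^{-1}\EE \tilde i_{!!}=0$ coming from disjointness of supports.

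Setting $F\vcentcolon=\EE \tj_*\EE j^{-1} K$, the first term of the triangle becomes $\EE \tj_{!!}\EE \tj^{-1}\EE \tj_*\EE j^{-1} K$. Since $\tj$ is an open (hence locally closed) embedding of bordered spaces, the counit $\EE \tj^{-1}\EE \tj_*\to\id$ is an isomorphism, so the first term simplifies to $\EE \tj_{!!}\EE j^{-1} K$ and we obtain the claimed triangle. There is really no obstacle to overcome: the statement is a formal consequence of the six-functor machinery of \cite{DK16}, and the only care required is to track the bordered-space structure, namely that $X^*_\infty$ is viewed as a bordered subspace of $\wt{X}$ via $\tj$ as in diagrams~\eqref{eq:morphXD} and~\eqref{eq:morphXf}.
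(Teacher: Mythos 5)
Your proof is correct and takes essentially the same route as the paper's: both apply the open--closed excision triangle for the decomposition $\wt{X}=(X\setminus D)\sqcup\partial\wt{X}$ to $F=\EE \tj_*\EE j^{-1}K$ and then simplify the first term via $\EE \tj^{-1}\EE \tj_*\iso\id$. The only cosmetic difference is that the paper justifies the excision triangle concretely, by tensoring the short exact sequence $0\to\C_{X\setminus D}\to\C_{\wt{X}}\to\C_{\partial\wt{X}}\to 0$ with $F$ using the identities $\EE \tj_{!!}\EE \tj^{-1}H\iso\pi^{-1}\C_{X\setminus D}\otimes H$ and $\EE \tilde i_{!!}\EE \tilde i^{-1}H\iso\pi^{-1}\C_{\partial\wt{X}}\otimes H$ of \cite[Lemma 2.7.6]{DK19}, rather than invoking it as a formal recollement consequence of the adjunctions.
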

\begin{proof}
	By \cite[Lemma 2.7.6]{DK19}, we know that there are isomorphisms
	\begin{align*}
		\EE \tj_{!!}\EE \tj^{-1} H &\iso \pi^{-1}\C_{X\setminus D}\otimes H\\
		\EE \tilde i_{!!}\EE \tilde i^{-1} H &\iso \pi^{-1}\C_{\partial\wt{X}}\otimes H
	\end{align*}
	for any $H\in\EbIC{\wt{X}}$.
	
	From the natural short exact sequence (cf.\ \cite[Proposition 2.3.6(v)]{KS90})
	$$0\To \C_{X\setminus D}\To \C_{\wt{X}}\To \C_{\partial\wt{X}}\To 0$$
	in $\DbC{\wt{X}}$, we therefore obtain (applying the functor $\pi^{-1}(\bullet)\otimes \EE \tj_*\EE j^{-1} K$) a distinguished triangle
	$$\EE \tj_{!!}\EE \tj^{-1} \EE \tj_* \EE j^{-1} K\To \EE \tj_*\EE j^{-1}K\To \EE \tilde i_{!!}\EE \tilde i^{-1} \EE \tj_* \EE j^{-1} K\ToPO$$
	and this is the desired triangle, noting that $\EE j^{-1} \EE j_*\iso \EE j^! \EE j_* \iso \id$.
\end{proof}

When $K$ additionally is a \emph{perverse} enhanced ind-sheaf (that is, an element of the essential image of $\mathrm{Mod}_{hol}(\cD_X)$ under the enhanced de Rham functor; see \cite[§4]{DK20cycles} for more detail on this notion---we will not need it elsewhere in this paper), we can make the following, more specific, claim.

\begin{prop}\label{prop:ExactSeq}
Let $K\in\EbIC{X}$ be a perverse enhanced ind-sheaf, then there is a short exact sequence
$$0\To \tilde i^{-1}\shbu(\EE\tj_{!!}\EE j^{-1}K)[-1] \To \Psi^{\leq 0}_a(K) \To \Psi^0_a(K)\To 0.$$
\end{prop}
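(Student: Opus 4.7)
The plan is to deduce the short exact sequence directly from the distinguished triangle in Lemma~\ref{lemmaTriangle} by applying the functor $\tilde i^{-1}\shbu(\bullet)[-1]$ to it, and then to use the perversity assumption on $K$ to collapse the resulting long exact sequence of cohomology sheaves into a short exact one.

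First, I would apply the sheafification functor $\shbu\colon\EbIC{\wt X}\to\DbC{\wt X}$ to the triangle
$$\EE \tj_{!!}\EE j^{-1} K \To \EE \tj_*\EE j^{-1} K\To \EE \tilde i_{!!}\EE \tilde i^{-1}\EE \tj_*\EE j^{-1} K\ToPO,$$
and then apply $\tilde i^{-1}[-1]$. Both functors are triangulated, so we obtain a distinguished triangle in $\DbC{\partial\wt X}$. By the definition of moderate growth nearby cycles (cf.\ the formula $\Psi_a^{\leq 0}(K)\vcentcolon= \tilde i^{-1}\shbu(\EE\tj_*\EE j^{-1}K)[-1]$ motivated by Proposition~\ref{prop:DRmod}), the middle term is $\Psi_a^{\leq 0}(K)$. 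For the third term, I would use that $\shbu$ commutes with $\EE\tilde i_{!!}\iso \EE\tilde i_*$ (since $\tilde i$ is a closed embedding, so that one has $\shbu\circ\EE\tilde i_{!!}\iso \tilde i_*\circ\mathsf{sh}_{\partial\wt X}$, cf.\ \cite[Lemma~3.9]{DK21sh}) together with $\tilde i^{-1}\tilde i_*\iso \id$; this identifies the third term with $\mathsf{sh}_{\partial\wt X}(\EE\tilde i^{-1}\EE\tj_*\EE j^{-1}K)[-1]=\Psi_a^0(K)$.

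Having identified the terms, the distinguished triangle reads
$$\tilde i^{-1}\shbu(\EE\tj_{!!}\EE j^{-1}K)[-1]\To \Psi_a^{\leq 0}(K)\To \Psi_a^0(K)\ToPO$$
in $\DbC{\partial\wt X}$. To upgrade it to a short exact sequence of sheaves, I would show that each of the three terms is concentrated in cohomological degree $0$; the long exact sequence of cohomology sheaves then automatically yields the desired short exact sequence (the connecting morphism lands in degree $1$ of the first term, which vanishes).

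The main obstacle is this last concentration statement, and it is where the hypothesis that $K$ is perverse becomes essential. The idea is that, for $K=\DRE(\cM)$ with $\cM$ holonomic, the objects $\tilde i^{-1}\shbu(\EE\tj_*\EE j^{-1}K)$ and $\tilde i^{-1}\shbu(\EE\tj_{!!}\EE j^{-1}K)$ compute (shifted) moderate growth and rapid decay de Rham complexes on $\partial\wt X$, which are perverse sheaves (in the convention of \cite{Sab13} followed in Definition~\ref{def:modDRsabbah}), and whose restriction to a boundary stratum of real dimension one less picks up the expected shift $[-1]$. I would invoke the known $t$-exactness of the moderate growth and rapid decay nearby cycles functors on the heart of the perverse $t$-structure (this is classical for regular holonomic modules and is due to Mochizuki and Sabbah in the irregular case, cf.\ Section~6 of \cite{Sab21}), from which the concentration of $\Psi_a^{\leq 0}(K)$ and $\Psi_a^0(K)$ in degree $0$ follows, and then deduce the concentration of the remaining term from the triangle itself.
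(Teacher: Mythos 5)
Your reduction of the statement to the distinguished triangle of Lemma~\ref{lemmaTriangle}, followed by the identification of the middle and right-hand terms with $\Psi_a^{\leq 0}(K)$ and $\Psi_a^0(K)$ (via $\shbu\circ\EE\tilde i_{!!}\iso\tilde i_!\circ\sh{\partial\wt{X}}$ and $\tilde i^{-1}\tilde i_!\iso\id$), matches the paper's proof exactly. The gap is in the final step. Knowing that the second and third terms of a distinguished triangle $A\To B\To C\ToPO$ are concentrated in degree $0$ does \emph{not} let you ``deduce the concentration of the remaining term from the triangle itself'': the long exact sequence of cohomology sheaves only forces $A$ to live in degrees $0$ and $1$, with $\mathrm{H}^1(A)\iso\mathrm{coker}\bigl(\mathrm{H}^0(B)\to\mathrm{H}^0(C)\bigr)$. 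The missing ingredient, which the paper supplies, is that the morphism $\Psi_a^{\leq 0}(K)\to\Psi_a^{0}(K)$ is an \emph{epimorphism} of sheaves (this is \cite[Lemma~4.3]{DK20cycles}, which is also the source for the degree-$0$ concentration of these two objects); only then does the cokernel vanish and the triangle collapse to the stated short exact sequence.

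Your proposed alternative --- invoking $t$-exactness of the rapid decay nearby cycles functor for holonomic $\cD$-modules to handle the first term directly --- is circular at this point in the paper: the identification of $\tilde i^{-1}\shbu(\EE\tj_{!!}\EE j^{-1}\DRE(\cM))[-1]$ with the $\cD$-module-theoretic rapid decay nearby cycles is precisely what Proposition~\ref{prop:ExactSeq} is being used to \emph{motivate} (see the discussion leading to \eqref{eq:ideaDRrd}), and it is only established much later (Corollary~\ref{cor:DRrdXD}). So you cannot assume that the first term of the triangle computes an object for which $t$-exactness is already known. The correct repair is to replace both of these moves by the epimorphism statement of \cite[Lemma~4.3]{DK20cycles}.
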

\begin{proof}
We apply the functor $\tilde i^{-1}\shbu(\bullet)$ to the distinguished triangle from Lemma \ref{lemmaTriangle}. Then the middle object becomes $\Psi^{\leq 0}_a(K)$. Moreover, the third object becomes $\Psi^0_a(K)$, noting that $\shbu\circ \EE \tilde i_{!!}\iso \tilde i_! \circ \sh{\partial \wt{X}}$ since $\tilde i$ is proper (see \cite[Lemma 3.9(ii)]{DK21sh}) and $\tilde i^{-1}\tilde i_!\iso \id$.

The distinguished triangle thus obtained is indeed a short exact sequence since we already know that the second and third objects are concentrated in degree zero and the morphism between them is an epimorphism (see \cite[Lemma 4.3]{DK20cycles}), which implies that also the first object must be concentrated in degree $0$.
\end{proof}

Now let $\cM$ a meromorphic connection with a pole at $0$, and let $K=\DRE(\cM)$ and $\cL_\cM = \tilde i^{-1}\tj_*j^{-1}\DR(\cM)[-1]$. Then the objects in the short exact sequence of Proposition~\ref{prop:ExactSeq} are related to the Stokes filtration by
\begin{align*}
\Psi^{\leq 0}_a(K)&\iso \mathrm{F}_{\leq 0}\cL_{\cM},\\
\Psi^0_a(K) &\iso \mathrm{Gr}^\mathrm{F}_0 \cL_{\cM}=\mathrm{F}_{\leq 0}\cL_\cM / \mathrm{F}_{<0}\cL_\cM.
\end{align*}
Therefore, this implies that there is an isomorphism
$$\mathrm{F}_{<0}\cL_{\cM} \iso \tilde i^{-1}\shbu(\EE\tj_{!!}\EE j^{-1}K)[-1].$$
Since the ``$< 0$'' part of the Stokes filtration is nothing but the rapid decay de Rham complex, restricted to $\partial \wt{X}$, this implies that it is reasonable to expect an isomorphism
\begin{equation}\label{eq:ideaDRrd}
\DRrd(\cM)\iso \shbu\big(\EE\tj_{!!}\EE j^{-1}\DRE(\cM)\big),
\end{equation}
a formula similar to the ones proved for moderate growth de Rham complexes above.

\begin{rem}
Note that the functor $\shbu \circ \EE \tj_{!!}\not\simeq \tj_!\circ \sh{X^*_\infty}$, in particular this functor is not an ``extension by zero'' as known from the theory of classical sheaves.
\end{rem}
 
A proof of this formula \eqref{eq:ideaDRrd} in the case that $f$ defines a normal crossing divisor (in any dimension) will be given in Section~\ref{sec:TWduality} (see Corollary~\ref{cor:DRrdXD}), using deep results on duality of certain sheaves of functions.

\section{Moderate growth and rapid decay objects associated to enhanced ind-sheaves}\label{sec:enhmodrd}

In Section~\ref{sec:DRmodandrd}, we have studied functorial ways to extract the moderate growth and rapid decay de Rham complexes from the enhanced de Rham complex. Inspired by these considerations, we define here moderate growth and rapid decay objects (more precisely, sheaves on the real blow-up space) to any enhanced ind-sheaf on $X$ and establish some of their basic properties.

Throughout this section, let $X$ be a complex manifold and let $f\colon X\to \C$ be  holomorphic function. Since we will only work on the blow-up $\Xf$ from here on, we will simplify the notation for the morphisms from diagram \eqref{eq:morphXf} by suppressing the index $f$ as follows:
\begin{equation*}
\begin{tikzcd}\partial\Xf\arrow[hook, r, "\tilde i"] & \Xf\arrow[bend left=40,rr,"\varpi"]&X^*_\infty\arrow[l,"\tj",swap,hook']\arrow[r,hook,"j"] & X\end{tikzcd}
\end{equation*}
We will also work over an arbitrary field $k$ here.

\begin{defi}\label{def:enhmodrd}
For an enhanced ind-sheaf $K\in\EbIk{X}$, we define the objects of $\Dbk{\Xf}$
\begin{align}
    K^{\mathrm{mod}\, f}&\defeq \shbuf\big(\EE \tj_*\EE j^{-1} K\big),\\
    K^{\mathrm{rd}\, f}&\defeq \shbuf\big(\EE \tj_{!!}\EE j^{-1} K\big).
\end{align}
\end{defi}

\begin{rem}
Let us note here that we could also perform the same construction on the real blow-up $\XD$ along a normal crossing divisor and in this way define objects $K^{\mathrm{mod}\, D}$ and $K^{\mathrm{rd}\, D}$. We will, however, focus on the blow-up along a function from here on, although the constructions and results we prove in the present and the following section work analogously on $\XD$. We will use this fact in the proofs of Proposition~\ref{prop:conjSabNCD} and Proposition~\ref{cor:ModRdpairing1} below.
\end{rem}

The first easy observation is the following.

\begin{lemma}
   There are isomorphisms for any $K\in\EbIk{X}$
    \begin{align*}
        K^{\mathrm{mod}\, f} &\iso (\pi^{-1}k_{X^*}\otimes K)^{\mathrm{mod}\, f} \iso \RIhom(\pi^{-1}k_{X^*},K)^{\mathrm{mod}\, f},\\
        K^{\mathrm{rd}\, f} &\iso (\pi^{-1}k_{X^*}\otimes K)^{\mathrm{rd}\, f} \iso \RIhom(\pi^{-1}k_{X^*},K)^{\mathrm{rd}\, f}.
    \end{align*}
\end{lemma}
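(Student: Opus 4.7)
The plan is to reduce everything to the observation that the three inputs $K$, $\pi^{-1}k_{X^*}\otimes K$, and $\RIhom(\pi^{-1}k_{X^*},K)$ all have the same restriction to the bordered space $X^*_\infty$ under $\EE j^{-1}$. Since the functors $(\bullet)^{\mathrm{mod}\, f}$ and $(\bullet)^{\mathrm{rd}\, f}$ only depend on $K$ through $\EE j^{-1}K$ (by their very definition, as they factor as $\shbuf\circ \EE\tj_\star\circ \EE j^{-1}$ with $\star\in\{\ast,!!\}$), this immediately yields all four claimed isomorphisms.

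To make this precise, I would first recall the two identifications
\[
\pi^{-1}k_{X^*}\otimes K \iso \EE j_{!!}\EE j^{-1}K,\qquad \RIhom(\pi^{-1}k_{X^*},K)\iso \EE j_{*}\EE j^{-1}K,
\]
where $j\colon X^*_\infty\hookrightarrow X$ is the open embedding of bordered spaces. The first one is a direct application of \cite[Lemma 2.7.6]{DK19} (already used above in the proof of Lemma \ref{lemmaTriangle}); the second one follows from the first by the $(\otimes,\RIhom)$-adjunction together with the $(\EE j_{!!},\EE j^{-1})$-adjunction and Yoneda, since for any $K'\in\EbIk{X}$
\[
\Hom(K',\RIhom(\pi^{-1}k_{X^*},K))\iso \Hom(\pi^{-1}k_{X^*}\otimes K',K)\iso \Hom(\EE j_{!!}\EE j^{-1}K',K)\iso \Hom(K',\EE j_*\EE j^{-1}K).
\]

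Next, I would invoke the standard fact that for an open embedding of bordered spaces one has $\EE j^{-1}\EE j_{!!}\iso \id$ and $\EE j^{-1}\EE j_{*}\iso \id$. Applying $\EE j^{-1}$ to each of the two identifications above therefore yields
\[
\EE j^{-1}(\pi^{-1}k_{X^*}\otimes K)\iso \EE j^{-1}K\iso \EE j^{-1}\RIhom(\pi^{-1}k_{X^*},K).
\]
Plugging this into the definitions $K^{\mathrm{mod}\, f}=\shbuf(\EE\tj_*\EE j^{-1}K)$ and $K^{\mathrm{rd}\, f}=\shbuf(\EE\tj_{!!}\EE j^{-1}K)$ gives the four isomorphisms of the lemma. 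There is no substantial obstacle here: once the two adjunction-type identifications are in place, the rest is a formal manipulation, and the lemma essentially records the fact that the growth-condition functors are insensitive to modifications of $K$ that do not change its restriction to $X^*_\infty$.
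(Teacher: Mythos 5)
Your proposal is correct and follows essentially the same route as the paper: both rest on the identifications $\pi^{-1}k_{X^*}\otimes K \iso \EE j_{!!}\EE j^{-1}K$ and $\RIhom(\pi^{-1}k_{X^*},K)\iso \EE j_{*}\EE j^{-1}K$ (the paper cites \cite[Lemma 2.7.6]{DK19} for both, while you derive the second from the first by adjunction, implicitly using $\EE j^!\iso\EE j^{-1}$ for the open embedding $j$), and then conclude via $\EE j^{-1}\EE j_{!!}\iso\id\iso\EE j^{-1}\EE j_*$. No gaps.
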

\begin{proof}
It follows from \cite[Lemma 2.7.6]{DK19} that we have
\begin{align*}
    \pi^{-1}k_{X^*}\otimes K \iso \EE j_{!!} \EE j^{-1} K,\\
    \RIhom(\pi^{-1}k_{X^*},K) \iso \EE j_* \EE j^{-1} K.
\end{align*}
(For the second isomorphism, note that $j$ is an open embedding and hence $j^!=j^{-1}$.) Then, the statement of the lemma is clear from the definition because $\EE j^{-1} \EE j_*=\id=\EE j^{-1} \EE j_{!!}$.
\end{proof}

\begin{rem}
In the context of enhanced de Rham functors (in particular, $k=\C$ here), the previous lemma tells us that the functors from Definition~\ref{def:enhmodrd} are not sensitive to localization of the $\cD_X$-module: If $D=f^{-1}(0)$ and $\cM\in\DbholD{X}$, then by \cite[Theorem 9.1.2]{DK16}, we know that
\begin{align*}
    \DRE\big(\cM(*D)\big) &\iso \RIhom\big(\pi^{-1}\C_{X^*},\DRE(\cM)\big),\\
    \DRE\big(\cM(!D)\big) &\iso \pi^{-1}\C_{X^*}\otimes\DRE(\cM)
\end{align*}
and hence the lemma means nothing but
\begin{align*}
        \big(\DRE(\cM)\big)^{\mathrm{mod}\,f}&\iso \big(\DRE(\cM(*D))\big)^{\mathrm{mod}\,f}\iso \big(\DRE(\cM(!D))\big)^{\mathrm{mod}\,f},\\
        \big(\DRE(\cM)\big)^{\mathrm{rd}\,f}&\iso \big(\DRE(\cM(*D))\big)^{\mathrm{rd}\,f}\iso \big(\DRE(\cM(!D))\big)^{\mathrm{rd}\,f}.
    \end{align*}
\end{rem}

By restricting the objects from Definition~\ref{def:enhmodrd} to the boundary of the real blow-up, we can define objects that will be called \emph{enhanced nearby cycles}; they are higher-dimensional analogues of the constructions in \cite{DK20cycles} as well as analogues in the world of enhanced ind-sheaves of the constructions performed for $\cD_X$-modules in \cite{Sab21}.

\begin{defi}\label{def:Cycles}
    Let $K\in \EbIk{X}$, then we set
    \begin{align*}
        \psi_f^{\leq 0} K &\vcentcolon= \tilde i^{-1}K^{\mathrm{mod}\, f}= \tilde i^{-1} \shbuf\big(\EE \tj_*\EE j^{-1} K\big)[-1],\\
        \psi_f^{< 0} K &\vcentcolon= \tilde i^{-1}K^{\mathrm{rd}\, f} = \tilde i^{-1} \shbuf\big(\EE \tj_{!!}\EE j^{-1} K\big)[-1],\\
        \psi_f^{> 0} K &\vcentcolon= \tilde i^{!}K^{\mathrm{mod}\, f} =\tilde i^! \shbuf\big(\EE \tj_*\EE j^{-1} K\big),\\
        \psi_f^{\geq 0} K &\vcentcolon= \tilde i^{!}K^{\mathrm{rd}\, f} = \tilde i^! \shbuf\big(\EE \tj_{!!}\EE j^{-1} K\big).
    \end{align*}
    Moreover, set
    $$ \psi_f^* K \vcentcolon = \tilde i^{-1} \RR \tj_* j^{-1} \sh{X} K[-1].$$
    All of them are objects in $\Dbk{\partial\Xf}$.
\end{defi}

\begin{rem}
One can express these objects differently in the case where $K$ is the enhanced de Rham or solution object of a meromorphic connection (and in particular $k=\C$).

For example, let $\cM$ be a meromorphic connection with poles along the normal crossing divisor $D=f^{-1}(0)$. Then
\begin{align*}
    \psi_f^{\leq 0}\DRE(\cM) &\cong \tilde i^{-1} \shbuf\big(\EE \tj_*\EE j^{-1} \DRE(\cM)\big)[-1]\\
    &\cong \tilde i^{-1} \shbuf\big(\EE \varpi^! \EE j_*\EE j^{-1} \DRE(\cM)\big)[-1]\\
    &\cong \tilde i^{-1} \shbuf\big(\EE \varpi^! \DRE(\cM)\big)[-1].
\end{align*}

Similarly, one can show $\psi_f^{>0}\DRE(\cM)\iso \tilde i^! \shbuf\big(\EE \varpi^! \DRE(\cM)\big)$, which is the analogue of the complex ${}^p\psi_f^{>\mathrm{mod}} \cM$ in \cite{Sab21}.

Along the same lines, we get $\psi_f^{<0}\SolE(\cM)\iso \tilde i^{-1} \shbuf\big(\EE \varpi^{-1} \SolE(\cM)\big)[-1]$ and $\psi_f^{\geq 0}\SolE(\cM)\iso \tilde i^! \shbuf\big(\EE \varpi^{-1} \SolE(\cM)\big)$.

\end{rem}

The following lemma is clear by construction.
\begin{lemma}\label{lemma:constructible}
    For $K\in\EbRcIk{X}$ (in particular for $K=\DRE(\cM)$ for a holonomic $\cD_X$-module in the case $k=\C$), the objects defined in Definition~\ref{def:Cycles} are $\R$-constructible.
\end{lemma}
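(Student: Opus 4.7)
The lemma is essentially a bookkeeping exercise: each functor appearing in the defining formulas of Definition~\ref{def:Cycles} is known to preserve $\R$-constructibility, so the proof should amount to chaining these stability results together. I would work through the five objects one at a time, but with a unified strategy.

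First, I would reduce everything to proving that the intermediate objects $K^{\mathrm{mod}\,f} = \shbuf(\EE\tj_*\EE j^{-1}K)$ and $K^{\mathrm{rd}\,f} = \shbuf(\EE\tj_{!!}\EE j^{-1}K)$ lie in $\DbRck{\Xf}$. Once this is established, the $\R$-constructibility of $\psi_f^{\leq 0}K$, $\psi_f^{<0}K$, $\psi_f^{>0}K$, $\psi_f^{\geq 0}K$ follows immediately, since $\tilde i^{-1}$ and $\tilde i^!$ preserve $\R$-constructibility for classical sheaves on real analytic manifolds (see \cite[Proposition~8.4.4]{KS90}, applied to the closed inclusion $\tilde i\colon \partial \Xf \hookrightarrow \Xf$, noting that $\Xf$ is subanalytic, being obtained by fiber product from $\wt{\C}_0$).

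The core step is therefore the stability of the two intermediate objects. The morphisms $j\colon X^*_\infty \hookrightarrow X$ and $\fj\colon X^*_\infty \hookrightarrow \Xf$ are morphisms of (sub)analytic bordered spaces, so by \cite[Proposition~4.9.2]{DK16} (stability of $\R$-constructibility for enhanced ind-sheaves under the six operations on morphisms of real analytic bordered spaces), the objects $\EE \tj_* \EE j^{-1} K$ and $\EE \tj_{!!} \EE j^{-1} K$ belong to $\EbRcIk{\Xf}$ whenever $K \in \EbRcIk{X}$. It then remains to observe that the sheafification functor $\shbuf$ sends $\R$-constructible enhanced ind-sheaves to $\R$-constructible classical sheaves; this is a consequence of the formula $\shbuf(H) \iso \RHomE(k^\EE_{\Xf}, H)$ on $\EbRcIk{\Xf}$, together with the constructibility results of \cite{DK21sh} (see in particular the discussion of the sheafification functor on $\R$-constructible objects therein, where stability under $\sh{}$ is proved).

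For $\psi_f^*K = \tilde i^{-1} \RR\tj_* j^{-1} \sh{X}K[-1]$ the argument is similar but purely within the classical setting: $\sh{X}K \in \DbRck{X}$ since $K$ is $\R$-constructible, and then $j^{-1}$, $\RR\tj_*$, $\tilde i^{-1}$ all preserve $\R$-constructibility (again by \cite[Proposition~8.4.4]{KS90}; for the direct image along the open bordered inclusion $\tj$, properness of $\varpi$ is what keeps us within $\R$-constructible complexes). I do not anticipate any genuine obstacle here; the only point requiring a brief citation check is the sheafification step, since the other stability results are completely standard. The parenthetical claim about $K = \DRE(\cM)$ for holonomic $\cM$ is then immediate from \cite[Theorem~9.5.3]{DK16}, which asserts $\DRE(\cM) \in \EbRcIC{X}$.
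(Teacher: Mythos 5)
Your proposal is correct and follows essentially the same route as the paper: the proof there is a one-line observation that $\EE\fj_*$, $\EE\fj_{!!}$, $\EE j^{-1}$ and $\shbuf$ all preserve $\R$-constructibility (citing \cite[Proposition 3.3.3]{DK19} and \cite[Theorem 6.6.4]{KS16}), which is exactly the chain of stability results you spell out in more detail. The only differences are cosmetic: your citations point to slightly different (but equivalent) sources for the stability of the six operations and of sheafification, and you treat the boundary restrictions and $\psi_f^*K$ explicitly, which the paper leaves implicit.
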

\begin{proof}
It suffices to note that the functors $\EE\fj_*$, $\EE \fj_{!!}$, $\EE j^{-1}$ and $\shbuf$ involved in the construction of $K^{\mathrm{mod}\, f}$ and $K^{\mathrm{rd}\, f}$ all preserve $\R$-constructibility (see \cite[Proposition 3.3.3]{DK19} and \cite[Theorem 6.6.4]{KS16}).
\end{proof}

\begin{prop}\label{prop:fundamentaltriangles}
We have the following natural distinguished triangles in the category $\Dbk{\partial \Xf}$:
$$\begin{tikzcd}
    \psi_f^{\leq 0}K\arrow[r,"v"]&\psi_f^* K\arrow{r} &\psi_f^{>0}K \arrow{r}{+1} & {}\\[-10pt]
    \psi_f^{< 0}K\arrow{r}& \psi_f^* K\arrow[r,"c"] & \psi_f^{\geq 0}K \arrow{r}{+1} & {}
\end{tikzcd}$$
\end{prop}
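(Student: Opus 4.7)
The plan is to derive both triangles from the standard attaching distinguished triangle on $\Xf$ associated to the closed embedding $\tilde i\colon \partial\Xf \hookrightarrow \Xf$ and its open complement $\tj$. For any $F \in \Dbk{\Xf}$ one has the classical triangle
$$
\tilde i_*\tilde i^! F \longrightarrow F \longrightarrow \RR\tj_*\tj^{-1}F \xrightarrow{+1}.
$$
Applying $\tilde i^{-1}$ (and using $\tilde i^{-1}\tilde i_* \iso \id$), then shifting by $[-1]$ and rotating the resulting triangle, I obtain
$$
\tilde i^{-1}F[-1] \longrightarrow \tilde i^{-1}\RR\tj_*\tj^{-1}F[-1] \longrightarrow \tilde i^! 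F \xrightarrow{+1}.
$$

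I will instantiate this triangle first with $F = \shbuf(\EE\tj_*\EE j^{-1}K)$, so that the outer terms are $\psi_f^{\leq 0}K$ and $\psi_f^{>0}K$ by definition, and then separately with $F' = \shbuf(\EE\tj_{!!}\EE j^{-1}K)$, where the outer terms read $\psi_f^{<0}K$ and $\psi_f^{\geq 0}K$. In either case, what remains is to identify the middle term $\tilde i^{-1}\RR\tj_*\tj^{-1}F[-1]$ with $\psi_f^*K = \tilde i^{-1}\RR\tj_* j^{-1}\sh{X}K[-1]$.

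For this identification I will invoke the compatibility of the sheafification functor with inverse image for morphisms of bordered spaces (cf.\ \cite{DK21sh}), which gives natural isomorphisms $\tj^{-1}\shbuf(-) \iso \sh{X^*_\infty}(\EE\tj^{-1}(-))$ and $j^{-1}\sh{X}(-) \iso \sh{X^*_\infty}(\EE j^{-1}(-))$. Combined with $\EE\tj^{-1}\EE\tj_* \iso \id \iso \EE\tj^{-1}\EE\tj_{!!}$ (which hold since $\tj$ is an open embedding of bordered spaces), this yields in both cases
$$
\tj^{-1}F \iso \sh{X^*_\infty}(\EE j^{-1}K) \iso j^{-1}\sh{X}K,
$$
from which $\tilde i^{-1}\RR\tj_*\tj^{-1}F[-1] \iso \psi_f^*K$ follows at once, giving both triangles.

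The only point requiring care is the sheafification–pullback compatibility in the form stated above, which must be taken from \cite{DK21sh}; once this is in hand, the remainder of the argument is a purely formal manipulation of the attaching triangle together with the vanishing of $\EE\tj^{-1}\EE\tj_*$ on its second factor, so I do not expect a serious obstacle.
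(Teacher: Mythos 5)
Your argument is correct and coincides with the paper's proof: both start from the attaching triangle for the closed embedding $\tilde i$ and its open complement $\tj$ applied to $\shbuf(\EE\tj_*\EE j^{-1}K)$ (resp.\ $\shbuf(\EE\tj_{!!}\EE j^{-1}K)$), apply $\tilde i^{-1}[-1]$ after rotation, and identify the middle term with $\psi_f^*K$ via the compatibility of sheafification with pullback along open embeddings from \cite[Lemma 3.9]{DK21sh} together with $\EE\tj^{-1}\EE\tj_*\iso\id\iso\EE\tj^{-1}\EE\tj_{!!}$. No gaps.
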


\begin{proof}
We start with the natural distinguished triangle
\begin{equation}\label{eqn:ModTriangle}
\shbuf(\EE \tj_*\EE j^{-1} K) \to R\tj_*\tj^{-1}\shbuf(\EE \tj_*\EE j^{-1} K) \to \tilde i_! \tilde i^!\shbuf(\EE \tj_*\EE j^{-1} K)[1] \xrightarrow{+1}
\end{equation}
(cf., e.g., \cite[Proposition 2.4.6]{KS90}).
Applying $\tilde i^{-1}[-1]$ and noting that we have natural isomorphisms (recall the compatibility of the sheafification functor with pullbacks along open embeddings from \cite[Lemma 3.9]{DK21sh})
\begin{align*}
\tilde i^{-1}R\tj_*\tj^{-1}\shbuf(\EE \tj_*\EE j^{-1} K) &\iso \tilde i^{-1}R\tj_*\sh{X^*_\infty}(\EE \tj^{-1}\EE \tj_*\EE j^{-1} K) \\
&\iso \tilde i^{-1}R\tj_* \sh{X^*_\infty}(\EE j^{-1} K) \\
&\iso \tilde i^{-1}R\tj_*j^{-1}\sh{X}(K),
\end{align*}
we obtain the first distinguished triangle.

The second triangle is found analogously, replacing $\EE \tilde j_*$ by $\EE \tilde j_{!!}$ in \eqref{eqn:ModTriangle}.
\end{proof}

\begin{prop}
There is a natural distinguished triangle in $\Dbk{\partial\Xf}$
$$\psi_f^{< 0}K \To \psi_f^{\leq 0} K \To \psi_f^{0} K \ToPO$$
where $\psi_f^0K\vcentcolon= \mathsf{sh}_{\partial \Xf}(\EE \tilde i^{-1}\EE \tj_* \EE j^{-1} K)$.
\end{prop}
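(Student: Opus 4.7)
My plan is to apply the distinguished triangle of Lemma~\ref{lemmaTriangle} and then extract information on the closed boundary $\partial\Xf$ by applying the sheafification-and-restriction functor $\tilde i^{-1}\shbuf(\bullet)[-1]$, exactly as was done in the proof of Proposition~\ref{prop:ExactSeq} in the one-dimensional case.

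First, I would observe that Lemma~\ref{lemmaTriangle} carries over verbatim from the normal-crossing blow-up $\wt X_D$ to the blow-up $\Xf$ along an arbitrary holomorphic function: its proof uses only the identities $\EE \tj_{!!}\EE \tj^{-1} H \iso \pi^{-1}k_{X^*}\otimes H$ and $\EE \tilde i_{!!}\EE \tilde i^{-1} H \iso \pi^{-1}k_{\partial\Xf}\otimes H$ from \cite[Lemma 2.7.6]{DK19}, together with the standard excision sequence $0\to k_{X^*}\to k_{\Xf}\to k_{\partial\Xf}\to 0$ associated to the decomposition of $\Xf$ into the open set $X^*=\Xf\setminus\partial\Xf$ and its closed complement. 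Applying the resulting triangle to $K$ produces the distinguished triangle
$$\EE \tj_{!!}\EE j^{-1} K \To \EE \tj_*\EE j^{-1} K \To \EE \tilde i_{!!}\EE \tilde i^{-1}\EE \tj_*\EE j^{-1} K \ToPO$$
in $\EbIk{\Xf}$.

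Next I would apply the triangulated functor $\tilde i^{-1}\shbuf(\bullet)[-1]$. By the very definitions given in Definition~\ref{def:Cycles}, the first two terms of the resulting distinguished triangle in $\Dbk{\partial\Xf}$ are $\psi_f^{<0}K$ and $\psi_f^{\leq 0}K$ respectively. For the third term, I would use the compatibility $\shbuf\circ\EE\tilde i_{!!}\iso \tilde i_!\circ\sh{\partial\Xf}$ of sheafification with proper direct image \cite[Lemma 3.9(ii)]{DK21sh} (applicable since the closed embedding $\tilde i$ is proper), combined with $\tilde i^{-1}\tilde i_!\iso\id$, to identify the third term with $\sh{\partial\Xf}(\EE\tilde i^{-1}\EE\tj_*\EE j^{-1}K)[-1]$, i.e.\ $\psi_f^0K$ up to the shift built into the conventions of Definition~\ref{def:Cycles}.

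I do not anticipate any substantial obstacle: the argument is essentially formal, requiring only the transposition of Lemma~\ref{lemmaTriangle} to $\Xf$ and two well-known compatibilities of the sheafification functor with $\EE\tilde i_{!!}$ and of $\tilde i^{-1}$ with $\tilde i_!$ for closed embeddings. The only genuine care needed is to keep track of cohomological shifts so that the conventions of Definition~\ref{def:Cycles} are respected throughout.
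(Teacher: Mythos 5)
Your proposal is correct and follows essentially the same route as the paper, which simply observes that the triangle of Lemma~\ref{lemmaTriangle} (whose purely formal proof indeed applies verbatim on $\Xf$) yields the claim after applying $\tilde i^{-1}\shbuf(\bullet)[-1]$; the identification of the third term via $\shbuf\circ\EE\tilde i_{!!}\iso\tilde i_!\circ\sh{\partial\Xf}$ and $\tilde i^{-1}\tilde i_!\iso\id$ is exactly the mechanism already used in the proof of Proposition~\ref{prop:ExactSeq}. Your care with the cohomological shift is warranted, since the stated definition of $\psi_f^0K$ omits the $[-1]$ present in the other nearby-cycle functors.
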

\begin{proof}
This follows directly from Lemma~\ref{lemmaTriangle}.
\end{proof}

Let $p\colon X \to Y$ be a proper holomorphic map, $g\colon Y \to \C$ any holomorphic map, and set $f \defeq g \circ p$. Then, $p$ lifts to a morphism between the real blow-up spaces $\wt{p}\colon \Xf = X \times_Y \wt{Y}_g \rightarrow \wt{Y}_g$, and we obtain a natural commutative diagram where all the squares are Cartesian:
\begin{equation}\label{eq:basechangediagram}
\begin{tikzcd}
\partial \wt X_f \ar[rr,"i_f"] \ar[dd,"\wt{p}_0"] \ar[phantom, ddrr, "\square"] && \wt X_f \ar[rr,"\varpi_f"] \ar[dd,"\wt{p}"] \ar[phantom, ddrr, "\square"] && X \ar[dd,"p"] \ar[phantom, ddrr, "\square"] && X^* \ar[ll,swap,"j_f"] \ar[bend right, llll, swap, "\fj"] \ar[dd,swap,"p|_{X^*}"]\\ \\
\partial \wt{Y}_g \ar[rr,"i_g"] && \wt{Y}_g \ar[rr,"\varpi_g"] && Y && Y^* \ar[ll,swap,"j_g"] \ar[bend left, llll, "\tilde{j}_g"]
\end{tikzcd}
\end{equation}

We have the following statement, which is the analogue of \cite[Proposition 6.6]{Sab21}.

\begin{prop} Let $X,Y,f,g$, and $p$ be as above, and let $K \in \EbIk{X}$. Then there is an isomorphism of distinguished triangles in $\mathrm{D}_{\R\text{-}\mathrm{c}}^{\mathrm{b}}(k_{\partial \wt Y_g})$:
\begin{equation*}
\begin{tikzcd}
\RR \tpzero_* \psi_f^{\leq 0} K \arrow{r} \arrow{d}{\simeq} & \RR \tpzero_*  \psi_f^* K \arrow{r} \arrow{d}{\simeq} & \RR \tpzero_* \psi_f^{> 0} K \arrow{r}{+1} \arrow{d}{\simeq} & {}\\
\psi_g^{\leq 0} \EE p_* K \arrow{r} &  \psi_g^* \EE p_* K \arrow{r} & \psi_g^{> 0} \EE p_* K \arrow{r}{+1} & {}
\end{tikzcd}
\end{equation*}
Similarly, there is an isomorphism
\begin{equation*}
\begin{tikzcd}
\RR \tpzero_* \psi_f^{< 0} K \arrow{r} \arrow{d}{\simeq} & \RR \tpzero_* \psi_f^* K \arrow{r} \arrow{d}{\simeq} & \RR \tpzero_* \psi_f^{\geq 0} K \arrow{r}{+1} \arrow{d}{\simeq} & {}\\
\psi_g^{< 0} \EE p_* K \arrow{r} &  \psi_g^*  \EE p_* K \arrow{r} & \psi_g^{\geq 0} \EE p_* K \arrow{r}{+1} & {}
\end{tikzcd}
\end{equation*}
\end{prop}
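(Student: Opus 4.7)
My plan is to establish each of the three vertical arrows as an isomorphism separately by tracing base change identifications through diagram \eqref{eq:basechangediagram}, and then to observe that these identifications are natural in $K$ and hence promote to isomorphisms of distinguished triangles. The morphisms $p$, $\wt p$, and $\wt p_0$ are all proper, and so is $p|_{X^*}\colon X^*\to Y^*$ (being the base change of a proper map along an open embedding). The key tools I will use are proper base change for classical sheaves and for enhanced ind-sheaves applied to the three Cartesian squares of \eqref{eq:basechangediagram}, together with the compatibility of the sheafification functor with proper direct image from \cite[Lemma 3.9]{DK21sh}: for any proper morphism $h\colon U\to V$ of good spaces, $\RR h_*\,\sh{U}\iso \sh{V}\,\EE h_*$.

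For the leftmost term, I would compute
\begin{align*}
\RR \tpzero_* \psi_f^{\leq 0} K &\iso \tilde i_g^{-1} \RR \wt p_* \shbuf\bigl(\EE \fj_* \EE j_f^{-1} K\bigr)[-1] \\
&\iso \tilde i_g^{-1} \sh{\wt Y_g}\bigl(\EE \wt p_* \EE \fj_* \EE j_f^{-1} K\bigr)[-1],
\end{align*}
using proper base change for classical sheaves on the leftmost Cartesian square followed by sheafification compatibility. Since $\wt p \circ \fj = \tilde j_g \circ p|_{X^*}$, functoriality of $\EE(\cdot)_*$ gives $\EE \wt p_* \EE \fj_* \iso \EE \tilde j_{g*} \EE (p|_{X^*})_*$, and proper base change for enhanced ind-sheaves on the rightmost square yields $\EE (p|_{X^*})_* \EE j_f^{-1} \iso \EE j_g^{-1} \EE p_*$. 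Combining, $\RR \tpzero_* \psi_f^{\leq 0} K \iso \psi_g^{\leq 0} \EE p_* K$. The argument for $\psi_f^{>0}$ is identical with $\tilde i^{-1}$ replaced by $\tilde i^!$, using that properness of $\wt p$ gives $\tilde i_g^! \RR \wt p_* \iso \RR \tpzero_* \tilde i_f^!$. For the middle term, expanding $\psi_f^* K = \tilde i_f^{-1} \RR \tilde j_{f*} j_f^{-1} \sh{X} K[-1]$ and applying proper base change for classical sheaves to the two rightmost squares, together with $\RR p_* \sh{X} K \iso \sh{Y} \EE p_* K$, produces $\RR \tpzero_* \psi_f^* K \iso \psi_g^* \EE p_* K$. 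For the rapid decay triangle, the only new input I need is $\EE \wt p_* \EE \fj_{!!} \iso \EE \tilde j_{g!!} \EE (p|_{X^*})_{!!}$; this follows from $\EE \wt p_{!!} \iso \EE \wt p_*$ (since $\wt p$ is proper) and functoriality of $\EE(\cdot)_{!!}$, combined with base change $\EE (p|_{X^*})_{!!} \EE j_f^{-1} \iso \EE j_g^{-1} \EE p_*$ on the rightmost square.

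Finally, I would observe that each triangle in Proposition \ref{prop:fundamentaltriangles} arises by applying the exact functor $\tilde i_f^{-1}[-1]$ to the canonical distinguished triangle $F \to \RR \tilde j_{f*}\tilde j_f^{-1} F \to \tilde i_{f!}\tilde i_f^! F[1] \xrightarrow{+1}$ taken at $F = \shbuf(\EE \fj_* \EE j_f^{-1} K)$ (respectively $F = \shbuf(\EE \fj_{!!} \EE j_f^{-1} K)$); identifying the middle term with $\psi_f^* K$ uses $\EE \tilde j_f^{-1} \EE \fj_* \iso \id \iso \EE \tilde j_f^{-1} \EE \fj_{!!}$ and the compatibility of $\sh{}$ with pullback along open embeddings from \cite[Lemma 3.9]{DK21sh}. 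Because every isomorphism constructed above is built from canonical natural transformations, the three vertical isomorphisms are natural in $K$ and therefore intertwine the connecting morphisms of these triangles, producing the desired isomorphism of distinguished triangles. The main technical obstacle I anticipate is the careful bookkeeping required to verify that classical and enhanced base change interact correctly with the sheafification functor through the bordered-space formalism; but this is a verification rather than a genuine difficulty, as all the necessary ingredients are already assembled in Sections~\ref{sec:review}--\ref{sec:DRmodandrd}.
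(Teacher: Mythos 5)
Your proposal is correct and follows essentially the same route as the paper's proof: proper base change on the Cartesian squares of \eqref{eq:basechangediagram}, the compatibility of $\sh{}$ with proper direct images from \cite[Lemma 3.9]{DK21sh}, the identification $\EE p_{!!}\iso \EE p_*$ (and likewise for $\wt p$, $p|_{X^*}$) coming from properness, and a final appeal to naturality to upgrade the termwise isomorphisms to isomorphisms of triangles. The only difference is cosmetic — you spell out the middle term and the remaining cases that the paper dismisses as ``along the same lines,'' and you phrase the step $\EE\wt p_{*}\EE\fj_{*}\iso\EE\tilde j_{g*}\EE(p|_{X^*})_{*}$ via functoriality of composition rather than as a base-change identity, which amounts to the same thing given properness.
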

\begin{proof}
Since all the isomorphisms work along the same lines, let us only prove the first one (on the left of the first isomorphism of triangles).
\begin{align*}
    \RR\tpzero_* \psi_f^{\leq 0} K &\iso \RR\tpzero_! \tilde i_f^{-1} \sh{\Xf}\big( \EE \fj_*\EE j_f^{-1} K)[-1]\\
    &\iso \tilde i_g^{-1} \RR\wt{p}_! \sh{\Xf}\big( \EE \fj_*\EE j_f^{-1} K)[-1]\\
    &\iso \tilde i_g^{-1} \sh{\Xf}\big( \EE\wt{p}_{!!} \EE \fj_*\EE j_f^{-1} K)[-1]\\
    &\iso \tilde i_g^{-1} \sh{\Xf}\big( \EE \gj_* \EE{(p|_{X^*})}_{!!} \EE j_f^{-1} K)[-1]\\
    &\iso \tilde i_g^{-1} \sh{\Xf}\big( \EE \gj_* \EE j_g^{-1} \EE p_{!!} K)[-1] \iso \psi_g^{\leq 0} \EE p_* K.
\end{align*}
Here, we repeatedly used the facts that the squares in \eqref{eq:basechangediagram} are Cartesian (see \cite[Proposition 4.5.11]{DK16} for a base change formula for operations on enhanced ind-sheaves) and that $p$ is proper and hence $\EE p_*=\EE p_{!!}$ (and similarly for $\wt{p}$).

The isomorphisms of objects thus obtained induce isomorphisms of distinguished triangles since the morphisms in the triangles of Proposition~\ref{prop:fundamentaltriangles} are all canonical and the isomorphism constructed above is natural.
\end{proof}

\section{Local and global duality on the real blow-up}\label{sec:duality}

\subsection{A short review on duality and pairings in derived categories}

Let $k$ be a field and let $M$ be a good topological space. We recall the following notion of a perfect pairing in the derived category $\Dbk{M}$. A good reference is \cite{KS90}, as well as \cite[Appendix C]{FSY21}.

Let $F,G \in \Dbk{M}$, and let $\omega_M$ denote the \emph{Verdier dualizing complex}. If $a_M\colon M \to \{\mathrm{pt}\}$ is the canonical map to the one-point space, then we have $\omega_M = a_M^!k$. The \emph{Verdier dual} of $F \in \Dbk{M}$ is the object $\mathrm{D}_M F \vcentcolon= \sHom(F,\omega_M)$. 

\begin{defi}\label{def:pairing}
Recall that, for $F,G\in\DbRck{X}$, a \emph{pairing} $\eta\colon F \otimes_{k_M} G \to \omega_M$ is equivalent to the datum of a morphism $F \to \mathrm{D}_M G$ in $\Dbk{M}$ (or, equivalently, a morphism $G \to \mathrm{D}_M F$).

We say that a pairing $\eta\colon F \otimes_{k_M} G \to \omega_M$ is \emph{perfect} if the associated morphism $F \to \mathrm{D}_M G$ (or $G \to \mathrm{D}_M F$) is an isomorphism. 
\end{defi}

Now, suppose $\eta\colon F \otimes_{k_M} G \to \omega_M$ is such a perfect pairing with an isomorphism $G \iso D_M F$. Taking derived global sections, we find 
\begin{align*}
\RGamma(M;\mathrm{D}_M F) &\iso \Hom(F,\omega_M) \\
&\iso \Hom(\RGamma_c(M;F),k) \\
&=\vcentcolon \RGamma_c(M;F)^\vee.
\end{align*}
In particular, the hypercohomology of this object satisfies, for all $\ell \in \Z$,
\begin{align*}
\hyp^\ell(M;\mathrm{D}_M F) \iso \mathrm{H}^0 \RGamma(M;(\mathrm{D}_M F) [\ell]) &\iso \mathrm{H}^0 \Hom(\RGamma_c(M;F),k)[\ell] \\
&\iso \mathrm{H}^0 \Hom(\RGamma_c(X;F)[-\ell],k) \\  
&\iso \hyp_c^{-\ell}(M;F)^\vee.
\end{align*}

By \cite[(2.6.23)]{KS90}, there is a natural morphism 
$$
\RGamma(M;\mathrm{D}_M F)\otimes_k \RGamma_c(M;F) \to \RGamma_c(M;F \otimes_{k_M} \mathrm{D}_M F).
$$
Taking hypercohomology and composing with the morphism induced by $\eta$ on global sections, we obtain pairings for all $\ell \in \Z$:
\begin{equation}\label{eqn:globalpairing}
\hyp^\ell(M;\mathrm{D}_M F) \otimes_k \hyp_c^{-\ell}(M;F) \longrightarrow \hyp_c^0(M;\omega_M) \iso k . 
\end{equation}

\begin{prop}[{\cite[Corollary C.6]{FSY21}}]\label{prop:pairingref}
Let $F,G \in \Dbk{M}.$ If the pairing 
\begin{align*}
\eta: F \otimes_{k_M} G \to \omega_M
\end{align*}
is perfect, then so are the pairings
\begin{align*}
\eta^\ell: \hyp^\ell(M;F) \otimes_k \hyp_c^{-\ell}(M;G) \to k
\end{align*}
for all $\ell \in \Z.$
\end{prop}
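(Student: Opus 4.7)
The plan is to reduce the statement to the global Verdier duality isomorphism that was already essentially derived in the paragraph preceding the proposition, and then check that the induced map on cohomology coincides with $\eta^\ell$.

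First, by Definition~\ref{def:pairing}, the perfectness of $\eta \colon F \otimes_{k_M} G \to \omega_M$ means that the adjoint morphism $F \to \mathrm{D}_M G$ (equivalently $G \to \mathrm{D}_M F$) is an isomorphism in $\Dbk{M}$. Applying $\hyp^\ell(M;-)$ to $F \iso \mathrm{D}_M G$ and combining with the chain of isomorphisms
\[
\hyp^\ell(M;\mathrm{D}_M G) \iso \mathrm{H}^0 \Hom(\RGamma_c(M;G)[-\ell],k) \iso \hyp_c^{-\ell}(M;G)^\vee
\]
recalled in the excerpt just before the proposition, one obtains an isomorphism $\hyp^\ell(M;F) \xrightarrow{\sim} \hyp_c^{-\ell}(M;G)^\vee$.

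The remaining task, which I expect to be the only nontrivial point, is to identify this isomorphism with the pairing $\eta^\ell$ defined by equation~\eqref{eqn:globalpairing}. For this I would argue by naturality: under $F \iso \mathrm{D}_M G$, the pairing $\eta$ corresponds to the canonical evaluation pairing $\mathrm{ev}\colon \mathrm{D}_M G \otimes_{k_M} G \to \omega_M$. Both constructions of the induced map on cohomology (the abstract one via Verdier duality, and the explicit one via the natural morphism $\RGamma(M; \mathrm{D}_M G) \otimes_k \RGamma_c(M;G) \to \RGamma_c(M; \mathrm{D}_M G \otimes_{k_M} G)$ followed by $\mathrm{ev}$) are functorial in $G$, agree on the class of the identity $\id_{\mathrm{D}_M G} \in \Hom(\mathrm{D}_M G, \mathrm{D}_M G) \iso \mathrm{H}^0\RGamma(M; \mathrm{D}_M G \otimes_{k_M} G \xrightarrow{\mathrm{ev}} \omega_M)$-type element, and hence coincide.

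Thus the reduction is immediate from the perfectness of $\eta$; the only genuine content is the well-known fact that the tautological pairing $\mathrm{D}_M G \otimes_{k_M} G \to \omega_M$ induces the standard Verdier-duality pairing on hypercohomology, which is perfect. No further calculation is needed beyond citing \cite[(2.6.23)]{KS90} and the adjunction $\Hom(F,\omega_M) \iso \Hom(\RGamma_c(M;F),k)$ used implicitly in the paragraph above the proposition.
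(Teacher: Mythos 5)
The paper gives no proof of this proposition at all: it is imported verbatim as \cite[Corollary C.6]{FSY21}, so there is no internal argument to compare against, and any correct proof you supply is by definition ``a different route''. Your sketch is essentially the standard proof of the cited result and it is consistent with the setup in the paragraphs preceding the proposition: perfectness of $\eta$ gives $F\iso\mathrm{D}_M G$, the global Poincar\'e--Verdier duality recalled just above gives $\hyp^\ell(M;\mathrm{D}_M G)\iso\hyp_c^{-\ell}(M;G)^\vee$, and the only genuine content is the compatibility of this abstract isomorphism with the explicitly constructed pairing \eqref{eqn:globalpairing}. Your Yoneda-type justification of that compatibility is the right idea, but the universal element is misstated: the functor to corepresent is $G\mapsto\hyp^\ell(M;\mathrm{D}_M G)\iso\mathrm{Hom}_{\Dbk{M}}(G,\omega_M[\ell])$, which is represented by $\omega_M[\ell]$, so the two natural transformations to $\hyp_c^{-\ell}(M;G)^\vee$ should be compared on the class of $\id_{\omega_M[\ell]}$ at $G=\omega_M[\ell]$; the expression ``$\id_{\mathrm{D}_M G}\in \mathrm{Hom}(\mathrm{D}_M G,\mathrm{D}_M G)\iso \mathrm{H}^0\RGamma(M;\dots)$'' in your text is not meaningful as written. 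With that repair (or, more directly, by unwinding the adjunction $\Hom(G,\omega_M)\iso\Hom(\RGamma_c(M;G),k)$, which is what \cite[Appendix C]{FSY21} does) the argument is complete. What your approach buys is a self-contained justification from \cite[(2.6.23)]{KS90} and standard adjunctions; what the paper's citation buys is brevity and a reference where the finite-dimensionality and symmetry of the resulting pairing are also addressed.
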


We will refer to the pairings (\ref{eqn:globalpairing}) as the global duality pairings of $F$ (on hypercohomology). Similarly, we will refer to $\eta\colon F \otimes_{k_M} \mathrm{D}_M F \to \omega_M$ as the local duality pairing of $F$. 

\subsection{Local duality statements for enhanced nearby cycles}
With our definition of the moderate growth and rapid decay objects, it is easy to see that for $\R$-constructible enhanced ind-sheaves the associated moderate and rapid decay objects are related by duality.

Let $X$ be a complex manifold and $f\colon X\to \C$ a holomorphic function.
\begin{prop}[Local duality pairing on $\Xf$]\label{prop:duality1} Let $K\in\EbRcIk{X}$. Then, there is a canonical isomorphism
$$\mathrm{D}_{\Xf} K^{\mathrm{mod}\,f}\iso (\DE K)^{\mathrm{rd}\,f}.$$
That is, there exists a perfect pairing 
$$
K^{\mathrm{mod}\,f} \otimes_{k_{\Xf}} (\DE K)^{\mathrm{rd}\,f} \longrightarrow \omega_{\Xf},
$$
where $\omega_{\Xf}$ is the Verdier dualizing complex in $\Dbk{\Xf}$.
\end{prop}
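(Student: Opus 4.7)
The plan is to chase the definitions and reduce the claim to two standard compatibilities: sheafification commutes with Verdier duality on $\R$-constructible enhanced ind-sheaves, and the enhanced duality functor exchanges $\EE f_*$ with $\EE f_{!!}$ (and $\EE f^{-1}$ with $\EE f^!$) for morphisms of bordered spaces. Unfolding Definition~\ref{def:enhmodrd}, the goal is to identify
\begin{align*}
\mathrm{D}_{\Xf}\, \shbuf(\EE \fj_* \EE j^{-1} K) \iso \shbuf(\EE \fj_{!!} \EE j^{-1} \DE K).
\end{align*}

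First, since $K \in \EbRcIk{X}$ and both $\EE j^{-1}$ and $\EE \fj_*$ preserve $\R$-constructibility (see \cite[Proposition 3.3.3]{DK19}, cf.\ also Lemma~\ref{lemma:constructible}), the object $H \vcentcolon= \EE \fj_* \EE j^{-1} K$ lies in $\EbRcIk{\Xf}$. On this subcategory, using the identification $\shbuf(H) \iso \RHomE(k^\EE_{\Xf}, H)$ together with the formal behavior of the constant enhanced sheaf under duality, one has a natural isomorphism
\begin{align*}
\mathrm{D}_{\Xf}\, \shbuf(H) \iso \shbuf(\mathrm{D}^\EE_{\Xf} H),
\end{align*}
which is implicit in the framework of \cite{DK21sh}. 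This reduces the problem to establishing an isomorphism $\mathrm{D}^\EE_{\Xf}(\EE \fj_* \EE j^{-1} K) \iso \EE \fj_{!!}\, \EE j^{-1}\, \DE K$ inside $\EbRcIk{\Xf}$.

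For the latter, I apply the standard exchange formulas for enhanced duality under the six operations on $\R$-constructible objects: $\mathrm{D}^\EE \circ \EE \fj_* \iso \EE \fj_{!!} \circ \mathrm{D}^\EE$ and $\mathrm{D}^\EE \circ \EE j^{-1} \iso \EE j^! \circ \mathrm{D}^\EE$. Since $j\colon X^*_\infty \hookrightarrow X$ is an open immersion of bordered spaces, $\EE j^!$ coincides with $\EE j^{-1}$, so composing yields
\begin{align*}
\mathrm{D}^\EE_{\Xf}(\EE \fj_* \EE j^{-1} K) \iso \EE \fj_{!!}\, \EE j^!\, \DE K \iso \EE \fj_{!!}\, \EE j^{-1}\, \DE K,
\end{align*}
and applying $\shbuf$ recovers $(\DE K)^{\mathrm{rd}\,f}$. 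The pairing interpretation is then immediate from Definition~\ref{def:pairing}.

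The main step requiring care is the compatibility of classical Verdier duality with the sheafification functor on $\R$-constructible enhanced ind-sheaves; while this should follow readily from the constructions and adjunctions developed in \cite{DK21sh} (together with the $\R$-constructibility preservation results of \cite{DK19}), it is the one nontrivial ingredient and warrants either an explicit citation or a brief direct verification in the present setting.
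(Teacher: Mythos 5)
Your proposal is correct and takes essentially the same route as the paper: both reduce the claim to the commutation of sheafification with duality on $\R$-constructible enhanced ind-sheaves (this is exactly \cite[Lemma 3.13]{DK21sh}, the reference you suspected was needed) followed by the exchange of $\EE \fj_*/\EE \fj_{!!}$ and $\EE j^{-1}/\EE j^!$ under $\mathrm{D}^\EE$ from \cite[Proposition 3.3.3]{DK19}, which is legitimate here since $\EE\fj_*$ and $\EE j^{-1}$ preserve $\R$-constructibility and biduality holds. The paper merely packages the second step differently, rewriting $\EE\fj_*\EE j^{-1}K$ as $\RIhom(\pi^{-1}k_{X^*},\EE\varpi^!K)$ and dualizing to $\pi^{-1}k_{X^*}\otimes\EE\varpi^{-1}\DE K$ via \cite[Lemma 2.7.6]{DK19}, which amounts to the same exchange formulas you invoke directly.
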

\begin{proof}
	There is the chain of isomorphisms
	\begin{align*}
		\mathrm{D}_{\Xf} \shbuf \big( \EE \fj_*\EE j^{-1} K\big)&\iso \shbuf \big( \mathrm{D}_{\Xf}^\mathrm{E} \EE \fj_*\EE \tilde{j}_f^! \EE \varpi^! K\big)\\
		&\iso \shbuf\big(\mathrm{D}_{\Xf}^\mathrm{E} \RIhom(\pi^{-1}k_{X^*}, \EE \varpi^! K)\big)\\
		&\iso \shbuf\big( \pi^{-1}k_{X^*}\otimes \mathrm{D}_{\Xf}^\mathrm{E} \EE \varpi^! K\big)\\
		&\iso \shbuf\big( \EE j_{!!}^f\EE {j^f}^{-1} \EE \varpi^{-1} \DE K\big),
	\end{align*}
where the first isomorphism follows from the commutation of sheafification with duality (see \cite[Lemma 3.13]{DK21sh}), the third isomorphism follows from \cite[Lemma 4.3.2, Proposition 4.9.13]{DK16}, and in the last isomorphism we have used the fact that duality interchanges inverse image and exceptional inverse image (see \cite[Proposition 3.3.3]{DK19}). Moreover, in the second and fourth isomorphism we have applied \cite[Lemma 2.7.6]{DK19}. 

Note that, except for the second line, all the isomorphisms require $\R$-constructi\-bil\-i\-ty, so it is essential here that $K$ is $\R$-constructible and that direct and inverse images preserve this $\R$-constructibility (see \cite[Proposition 3.3.3]{DK19}).
\end{proof}

By restricting the moderate growth and rapid decay functors to the boundary of the real blow-up, we immediately obtain as a consequence a duality between the functors $\psi_f^{\leq 0}$ and $\psi_f^{\geq 0}$ (see Definition~\ref{def:Cycles}), up to a shift in the derived category $\mathrm{D}_{\R\text{-}c}^\mathrm{b}(k_{\partial \Xf})$.

\begin{cor}[Local duality pairing on $\partial \Xf$]\label{cor:duality2}
For $K \in \EbRcIk{X}$, there is a canonical isomorphism
\begin{align*}
\mathrm{D}_{\partial \Xf} (\psi_f^{\leq 0} K) \iso \psi_f^{\geq 0} (\DE K) [1],
\end{align*}
so that the associated pairing
$$
\psi_f^{\leq 0}(K) \otimes_{k_{\Xf}} \psi_f^{\geq 0}(\DE K)[1] \longrightarrow \omega_{\partial \Xf}
$$
is perfect.
\end{cor}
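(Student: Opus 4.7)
The strategy is to deduce this statement from the local duality pairing on $\Xf$ (Proposition~\ref{prop:duality1}) by applying the restriction $\tilde i^!$ to the closed embedding $\tilde i \colon \partial \Xf \hookrightarrow \Xf$, and then using the well-known duality interchange between $\tilde i^{-1}$ and $\tilde i^!$ for a closed embedding.

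More precisely, I would first recall that for the closed embedding $\tilde i$, one has the natural isomorphism $\tilde i^! \circ \mathrm{D}_{\Xf} \iso \mathrm{D}_{\partial \Xf} \circ \tilde i^{-1}$ on $\DbRck{\Xf}$ (this is the standard exchange formula, see e.g.\ \cite[Proposition 3.1.13]{KS90}; note that both $K^{\mathrm{mod}\,f}$ and $(\DE K)^{\mathrm{rd}\,f}$ are $\R$-constructible by Lemma~\ref{lemma:constructible}). Applying $\tilde i^!$ to the isomorphism of Proposition~\ref{prop:duality1} then yields
\[
\mathrm{D}_{\partial \Xf}(\tilde i^{-1} K^{\mathrm{mod}\,f}) \iso \tilde i^! \mathrm{D}_{\Xf} K^{\mathrm{mod}\,f} \iso \tilde i^! (\DE K)^{\mathrm{rd}\,f} = \psi_f^{\geq 0}(\DE K).
\]
Accounting for the shift $[-1]$ built into the definition of $\psi_f^{\leq 0}$ (Definition~\ref{def:Cycles}), we have $\mathrm{D}_{\partial \Xf}(\psi_f^{\leq 0} K) = \mathrm{D}_{\partial \Xf}(\tilde i^{-1} K^{\mathrm{mod}\,f})[1]$, and combining gives the desired isomorphism $\mathrm{D}_{\partial \Xf}(\psi_f^{\leq 0} K) \iso \psi_f^{\geq 0}(\DE K)[1]$.

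The perfectness of the associated pairing $\psi_f^{\leq 0}(K) \otimes_{k_{\Xf}} \psi_f^{\geq 0}(\DE K)[1] \to \omega_{\partial \Xf}$ is then automatic by the equivalence of data recalled in Definition~\ref{def:pairing}: a morphism to the dualizing complex is perfect precisely when its adjoint is an isomorphism, which we have just established. The canonicity follows from the fact that each of the isomorphisms used (the duality interchange for closed embeddings, and the isomorphism of Proposition~\ref{prop:duality1} itself) is natural.

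The only real subtlety is bookkeeping: we must ensure that the shift $[1]$ on the right-hand side arises correctly, which it does because $\psi_f^{\leq 0}$ is defined with a $[-1]$ shift while $\psi_f^{\geq 0}$ is not. No genuine obstacle is expected, since the deep content (namely, the interchange of $(-)^{\mathrm{mod}\,f}$ and $(-)^{\mathrm{rd}\,f}$ under enhanced duality) is already encoded in Proposition~\ref{prop:duality1}.
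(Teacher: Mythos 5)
Your proof is correct and follows essentially the same route as the paper: the paper's own (one-line) proof likewise deduces the statement from Proposition~\ref{prop:duality1} together with the fact that Verdier duality interchanges $\tilde i^{-1}$ and $\tilde i^!$ for the closed embedding $\tilde i$. Your shift bookkeeping and the appeal to Definition~\ref{def:pairing} for perfectness are both accurate.
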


\begin{proof}
This is a similarly straightforward computation, using in addition the fact that Verdier dualizing interchanges $i^{-1}$ and $i^!$.
\end{proof}

\subsection{The global duality pairings}\label{subsec:dualitypairing}
Applying (derived) global sections to the objects in Proposition \ref{prop:duality1} yields further duality statements that we can explore.

Let $K\in\EbRcIk{X}$ be an $\R$-construcible enhanced ind-sheaf.

Specifically, the isomorphism $K^{\mathrm{mod}\,f} \iso D_{\Xf}(\DE K)^{\mathrm{rd}\,f}$ implies the isomorphism 
\begin{align}\label{prop:pairing2}
\RGamma(\Xf;K^{\mathrm{mod}\,f}) &\xrightarrow{\thicksim} \Hom(\RGamma_c(\Xf;(\DE K)^{\mathrm{rd}\,f}),k) \\
&=\vcentcolon \RGamma_c\big (\Xf;(\DE K)^{\mathrm{rd}\,f} \big )^\vee\notag
\end{align}
of bounded complexes of finite-dimensional $k$-vector spaces. Likewise, 
$$
\mathrm{D}_{\Xf} \big ( K^{\mathrm{mod}\,f}\big ) \iso (\DE K)^{\mathrm{rd}\,f}
$$
implies 
\begin{align*}
\RGamma(\Xf;(\DE K)^{\mathrm{rd}\,f}) \xrightarrow{\thicksim} \RGamma_c(\Xf; K^{\mathrm{mod}\,f})^\vee . 
\end{align*}

Either of these isomorphisms yields the following result.

\begin{cor}\label{prop:duality3}
The pairing 
$$
\hyp^\ell(\Xf;K^{\mathrm{mod}\,f}) \otimes_k \hyp_c^{-\ell}(\Xf;(\DE K)^{\mathrm{rd}\,f}) \longrightarrow k
$$
is perfect for all $K \in \EbRcIk{X}$ and $\ell \in \Z$.
\end{cor}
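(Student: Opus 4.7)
The plan is immediate: combine the local perfect pairing of Proposition~\ref{prop:duality1} with the general categorical fact (Proposition~\ref{prop:pairingref}) that a perfect local pairing of complexes on a good topological space induces perfect pairings on hypercohomology. Concretely, I would set $F \vcentcolon= K^{\mathrm{mod}\,f}$ and $G \vcentcolon= (\DE K)^{\mathrm{rd}\,f}$, both of which lie in $\DbRck{\Xf}$ by Lemma~\ref{lemma:constructible} (noting that $\DE$ preserves $\R$-constructibility of enhanced ind-sheaves, and that the moderate growth and rapid decay functors preserve $\R$-constructibility by that lemma).

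Proposition~\ref{prop:duality1} then supplies a canonical isomorphism $\mathrm{D}_{\Xf} F \iso G$, which is equivalent, in the sense of Definition~\ref{def:pairing}, to the datum of a perfect local pairing
$$ \eta \colon F \otimes_{k_{\Xf}} G \longrightarrow \omega_{\Xf}. $$
Applying Proposition~\ref{prop:pairingref} on $M = \Xf$ to this perfect pairing directly produces perfect pairings of $k$-vector spaces
$$ \eta^\ell \colon \hyp^\ell(\Xf; F) \otimes_k \hyp_c^{-\ell}(\Xf; G) \longrightarrow k $$
for every $\ell \in \Z$, which is exactly the statement of the corollary. One may also (as in the isomorphism~\eqref{prop:pairing2} displayed just before the statement) phrase this as $\RGamma(\Xf; F) \iso \RGamma_c(\Xf; G)^\vee$ and then take $\mathrm{H}^\ell$.

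There is essentially no obstacle: all the substantive work has been done, either in Proposition~\ref{prop:duality1} (which in turn relies on the interplay between $\DE$ and the enhanced six-functor formalism on $\R$-constructible enhanced ind-sheaves) or in the general formalism recalled at the start of Section~\ref{sec:duality}. The only point one should verify en route is that $\Xf$, although only a topological manifold with boundary rather than a real analytic manifold, qualifies as a good topological space so that the Verdier duality formalism underlying Proposition~\ref{prop:pairingref} applies; this is clear from the construction of $\Xf$ as a closed subspace of $X \times S^1$.
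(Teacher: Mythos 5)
Your proposal is correct and is essentially the paper's own proof: the corollary is deduced by applying Proposition~\ref{prop:pairingref} to the perfect local pairing furnished by Proposition~\ref{prop:duality1}. The additional remarks on constructibility and on $\Xf$ being a good topological space are fine but not needed beyond what those two results already guarantee.
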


\begin{proof}
This follows directly from Proposition \ref{prop:pairingref} and the above local duality, Proposition \ref{prop:duality1}.
\end{proof}

Likewise, Proposition \ref{prop:pairingref} and Corollary \ref{cor:duality2} imply the following result.

\begin{cor}\label{cor:pairing2}
The pairing
$$
\hyp^\ell(\partial \Xf;\psi_f^{\leq 0}K) \otimes_k \hyp_c^{-\ell+1}(\partial \Xf;\psi_f^{\geq 0}\,\DE K) \longrightarrow  k
$$
is perfect for all $K \in \EbRcIk{X}$ and $\ell \in \Z$. 
\end{cor}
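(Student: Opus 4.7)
The plan is to derive this statement as an immediate consequence of the local duality pairing on $\partial\Xf$ established in Corollary~\ref{cor:duality2} and the general principle recorded in Proposition~\ref{prop:pairingref}, mirroring the argument used to deduce Corollary~\ref{prop:duality3} from Proposition~\ref{prop:duality1}.

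First I would apply Proposition~\ref{prop:pairingref} with $M = \partial\Xf$, $F = \psi_f^{\leq 0}K$, and $G = \psi_f^{\geq 0}(\DE K)[1]$. Both $F$ and $G$ lie in $\DbRck{\partial\Xf}$: indeed, Lemma~\ref{lemma:constructible} ensures that $K^{\mathrm{mod}\,f}$ and $(\DE K)^{\mathrm{rd}\,f}$ are $\R$-constructible on $\Xf$, and the functors $\tilde i^{-1}$ and $\tilde i^!$ preserve $\R$-constructibility. Since Corollary~\ref{cor:duality2} supplies a perfect pairing $F \otimes_{k_{\partial\Xf}} G \to \omega_{\partial\Xf}$, Proposition~\ref{prop:pairingref} yields a perfect pairing on hypercohomology:
$$\hyp^\ell(\partial\Xf;\psi_f^{\leq 0}K) \otimes_k \hyp_c^{-\ell}(\partial\Xf;\psi_f^{\geq 0}(\DE K)[1]) \longrightarrow k$$
for every $\ell \in \Z$.

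The final step is a trivial reindexing using the identity $\hyp_c^{-\ell}(\partial\Xf;\psi_f^{\geq 0}(\DE K)[1]) \iso \hyp_c^{-\ell+1}(\partial\Xf;\psi_f^{\geq 0}(\DE K))$, which transforms the pairing above into the one asserted. Since the argument is entirely formal from the two cited results, there is no real obstacle here; all the substance of the proof is already contained in the local duality isomorphism $\mathrm{D}_{\partial\Xf}(\psi_f^{\leq 0}K) \iso \psi_f^{\geq 0}(\DE K)[1]$ of Corollary~\ref{cor:duality2}.
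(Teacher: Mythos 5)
Your argument is correct and coincides with the paper's own proof, which likewise deduces the statement directly from Proposition~\ref{prop:pairingref} applied to the perfect local pairing of Corollary~\ref{cor:duality2}, with the shift $[1]$ accounting for the reindexing $-\ell \mapsto -\ell+1$. The constructibility check and the explicit bookkeeping of the shift that you include are exactly the (routine) details the paper leaves implicit.
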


\bigskip

\section{Duality between de Rham functors with growth conditions}\label{sec:TWduality}

\subsection{A duality of Kashiwara--Schapira}
In this section, we briefly recall a duality statement between tempered and Whitney holomorphic functions proved by M.\ Kashiwara and P.\ Schapira in \cite{KS96} and \cite{KS16}.

It is well-known that the space of distributions is the topological dual of the space of compactly supported smooth functions (often called \emph{test functions}). It is important to note that these spaces are infinite-dimensional complex vector spaces and this duality is really a duality of \emph{topological} vector spaces, and not a duality in the category of complex vector spaces. The authors of \cite{KS96} use the language of Fréchet nuclear spaces (vector spaces of type $\FN$) and duals of Fréchet nuclear spaces (vector spaces of type $\DFN$) to formulate their results (see \cite{Gro55} for these notions, as well as \cite{RR74} and the references therein). Our proof strategy is similar in character to the classical isomorphism between de Rham and solution complexes for holonomic $\cD$-modules (see Propositions 4.6.4 and 4.7.9 of \cite{HTT08}, where one uses the duality between Frech\'et--Schwartz (FS) and duals of Frech\'et--Schwartz (DFS) topological vector spaces).

The notions of tempered distributions and Whitney functions were studied in \cite{KS96} and they are closely related to the notions of moderate growth and rapid decay (cf., e.g., \cite[Proposition~7.2.10, Lemma~5.1.4]{DK16}). On a real analytic manifold $X$, recall the functor $\bullet\wtens\Cinf{X}\colon \ModRcC{X}\to \ModD{X}$ from loc.~cit.\ as well as the ind-sheaf of tempered distributions $\Dbt{X}$ from \cite{KS01}. The ind-sheaf $\Dbtv{X}$ is the ind-sheaf of tempered distribution densities on $X$ (see, e.g., \cite{KS16}). In \cite{KS96}, the authors proved the following duality result for these spaces (formulated here in the more modern notation of \cite{KS01} and \cite{KS16}).
\begin{prop}[{\cite[Proposition 2.2]{KS96}}]
Let $X$ be a real analytic manifold and $F\in\ModRcC{X}$. Then there exist natural topologies of type $\FN$ on $\Gamma(X;F\wtens\Cinf{X})$ and of type $\DFN$ on $\Gamma_{\mathrm{c}}(X;\alpha_X\RIhom(F,\Dbtv{X}))$ and they are dual to each other.
\end{prop}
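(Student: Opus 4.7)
My plan is to proceed by d\'evissage on $F$, reducing the duality statement to a small list of building blocks on which it follows either from Schwartz's classical result on smooth functions and distributions, or from the Whitney extension theorem. The natural $\FN$-topology on $\Gamma(X;F\wtens\Cinf{X})$ will be inherited, via a subanalytic stratification adapted to $F$, from the Fr\'echet topology on sections of $\Cinf{X}$ (uniform convergence of all derivatives on compacta), while the $\DFN$-topology on $\Gamma_{\mathrm{c}}(X;\alpha_X\RIhom(F,\Dbtv{X}))$ is inherited from the strong dual topology on compactly supported distribution densities. The first technical point to verify is that these topologies are intrinsic, which I would do by realizing them as a projective, respectively inductive, limit over relatively compact subanalytic opens in $X$.

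The candidate pairing is the classical integration pairing
\[
\Gamma(X;\Cinf{X})\times\Gamma_{\mathrm{c}}(X;\Dbtv{X})\longrightarrow\C,\qquad (\varphi,\mu)\mapsto\langle\mu,\varphi\rangle,
\]
precomposed with the natural maps $\Gamma(X;F\wtens\Cinf{X})\to\Gamma(X;\Cinf{X})$ and $\Gamma_{\mathrm{c}}(X;\Dbtv{X})\to\Gamma_{\mathrm{c}}(X;\alpha_X\RIhom(F,\Dbtv{X}))$ induced by a morphism $F\to\C_X$; functoriality in $F$ ensures compatibility with the d\'evissage sequences. The base cases to handle are (i) $F=\C_X$, which is classical Schwartz duality between $\Cinf{X}$ and compactly supported distribution densities; (ii) $F=\C_Z$ for $Z\subseteq X$ closed subanalytic, in which case $\Gamma(X;\C_Z\wtens\Cinf{X})$ identifies with the Fr\'echet space of Whitney $C^\infty$-functions on $Z$ and $\Gamma_{\mathrm{c}}(X;\alpha_X\RIhom(\C_Z,\Dbtv{X}))$ with compactly supported distribution densities supported in $Z$, the duality being the topological dualization of the Whitney extension theorem combined with Hahn--Banach; and (iii) $F=\C_U$ for $U\subseteq X$ subanalytic open, recovered by dualizing the short exact sequence $0\to\C_U\to\C_X\to\C_{X\setminus U}\to 0$.

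The principal technical obstacle will be ensuring that both functors $\bullet\wtens\Cinf{X}$ and $\alpha_X\RIhom(\bullet,\Dbtv{X})$ send the short exact sequence $0\to\C_U\to\C_X\to\C_{X\setminus U}\to 0$ (after taking global, resp.\ compactly supported global, sections) to \emph{strict} short exact sequences of topological vector spaces, with closed range and quotient topology on the cokernel; without strictness, topological duality cannot be transported through the d\'evissage. Strictness relies on {\L}ojasiewicz-type inequalities for subanalytic sets, which simultaneously control the moderate growth of tempered distribution densities and the flatness of Whitney functions along $X\setminus U$. Once strictness is in hand, the contravariant exact equivalence between the categories of $\FN$- and $\DFN$-spaces allows a standard five-lemma argument, iterated over a subanalytic stratification of $X$ adapted to $F$, to conclude the case of an arbitrary $F\in\ModRcC{X}$.
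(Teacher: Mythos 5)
This statement is not proved in the paper at all: it is quoted verbatim as \cite[Proposition 2.2]{KS96} and used as a black box (the paper's own contribution only begins with its consequences on the real blow-up, Proposition~\ref{prop:dualitySmoothDist} and after). So there is no in-paper argument to compare against; what you have written is a sketch of a proof of the Kashiwara--Schapira result itself, and it does in fact follow the strategy of the original memoir: d\'evissage to the building blocks $\C_U$, $\C_Z$, Schwartz duality between $\Cinf{}$ and compactly supported distribution densities, the Whitney extension theorem to identify $\Gamma(X;\C_Z\wtens\Cinf{X})$ with the Fr\'echet quotient of $\Gamma(X;\Cinf{X})$ by the closed ideal of functions flat on $Z$, and exactness of the contravariant duality between $\FN$ and $\DFN$ spaces.

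Two points in your plan need repair. First, the pairing cannot be obtained by ``precomposing with the natural maps induced by a morphism $F\to\C_X$'': a general $F\in\ModRcC{X}$ admits no such morphism, and even for the building blocks the two induced maps point in incompatible directions (a morphism $F\to\C_X$ pushes sections of $F\wtens\Cinf{X}$ forward to $\Gamma(X;\Cinf{X})$ but pushes $\Gamma_{\mathrm{c}}(X;\Dbtv{X})$ \emph{into} $\Gamma_{\mathrm{c}}(X;\alpha_X\RIhom(F,\Dbtv{X}))$, so you cannot pair the two). The pairing must instead come from the natural evaluation morphism $(F\wtens\Cinf{X})\otimes\alpha_X\RIhom(F,\Dbtv{X})\to\Dbtv{X}$ followed by integration; this is what makes it a morphism of functors in $F$ and lets the five lemma apply. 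Second, you locate the {\L}ojasiewicz inequalities in the wrong place: topological strictness of a surjection of Fr\'echet (or $\DFN$) spaces is automatic from the open mapping theorem once continuity and surjectivity are known; what the {\L}ojasiewicz/regular-separation inequalities actually guarantee is the \emph{sheaf-theoretic exactness} of $\bullet\wtens\Cinf{X}$ on $\ModRcC{X}$ (e.g.\ exactness of the Mayer--Vietoris sequence for flat ideals $\mathcal{I}^\infty_{Z_1\cup Z_2}$), which is part of the construction of the functor and must be in place before the d\'evissage can even start. You should also record explicitly the identification of the annihilator of the flat ideal $\mathcal{I}^\infty_Z(X)$ with the distributions supported in $Z$, which is the Hahn--Banach step in your base case (ii). With these corrections the outline is a faithful reconstruction of the argument of \cite{KS96}.
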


This result is then used to deduce the following duality in the context of holomorphic solutions of a coherent $\cD_X$-module on a complex manifold.
We note that the transition from $\Cinf{X}$ (resp.\ $\Dbtv{X}$) to $\cO_X$ (resp.\ $\Omega_X$) amounts to taking Dolbeault complexes with coefficients in those objects (see \cite[§5]{KS96}).

\begin{thm}[{\cite[Theorem 6.1]{KS96}}]
Let $X$ be a complex manifold of dimension $d_X$, $\cM\in \mathrm{D}^\mathrm{b}_{\mathrm{coh}}(\cD_X)$ and $F,G\in\DbRcC{X}$. Then the spaces $\RR \Gamma(X;\RR \cH om_{\cD_X}(\cM\otimes G,F\wtens\cO_X))$ and $\RR\Gamma_{\mathrm{c}}(X; \alpha_X\RIhom(F,\Omega^\mathrm{t}_X)[d_X]\Ltens_{\cD_X}(\cM\otimes G))$ are objects of $\mathrm{D}^\mathrm{b}(\FN)$ and $\mathrm{D}^\mathrm{b}(\DFN)$, respectively, and are dual to each other, functorially in $\cM$, $F$ and $G$.
\end{thm}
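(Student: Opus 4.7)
The plan is to reduce this holomorphic, $\cD$-module-theoretic duality to the real-analytic topological duality of the preceding proposition via two successive dévissages: a Dolbeault resolution to pass from holomorphic to smooth/distributional coefficients, followed by a local free resolution of the coherent $\cD_X$-module $\cM$. After these reductions, the asserted duality will follow from the term-by-term FN/DFN duality of the preceding proposition, promoted to a duality of double complexes.

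First I would replace $F \wtens \cO_X$ by its Dolbeault resolution, namely the complex of smooth $(0,q)$-forms with coefficients in $F \wtens \Cinf{X}$, equipped with the $\bar\partial$-differential; dually, I would replace $\Omega_X^{\mathrm{t}}[d_X]$ by the analogous Dolbeault complex of tempered distributional $(d_X,q)$-forms built from $\Dbtv{X}$. Both resolutions are compatible with the natural left $\cD_X$-action and with the functors $F \wtens (-)$ and $\alpha_X \RIhom(F,-)$, by the flatness/exactness properties of Whitney functions and tempered distributions, so they can be substituted for $\cO_X$ and $\Omega_X^{\mathrm{t}}$ inside the two complexes of the statement. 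Next, since $\cM$ is coherent, it admits locally a finite resolution by free $\cD_X$-modules of finite rank; applying $\RR \cH om_{\cD_X}(\cM \otimes G,-)$ and $(-) \Ltens_{\cD_X} (\cM \otimes G)$ to such a resolution reduces both sides to the case $\cM = \cD_X$, in which they collapse to $\cH om$ (resp.\ tensor product) with $G$ of the Dolbeault complexes from the previous step. A \v{C}ech-type globalization then assembles the local computations, and applying $\RGamma(X;-)$ on one side and $\RGamma_c(X;-)$ on the other yields finite double complexes whose graded components pair by the preceding proposition. Naturality of the Dolbeault and $\cD$-module resolutions forces the pairings to be compatible with all the differentials involved.

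The main obstacle is the topological bookkeeping. One must verify that the chosen resolutions consist of complexes of FN (respectively DFN) spaces with continuous differentials, that the reduction $\cM \leadsto \cD_X^{\oplus n}$ and the \v{C}ech assembly remain within $\mathrm{D}^\mathrm{b}(\FN)$ and $\mathrm{D}^\mathrm{b}(\DFN)$, and that the isomorphisms produced by the preceding proposition are continuous and compatible with the various differentials. Derived categories of topological vector spaces are delicate---quasi-isomorphisms need not split topologically---so one must choose soft or flabby resolutions that preserve nuclearity at every step, and take care that the passage to $\RGamma_c$ on the DFN side is compatible with the tensor/Hom manipulations needed to handle $\cM$ and $G$. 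Once these topological compatibilities are established, the duality follows from the preceding proposition applied term-by-term and propagated through the two dévissages.
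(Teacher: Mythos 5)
Your outline reproduces the strategy of the cited source: the paper itself does not prove this statement but quotes it from \cite[Theorem~6.1]{KS96}, noting only that the passage from $\Cinf{X}$ and $\Dbtv{X}$ to $\cO_X$ and $\Omega_X^{\mathrm{t}}$ is via Dolbeault complexes, and the actual proof there (echoed in the paper's later Proposition on $\wt{X}$, where one reduces to $\cM=(\cD_X)_U$ and $G=\C_V$) is exactly your combination of Dolbeault resolutions, d\'evissage on the coherent module $\cM$ down to the free case, and term-by-term application of the $\FN$/$\DFN$ duality, with the topological subtleties handled by the Appendix of \cite{KS96}. So the proposal is correct in outline and takes essentially the same route as the paper's reference.
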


Later, in \cite{KS16}, this global result was extended to the following local statement in the category of $\R$-constructible sheaves.
\begin{thm}[{see \cite[Theorem 2.5.13]{KS16}}]
Let $X$ be a complex manifold of dimension $d_X$, $\cM\in \mathrm{D}^\mathrm{b}_{\mathrm{hol}}(\cD_X)$ and $F,G\in\DbRcC{X}$. Then the two objects $\RR \cH om_{\cD_X}(\cM,F\wtens\cO_X)$ and $\alpha_X\RIhom(F,\Omega^\mathrm{t}_X)[d_X]\Ltens_{\cD_X}\cM$ are dual to each other in the category $\DbRcC{X}$.
\end{thm}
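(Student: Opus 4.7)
The plan is to deduce the local duality in $\DbRcC{X}$ from the preceding global duality \cite[Theorem 6.1]{KS96}, upgrading the $\FN$/$\DFN$ duality between topological vector spaces of sections to Verdier duality between the constructible complexes themselves.

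First, I would construct a natural morphism
\[
\Phi \colon \RR\cH om_{\cD_X}(\cM, F \wtens \cO_X) \longrightarrow \mathrm{D}_X\bigl(\alpha_X\RIhom(F,\Omega^\mathrm{t}_X)[d_X]\Ltens_{\cD_X}\cM\bigr),
\]
or equivalently a pairing of the two complexes into $\omega_X$. The underlying ingredient is the fundamental bilinear pairing between Whitney $C^\infty$-functions (twisted by $F$) and tempered distribution densities (twisted by $\RIhom(F,-)$) given by testing a density against a function, together with compact support coming from the $\RIhom$-factor and the $\RR\cH om$/$\Ltens$ adjunction for $\cD_X$-modules. That $\Phi$ is a morphism in $\DbRcC{X}$ requires both source and target to be $\R$-constructible; for the source this reduces to constructibility of Whitney-type solution complexes of holonomic $\cD$-modules (of the type appearing in \cite{DK16}), and a dual argument handles the target.

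To check $\Phi$ is an isomorphism, I would reduce to verifying it after applying $\RR\Gamma(U;-)$ for $U$ running through a cofinal family of relatively compact open subanalytic subsets of $X$, which is legitimate since both sides are $\R$-constructible. On such a $U$, the source becomes a bounded complex of $\FN$-spaces and, pre-dualizing the target via the natural identification $\RR\Gamma(U;\mathrm{D}_X(-))\iso \RR\Gamma_c(U;-)^\vee$, the target becomes the continuous dual of a bounded complex of $\DFN$-spaces. The pairing induced by $\Phi$ on these sections agrees by construction with the integration pairing of \cite[Theorem 6.1]{KS96}, and the topological duality proved there identifies the two sides.

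The main obstacle will be making this topological-to-sheaf-theoretic transfer precise, since the $\FN$/$\DFN$ structures naturally live only on the cohomology of sections and, a priori, not in a fully derived sense compatible with restriction. To circumvent this I would proceed by devissage: the t-structure on $\DbholD{X}$ reduces us to a single holonomic $\cD_X$-module $\cM$; resolution of singularities à la Mochizuki--Kedlaya reduces further to the case where $\cM$ is a meromorphic connection with good formal structure along a normal crossing divisor; and refinement of the stratification adapted to $F$ reduces $F$ to a constant sheaf on a subanalytic cell. On these building blocks both complexes become computable via logarithmic and tempered de Rham-type resolutions, and the asserted duality collapses to the Whitney--tempered duality of \cite[Proposition 2.2]{KS96} applied stratum-by-stratum, where the topological structures are manifest and the global-to-local issue disappears.
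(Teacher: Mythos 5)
This statement is quoted from \cite[Theorem 2.5.13]{KS16}; the paper does not reprove it, but its accompanying remark pinpoints exactly the mechanism you describe: the $\R$-constructibility of both complexes (imported from the constructibility of enhanced solutions in \cite{DK16}) is what permits one to ``forget the topology'' and upgrade the $\FN$/$\DFN$ duality of \cite[Theorem 6.1]{KS96} on sections over relatively compact subanalytic opens to a genuine Verdier duality in $\DbRcC{X}$. Your first three paragraphs therefore match the intended argument, and indeed the same scheme is what the paper itself implements on the real blow-up (the pairing, the topological duality of Proposition~\ref{prop:topdualityBU}, and the constructibility-implies-perfectness step of Lemma~\ref{lemma:constrDual}). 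The one place where you overshoot is the final paragraph: the worry about the derived compatibility of the $\FN$/$\DFN$ structures is resolved not by a d\'evissage to good formal structures \`a la Mochizuki--Kedlaya, but simply by the finite-dimensionality of $\RR\Gamma_c(U;\bullet)$ for perfect ($\R$-constructible) complexes, which makes the topological and algebraic duals coincide; the deep structure theorems enter only indirectly, through the cited constructibility result of \cite{DK16}, and the remaining reduction (to $\cM=(\cD_X)_U$ and $F=\C_V$) is the elementary one already present in the proof of \cite[Theorem 6.1]{KS96}.
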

Let us remark that the key in deriving this result was the proof of the $\R$-constructibility of these two objects, which was derived from the constructiblity of enhanced solutions established in \cite{DK16}. In particular, this allows the authors of \cite{KS16} to forget the topology, so this duality is really a duality of sheaves of complex vector spaces.

\subsection{A conjecture of Sabbah}\label{subsec:SabbahProof}

In this subsection, we will prove the following theorem, which is Sabbah's Conjecture, as discussed in Section~\ref{sec:intro} (Theorem~\ref{conj:SabDuality}).
\begin{thm}[{cf.\ \cite[Conjecture 4.13]{Sab21}}]\label{thm:SabDuality}
Let $X$ be a complex manifold, $f\colon X\to \C$ a holomorphic function, and $\cM$ a holonomic $\cD_X$-module. Then there is a natural isomorphism 
$$\mathrm{D}_{\Xf}\DRmodf(\cM)\iso \DRrdf(\mathbb{D}_X\cM)$$
in $\DbRcC{\Xf}$.
\end{thm}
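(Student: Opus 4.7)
The plan is to prove Theorem~\ref{thm:SabDuality} in three stages: first handle the case where $D \vcentcolon= f^{-1}(0)$ is a simple normal crossing divisor using the Kashiwara--Schapira duality recalled above; second, construct for arbitrary $f$ a natural morphism $\DRrdf(\mathbb{D}_X\cM) \to \mathrm{D}_{\Xf}\DRmodf(\cM)$; and third, reduce the general case to the normal crossing case by resolution of singularities. For the first stage, I work initially on $\XD$ (passing to the total real blow-up $\Xtot$ for real analyticity when required). The complex $\DRmodD(\cM)$ is computed from $\AmodD\otimes_{\varpi_D^{-1}\cO_X}\varpi_D^{-1}\cM$, while $\DRrdD(\mathbb{D}_X\cM)$ is computed analogously from the rapid decay sheaf $\cA^{\mathrm{rd}}_{\XD}$. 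Via the identification $\alpha_{\XD}\Ot{\XD}\iso\AmodD$ and the Whitney characterization of $\cA^{\mathrm{rd}}_{\XD}$ (as the pushforward to $\XD$ of $\C_{X\setminus D}\wtens\cO_X$), applying \cite[Theorem 2.5.13]{KS16} with $F = \C_{X\setminus D}$ shows that the pushforwards along $\varpi_D$ of the two sides of the desired isomorphism are Verdier dual on $X$. The local isomorphism on $\XD$ itself then follows from the local nature of Kashiwara--Schapira duality together with standard facts on the proper map $\varpi_D$. Finally, applying $\RR\varpi_{D,f,*}$ transports this to $\Xf$, using the commutativity of Verdier duality with proper direct image and the analogue of Corollary~\ref{cor:differentblowups} for the rapid decay complex.

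For the canonical morphism in the general case, I construct it from the natural multiplication pairing $\Ardf\otimes_{\varpi_f^{-1}\cO_X}\Amodf\To \Ardf$ (the product of a rapid decay function and a moderate growth function has rapid decay) combined with the canonical $\cD_X$-bilinear pairing coming from $\cD$-module duality. Together these induce a pairing
\begin{equation*}
\DRmodf(\cM)\otimes_{k_{\Xf}}\DRrdf(\mathbb{D}_X\cM)\To \omega_{\Xf},
\end{equation*}
equivalently a morphism $\DRrdf(\mathbb{D}_X\cM)\To \mathrm{D}_{\Xf}\DRmodf(\cM)$, which is natural in $\cM$ and, by construction, compatible with proper direct images of $\cD$-modules along modifications.

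For the third stage, given arbitrary $f$, apply resolution of singularities to obtain a proper modification $\tau\colon Y\to X$, which is an isomorphism over $X\setminus f^{-1}(0)$, such that $g\vcentcolon= f\circ\tau$ has $g^{-1}(0)\subset Y$ of simple normal crossings. This lifts to a proper map $\wt{\tau}\colon \wt{Y}_g\to \Xf$. The canonical morphism downstairs is then obtained by applying $\RR\wt{\tau}_*$ to the canonical morphism upstairs for $\mathrm{D}\tau_*\cM$ with respect to $g$, using compatibility of $\cD$-module duality with proper pushforward, of the moderate growth and rapid decay de Rham complexes with proper direct image on the real blow-ups (along the lines of \cite[Proposition 4.7.4]{Moc14}), and of Verdier duality with $\RR\wt{\tau}_*$. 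Since the upstairs morphism is an isomorphism by the normal crossing case applied to $\mathrm{D}\tau_*\cM$, so is the downstairs morphism. The main obstacle I anticipate is in the first stage: reconciling the Kashiwara--Schapira duality (which is stated on $X$ in terms of tempered and Whitney objects) with the desired local duality on the real blow-up $\XD$, because the two dualities \emph{a priori} live on different spaces. Careful bookkeeping is required to translate between the intrinsic sheaves $\AmodD$ and $\cA^{\mathrm{rd}}_{\XD}$ on $\XD$ and their tempered/Whitney counterparts on $X$, and to match shifts and orientation conventions correctly.
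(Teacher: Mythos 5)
Your three-stage architecture coincides with the paper's (normal crossing case, canonical morphism via a multiplication pairing, d\'evissage by resolution), and your second stage is essentially Lemma~\ref{conj:natMorph}. The genuine gap is in stage one, at precisely the point you flag as the main obstacle. Applying \cite[Theorem 2.5.13]{KS16} with $F=\C_{X\setminus D}$ only yields a Verdier duality \emph{on $X$} between $\RR\varpi_{D*}\DRrdD(\mathbb{D}_X\cM)$ and $\RR\varpi_{D*}\DRmodD(\cM)$; this does not descend to a duality on $\XD$, because a proper direct image does not reflect isomorphisms --- the fibres of $\varpi_D$ over points of $D$ are tori, and knowing that the pushforwards are dual says nothing about the sheaves upstairs. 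Invoking ``the local nature of Kashiwara--Schapira duality'' does not help, since localizing on $X$ only localizes the pushforwards. The paper instead re-runs the entire Kashiwara--Schapira argument on the blow-up: it starts from the pairing $(\C_{X^*}\wtens\Cinf{\Xtot})\otimes\alpha_{\Xtot}\RIhom(\C_{X^*},\Dbtv{\Xtot})\to\omega_{\Xtot}$ on the total real blow-up (a genuine real analytic manifold, so \cite[Proposition 2.2]{KS96} applies there), restricts it via $(\itot)^!$ to $\XD$ and identifies the result with the moderate and rapid decay sheaves (Lemma~\ref{lemma:DbtPullback}), and then proves perfectness by redoing the $\FN$/$\DFN$ topological duality with test objects $G\in\DbRcC{\XD}$ living \emph{on the blow-up} (Propositions~\ref{prop:dualitySmoothDist} and~\ref{prop:topdualityBU}). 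Moreover, converting that topological duality into a sheaf-level isomorphism requires knowing that $\DRmodD(\cM)$ is $\R$-constructible on $\XD$ (Lemma~\ref{lemma:constrDual}); this input is supplied by the enhanced-ind-sheaf formula of Proposition~\ref{prop:DRmod} and is absent from your sketch.

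A secondary gap sits in stage three: applying $\RR\wt{\tau}_*$ to the upstairs isomorphism produces the statement for $\mathrm{D}\tau_*\mathrm{D}\tau^*\cM$, not for $\cM$ itself. The paper closes this (Proposition~\ref{prop:conjNotNCD}) by first reducing to $\cM$ concentrated in a single degree and to a local statement on $X$ (so that a resolution with strict normal crossing preimage actually exists), and then using that $\cM$ is a direct summand of $\mathrm{D}\tau_*\mathrm{D}\tau^*\cM$ together with the functoriality of the canonical morphism from stage two. Without that direct-summand step the d\'evissage does not close.
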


In the rest of this subsection, we will first give a proof of this conjecture in the case of a normal crossing divisor using arguments very similar to the ones of Kashiwara--Schapira \cite{KS96}, before proving the general version.

We first show the following variant on the real blow-up of $X$ along a normal crossing divisor.
\begin{prop}\label{prop:conjSabNCD}
Let $D\subset X$ be a simple normal crossing divisor. Let $\cM$ be a holonomic $\cD_X$-module. Then there is an isomorphism in $\DbC{\XD}$
$$\mathrm{D}_{\XD}\DRmodD(\cM)\iso \DRrdD(\mathbb{D}_X\cM).$$
\end{prop}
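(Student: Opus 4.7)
The plan is to reduce the claimed isomorphism to the Kashiwara--Schapira duality between tempered distributions and Whitney $\mathscr{C}^\infty$ functions, recorded as Theorem~2.5.13 of \cite{KS16}. Since $\XD$ is not itself a real analytic manifold but sits as a closed subset of the total real blow-up $\Xtot$ via the closed embedding $\itot\colon \XD\hookrightarrow \Xtot$ introduced in Subsection~\ref{subsec:realblowup}, the first step is to transfer both sides of the conjectured isomorphism to objects on $\Xtot$, where the machinery of \cite{KS96, KS16} applies directly. This is the reason we need the normal crossing hypothesis: without it, no convenient real analytic ambient manifold is available.

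First, I would identify the moderate growth de Rham complex $\DRmodD(\cM)$, after pushforward to $\Xtot$, with an object of the form $\RR\cH om_{\cD_X}(\cM, F\wtens \cO_{\Xtot})$ for an appropriate $F\in\DbRcC{\Xtot}$ encoding both the subset $\XD\subset \Xtot$ and the localization data along $D$. Concretely, this amounts to re-expressing $\Amod$ as the degree-zero $\bar\partial$-cohomology of a Dolbeault-type complex whose entries involve tempered $\mathscr{C}^\infty$ functions with matching support conditions. Symmetrically, I would rewrite the rapid decay complex $\DRrdD(\mathbb{D}_X\cM)$ as $\alpha_{\Xtot}\RIhom(F,\Omega^{\mathrm{t}}_{\Xtot})[d_X]\Ltens_{\cD_X}\cM$ by realizing $\Ard$ as $\bar\partial$-cohomology of Whitney holomorphic forms on $\Xtot$, the classical dual side of the Kashiwara--Schapira pairing.

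With both sides matched, the Kashiwara--Schapira duality then gives
\[
\mathrm{D}_{\Xtot}\bigl(\RR\cH om_{\cD_X}(\cM, F\wtens \cO_{\Xtot})\bigr) \iso \alpha_{\Xtot}\RIhom(F,\Omega^{\mathrm{t}}_{\Xtot})[d_X]\Ltens_{\cD_X}\cM,
\]
and pulling back via the closed embedding $\itot$ produces the claimed isomorphism on $\XD$. All objects in sight are $\R$-constructible (by Lemma~\ref{lemma:constructible} combined with the known constructibility of moderate growth and rapid decay de Rham complexes of holonomic $\cD$-modules), so the duality statement takes place at the level of sheaves of $\C$-vector spaces in $\DbRcC{\XD}$, without any topological subtleties.

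The main obstacle I anticipate is pinning down the sheaf $F\in\DbRcC{\Xtot}$ that correctly encodes the moderate growth / rapid decay structures on the non-manifold space $\XD$, and checking it functorially in $\cM$. Concretely, one must verify, in local coordinates adapted to each stratum of $\partial\XD$, that $F\wtens \cO_{\Xtot}$ and $\alpha_{\Xtot}\RIhom(F,\Omega^{\mathrm{t}}_{\Xtot})$, once restricted to $\XD$ and Dolbeault-resolved, recover the complexes $\Amod\otimes_{\varpi_D^{-1}\cO_X}\varpi_D^{-1}\cM$ and $\Ard\otimes_{\varpi_D^{-1}\cO_X}\varpi_D^{-1}\cM$ respectively, in a $\cD_X$-equivariant way. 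The normal crossing hypothesis reduces this to an explicit calculation in polydisc-type coordinates; once the moderate growth side is matched, the rapid decay side follows automatically by symmetry of the Kashiwara--Schapira duality.
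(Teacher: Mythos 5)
Your overall strategy --- pass to the total real blow-up $\Xtot$, which unlike $\XD$ is a real analytic manifold, and invoke the Kashiwara--Schapira duality between Whitney and tempered objects, using $\R$-constructibility to make it a sheaf-level Verdier duality --- is exactly the route the paper takes. However, there are concrete problems. First, you have the two sides of the Kashiwara--Schapira pairing swapped: the Whitney functor $F\wtens\cO$ produces functions flat along the boundary, i.e.\ the \emph{rapid decay} side, while $\alpha\RIhom(F,\Omega^{\mathrm{t}})$ is the tempered, i.e.\ \emph{moderate growth}, side. The paper accordingly realizes $\Ard$ via $(\C_{X^*}\wtens\Cinf{\Xtot})|_{\XD}$ and $\Amod$ via $\alpha_{\XD}\varpi^!\RIhom(\C_{X^*},\Ot{X})$, so that $\DRrdD(\mathbb{D}_X\cM)\iso\RR\cH om_{\varpi^{-1}\cD_X}(\varpi^{-1}\cM,\Ard)[d_X]$ sits on the $\RR\cH om(\cM,F\wtens\cO)$ side. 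Your proposed local verification that $F\wtens\cO$ recovers $\Amod\otimes_{\varpi_D^{-1}\cO_X}\varpi_D^{-1}\cM$ cannot succeed, since moderate growth functions are not Whitney functions. (Relatedly, $\Omega^{\mathrm{t}}_{\Xtot}$ does not literally make sense: $\Xtot$ carries no complex structure, so the Dolbeault passage must be performed on $\XD$ using the $\varpi^{-1}\cD_{\overline{X}}$-module structure, as in Lemma-Definition~\ref{lemmadefi:OtXf} and equations \eqref{eq:mod}--\eqref{eq:rd}.)

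Second, and more seriously, you cannot simply quote \cite[Theorem 2.5.13]{KS16} and conclude that there are ``no topological subtleties'': that theorem is a duality on the complex manifold $X$, whereas what is needed is a Verdier duality in $\DbRcC{\XD}$. The paper must therefore (i) rerun the $\FN$/$\DFN$ topological duality of \cite{KS96} on the blow-up, testing against arbitrary $G\in\DbRcC{\XD}$ (Propositions~\ref{prop:dualitySmoothDist} and \ref{prop:topdualityBU}) --- this is what localizes the statement on $\XD$; (ii) prove the nontrivial identification $(\itot)^!\alpha_{\Xtot}\RIhom(\C_{X^*},\Dbtv{\Xtot})\iso\alpha_{\XD}\varpi^!\RIhom(\C_{X^*},\Dbtv{X})$ (Lemma~\ref{lemma:DbtPullback}, which uses the $2^r$-component structure of $(\varpitot)^{-1}(X^*)$) in order to recognize $\Amod$ on the blow-up; and (iii) upgrade the topological duality to a perfect sheaf pairing using the $\R$-constructibility of the \emph{moderate growth} complex alone (Lemma~\ref{lemma:constrDual}), which is available from Proposition~\ref{prop:DRmod}. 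On this last point your argument is circular: the $\R$-constructibility of $\DRrdD(\mathbb{D}_X\cM)$ is not ``known'' at this stage --- it is an output of the duality (via Lemma~\ref{lemma:constrDual} and ultimately Corollary~\ref{cor:DRrdXD}), since Lemma~\ref{lemma:constructible} only applies to the rapid decay object once it has been identified with $(\DRE(\mathbb{D}_X\cM))^{\mathrm{rd}\,D}$, which is itself a consequence of the theorem being proved.
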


Let us first rewrite the objects in the statement of Proposition~\ref{prop:conjSabNCD} in terms of (holomorphic) tempered distributions and Whitney functions, as studied in \cite{KS96}, for example. 

In the following, we will just write $\wt{X}$, $\varpi$ for $\XD$, $\varpi_D$. Let $\Xtot$ be the total real blow-up of $X$ along a normal crossing divisor, and let $\widetilde{X}\subset \Xtot$ be the real blow-up (see Section~\ref{subsec:realblowup} for these constructions).

For the sheaf of moderate growth holomorphic functions, we have
\begin{align*}
    \Amod &\iso \alpha_{\wt{X}} \Ot{\wt{X}} \iso \alpha_{\wt{X}} \varpi^! \RIhom(\C_{X^*},\cO^\mathrm{t}_{X})
\end{align*}
and hence the moderate growth de Rham complex of a holonomic $\cD_X$-module $\cM$ is
\begin{align}\label{eq:mod}
\DRmod(\cM)&\iso (\varpi^{-1}\Omega_X \otimes_{\varpi^{-1}\cO_X} \Amod) \Ltens_{\varpi^{-1}\cD_X} \varpi^{-1} \cM\notag \\
&\iso \alpha_{\widetilde{X}} \varpi^! \RIhom(\C_{X^*},\Omega^\mathrm{t}_{X}) \Ltens_{\varpi^{-1}\cD_X} \varpi^{-1}\cM\\
&\iso \alpha_{\wt{X}} \Omega^\mathrm{t}_{\wt{X}} \Ltens_{\varpi^{-1}\cD_X} \varpi^{-1}\cM\notag
\end{align}

On the other hand, the sheaf of smooth functions on $\wt{X}$ with rapid decay at $\partial\wt{X}$ is
$$\mathscr{C}^{\infty,\mathrm{rd}}_{\widetilde{X}} \vcentcolon= (\C_{X^*}\overset{\mathrm{w}}{\otimes} \mathscr{C}^\infty_{\Xtot})\big|_{\widetilde{X}}$$
and the sheaf of holomorphic function with rapid decay at the boundary is then
$$\Ard \vcentcolon= \RR \cH om_{\varpi^{-1}\cD_{\overline{X}}}(\varpi^{-1}\cO_{\overline{X}}, \mathscr{C}^{\infty,\mathrm{rd}}_{\widetilde{X}}),$$
and the rapid decay de Rham complex of the dual of $\cM$ is
\begin{align}\label{eq:rd}
	\DRrd(\mathbb{D}_X\cM) &\iso (\varpi^{-1}\Omega_X \otimes_{\varpi^{-1}\cO_X} \Ard) \Ltens_{\varpi^{-1}\cD_X} \varpi^{-1} \mathbb{D}_X\cM\notag \\
	&\iso \RR \cH om_{\varpi^{-1}\cD_X}(\varpi^{-1} \cM, \Ard)[d_X]
\end{align}
\\
In order to prove Proposition~\ref{prop:conjSabNCD}, the aim is to get a duality between \eqref{eq:mod} and \eqref{eq:rd}, and this reminds us of the duality in \cite[(2.5.12), Theorem 2.5.13]{KS16}. However, our statement is a duality on the real blow-up and it does not follow directly from loc.~cit., but our proof will proceed along the same lines.

We will first construct a pairing between the two objects \eqref{eq:mod} and \eqref{eq:rd}.

\subsubsection*{A pairing on $\Xtot$}
Since $\Xtot$ is a real analytic manifold, we get from \cite[(2.5.11)]{KS16} a pairing
\begin{equation}\label{eq:pairingXtot}
	(\C_{X^*}\wtens \Cinf{\Xtot}) \otimes \alpha_{\Xtot}\RIhom(\C_{X^*},\Dbtv{\Xtot}) \to \omega_{\Xtot}.
\end{equation}

\subsubsection*{The induced pairing on $\widetilde{X}$}
Equation \eqref{eq:pairingXtot} is equivalent to a morphism
$$ \alpha_{\Xtot}\RIhom(\C_{X^*},\Dbtv{\Xtot}) \to \mathrm{D}_{\Xtot} (\C_{X^*}\wtens \Cinf{\Xtot}). $$
Applying the exceptional inverse image along the (closed) embedding $\itot\colon \widetilde{X}\hookrightarrow\Xtot$, we obtain
\begin{equation}\label{eq:dualityMorphBUtot}
	(\itot)^! \alpha_{\Xtot}\RIhom(\C_{X^*},\Dbtv{\Xtot}) \to  \mathrm{D}_{\widetilde{X}} (\itot)^{-1} (\C_{X^*}\wtens \Cinf{\Xtot}).
\end{equation}

Now observe that the left-hand side can be manipulated as follows:
\begin{align}\label{eq:alphaDbtemp}
	(\itot)^! \alpha_{\Xtot}\RIhom(\C_{X^*}&,\Dbtv{\Xtot}) \notag\\
	&\iso (\itot)^{-1} \RR \cH om\big(\C_{\widetilde{X}},\alpha_{\Xtot}\RIhom(\C_{X^*},\Dbtv{\Xtot})\big)\notag \\
	&\iso (\itot)^{-1} \RR \cH om\big(\beta_{\Xtot}\C_{\widetilde{X}},\RIhom(\C_{X^*},\Dbtv{\Xtot})\big)\notag \\
	&\iso  (\itot)^{-1} \alpha_{\Xtot} \RIhom(\beta_{\Xtot}\C_{\widetilde{X}}\otimes \iota_{\Xtot}\C_{X^*},\Dbtv{\Xtot})\notag \\
	&\iso  \alpha_{\widetilde{X}} (\itot)^{-1} \RIhom(\C_{X^*},\Dbtv{\Xtot})
\end{align}
Here, the second isomorphism follows from \cite[Proposition 5.1.10]{KS01}, and the last one from the fact that $\beta_{\Xtot}\C_{\widetilde{X}}\otimes \iota_{\Xtot}\C_{X^*}\iso \iota_{\Xtot}\C_{X^*}$ (see \cite{KS01} and also \cite[Proof of Lemma 3.2]{Ho22}).

Next, let us prove the following result similar to \cite[Lemma 2.5.7]{KS16}.
\begin{lemma}\label{lemma:DbtPullback}
	Set $Y\vcentcolon=(\varpitot)^{-1}(X^*)$. Then there is an isomorphism
	\begin{equation}\label{eq:shriekDb}
		\RIhom(\C_Y,\Dbtv{\Xtot})\iso (\varpitot)^! \RIhom(\C_{X^*}, \Dbtv{X}).
	\end{equation}
\end{lemma}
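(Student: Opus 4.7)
The plan is to mimic the proof of \cite[Lemma 2.5.7]{KS16}. First, I would apply the standard adjunction $f^!\RIhom(F, G) \iso \RIhom(f^{-1}F, f^!G)$ for ind-sheaves, with $f = \varpitot$, $F = \C_{X^*}$ and $G = \Dbtv{X}$. Since $Y = (\varpitot)^{-1}(X^*)$ by definition, we have $(\varpitot)^{-1}\C_{X^*} \iso \C_Y$, so the right-hand side of \eqref{eq:shriekDb} is rewritten as $\RIhom(\C_Y, (\varpitot)^!\Dbtv{X})$. It therefore suffices to produce a canonical isomorphism
$$
\RIhom(\C_Y, \Dbtv{\Xtot}) \iso \RIhom(\C_Y, (\varpitot)^!\Dbtv{X}).
$$

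Next, I would construct a canonical morphism $\Dbtv{\Xtot} \to (\varpitot)^!\Dbtv{X}$ as follows. Since $\varpitot$ is proper one has $(\varpitot)_{!!} \iso (\varpitot)_*$, and integration of distribution densities along the fibers of $\varpitot$ provides a natural morphism $(\varpitot)_*\Dbtv{\Xtot} \to \Dbtv{X}$, which by adjunction yields the required morphism. Applying $\RIhom(\C_Y, \cdot)$ then gives the candidate isomorphism. The verification is local on $\Xtot$: over $Y$ it is evident because $\varpitot|_Y\colon Y \to X^*$ is a real analytic diffeomorphism; and in local coordinates $(r_1, \theta_1, \ldots, r_r, \theta_r, z_{r+1}, \ldots, z_n)$ near $\partial\Xtot$, the map $\varpitot$ takes the form $(r_j, \theta_j) \mapsto r_j e^{i\theta_j}$, so that $|r_j| = |z_j|$, whence tempered growth along $\partial\Xtot$ matches tempered growth along $D$.

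The main obstacle will be making this last identification rigorous, specifically matching the ind-structures on tempered distribution densities, which are governed by the subanalytic Grothendieck topology and its behavior under $\varpitot$. I would address this directly on the subanalytic site, using that $\varpitot$ is a proper morphism of subanalytic spaces compatible with relatively compact subanalytic open coverings, so that tempered sections of $\Dbtv{\Xtot}$ with controlled growth at $\partial\Xtot$ correspond to tempered sections of $\Dbtv{X}$ with controlled growth at $D$. This step parallels the analogous argument in \cite[Lemma 2.5.7]{KS16}, which treats $\Dbt{\bullet}$ rather than $\Dbtv{\bullet}$ and a modification rather than a real blow-up, and I expect the same techniques to apply here with only cosmetic changes.
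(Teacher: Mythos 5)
Your opening move coincides with the paper's: apply the adjunction to rewrite $(\varpitot)^!\RIhom(\C_{X^*},\Dbtv{X})$ as $\RIhom((\varpitot)^{-1}\C_{X^*},(\varpitot)^!\Dbtv{X})\iso\RIhom(\C_Y,(\varpitot)^!\Dbtv{X})$, reducing everything to comparing $(\varpitot)^!\Dbtv{X}$ with $\Dbtv{\Xtot}$ after applying $\RIhom(\C_Y,\cdot)$. From there the two arguments diverge. The paper does not construct an integration morphism and re-verify growth estimates; it quotes \cite[Theorem 2.5.6]{KS16}, which gives $(\varpitot)^!\Dbtv{X}\iso \Dbtv{\Xtot}\Ltens_{\cD_{\Xtot}}\cD_{\Xtot\to X}$ globally on $\Xtot$, and then observes that, locally, $Y=(\varpitot)^{-1}(X^*)$ is a disjoint union of $2^r$ components $X^*\times\{\nu\}$, each mapped isomorphically onto $X^*$, so that $\cD_{\Xtot}\to\cD_{\Xtot\to X}$ is an isomorphism over $Y$ and the transfer factor disappears after decomposing $\C_Y$ into these summands. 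All the analytic content is thereby outsourced to the cited theorem, and what remains is purely formal.

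The part of your proposal that is not yet a proof is exactly the part you flag as ``the main obstacle'': upgrading the coordinate observation $|r_j|=|z_j|$ to an isomorphism of ind-sheaves of tempered distribution densities. Matching the inductive systems over relatively compact subanalytic opens, for distributions rather than functions, is precisely the content of Kashiwara's tempered pullback theorem, so your sketch amounts to re-proving a special case of \cite[Theorem 2.5.6]{KS16}; the efficient fix is to cite it and let the rest become diagram chasing, as the paper does. Two smaller points. First, $\varpitot|_Y\colon Y\to X^*$ is not a diffeomorphism: it is (locally) a $2^r$-sheeted covering, one sheet for each choice of signs of the radial coordinates, with each sheet mapping isomorphically onto $X^*$; a local isomorphism still suffices for your ``over $Y$ it is evident'' step, but the decomposition of $\C_Y$ into these $2^r$ summands is how the computation is actually organized, so it is worth stating correctly. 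Second, your route introduces a specific comparison morphism (integration along the fibers of the proper map $\varpitot$ followed by adjunction) and must then show that \emph{this} morphism becomes an isomorphism after $\RIhom(\C_Y,\cdot)$; the paper's chain of isomorphisms sidesteps this extra verification entirely.
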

\begin{proof}
	Note that, locally, $Y$ is the disjoint union of $2^r$ connected components ($r$ being the number of smooth components of the normal crossing divisor $D$), each homeo\-morphically mapped to $X^*$ via $\varpitot$. Let us write $Y=\bigsqcup_{\nu\in\{+,-\}^r} X^*\times\{\nu\}$. (Recall that $X^*\times \{+,\ldots,+\}$ is the component canonically identified with $X^*$ in $\widetilde{X}\subset \Xtot$.)
	Then we have
	\begin{align*}
		(\varpitot)^!\RIhom(\C_{X^*},\Dbtv{X}) &\iso \RIhom\big((\varpitot)^{-1}\C_{X^*},(\varpitot)^!\Dbtv{X}\big)\\
		&\iso \bigoplus_{\nu\in\{+,-\}^r} \RIhom\big(\C_{X^*\times\{\nu\}},(\varpitot)^!\Dbtv{X}\big).
	\end{align*}
	Now, by \cite[Theorem 2.5.6]{KS16}, one has $(\varpitot)^!\Dbtv{X}\iso \Dbtv{\Xtot}\Ltens_{\cD_{\Xtot}}\cD_{\Xtot\to X}$, and since $\cD_{\Xtot}\to\cD_{\Xtot\to X}$ is an isomorphism on each $X^*\times\{\nu\}$, we obtain the desired result.
\end{proof}
Applying the functor $(\itot)^!$ to the isomorphism \eqref{eq:shriekDb}, the left-hand side is the following:
\begin{align*}
	(\itot)^! \RIhom(\C_Y,\Dbtv{\Xtot}) &\iso (\itot)^{-1}\RIhom(\C_{\widetilde{X}},\RIhom(\C_Y,\Dbtv{\Xtot}))\\
	&\iso (\itot)^{-1}\RIhom(\C_{\widetilde{X}}\otimes \C_Y,\Dbtv{\Xtot})\\
	&\iso (\itot)^{-1}\RIhom(\C_{X^*},\Dbtv{\Xtot})
\end{align*}
Hence, taking together the isomorphism \eqref{eq:shriekDb} with our computation \eqref{eq:alphaDbtemp}, we obtain
\begin{align}\label{eq:AmodManipulation}
    (\itot)^! \alpha_{\Xtot}\RIhom(\C_{X^*},\Dbtv{\Xtot}) &\iso \alpha_{\widetilde{X}} (\itot)^! (\varpitot)^! \RIhom(\C_{X^*}, \Dbtv{X}) \\
    &\iso \alpha_{\widetilde{X}} \varpi^! \RIhom(\C_{X^*}, \Dbtv{X})\notag
\end{align}
and hence the morphism \eqref{eq:dualityMorphBUtot} yields a pairing
\begin{equation}\label{eq:pairingBU}
(\itot)^{-1} (\C_{X^*}\wtens \Cinf{\Xtot}) \otimes \alpha_{\widetilde{X}} \varpi^! \RIhom(\C_{X^*}, \Dbtv{X}) \to \omega_{\widetilde{X}}.
\end{equation}

\subsubsection*{The pairing between de Rham functors}
Making the transition to Dolbeault complexes (i.e., going to the holomorphic setting) and adding a $\cD$-module, the above pairing will induce a pairing
\begin{equation}\label{eq:pairingDR}
\DRrd(\mathbb{D}_X\cM) \otimes \DRmod(\cM) \to \omega_{\widetilde{X}}
\end{equation}
(just as (2.5.11) induces (2.5.12) in \cite{KS16}).

\subsubsection*{Perfectness of the pairing}
We first prove an analogue (indeed, a corollary) of \cite[Proposition 2.2]{KS96}, which will be the key to the proof of the perfectness.
\begin{prop}\label{prop:dualitySmoothDist}
	There exist natural topologies of type $\FN$ on $\Gamma(\wt{X};(\itot)^{-1}(\C_{X^*}\wtens\Cinf{\widetilde{X}}))$ and of type $\DFN$ on $\Gamma_c(\wt{X};(\itot)^{-1}\RR \cH om(\C_{X^*},\Dbtv{\Xtot}))$ and the two spaces are dual to each other with respect to these topologies.
\end{prop}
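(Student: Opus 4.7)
The strategy is to apply the Kashiwara--Schapira duality \cite[Proposition 2.2]{KS96} to the real analytic manifold $\Xtot$ with the $\R$-constructible sheaf $F = \C_{X^*}$. This immediately endows the spaces $\Gamma(\Xtot; \C_{X^*}\wtens\Cinf{\Xtot})$ and $\Gamma_c(\Xtot; \alpha_{\Xtot}\RIhom(\C_{X^*}, \Dbtv{\Xtot}))$ with natural topologies of type $\FN$ and $\DFN$ respectively, dual to each other in the sense of topological vector spaces.

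The remaining task is to transport this duality to $\wt X$ along the closed embedding $\itot$. For this I would verify that both sheaves are supported in $\wt X$. For the Whitney tensor product, sections of $\C_{X^*}\wtens\Cinf{\Xtot}$ are smooth functions that are Whitney-flat on $\Xtot\setminus X^*$; on any open subset of $\Xtot\setminus \overline{X^*}^{\Xtot} = \Xtot\setminus \wt X$ such a function (being identically equal to its $0$-th derivative) vanishes, so the support of this sheaf lies in $\wt X$. For the tempered distribution side, any point $p \notin \wt X$ admits an open neighborhood disjoint from $X^*$, which forces the stalk of $\alpha_{\Xtot}\RIhom(\C_{X^*}, \Dbtv{\Xtot})$ at $p$ to vanish. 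Since $\itot$ is a closed embedding, the adjunction $(\itot)_*(\itot)^{-1}\cF \iso \cF$ for any $\cF$ supported in $\wt X$ yields canonical isomorphisms
\[ \Gamma(\Xtot; \cF) \iso \Gamma(\wt X; (\itot)^{-1}\cF), \qquad \Gamma_c(\Xtot; \cF) \iso \Gamma_c(\wt X; (\itot)^{-1}\cF), \]
through which the $\FN$ and $\DFN$ topologies, together with the duality pairing, are transported to the spaces appearing in the statement.

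The main obstacle is that the ambient real analytic manifold $\Xtot$ is only locally intrinsically defined (see Remark 7.1.1 of \cite{DK16}). I would resolve this by running the above argument locally on a cover of $\wt X$ by subanalytic open subsets on which $\Xtot$ is defined, and then verifying that the resulting topologies glue on overlaps. This gluing step is formal since the resulting topologies are characterized intrinsically by sections of sheaves defined on $\wt X$ itself (namely the Whitney and tempered-distribution variants of $\Amod$ and $\Ard$), so different local choices of $\Xtot$ yield identical topologies; the functoriality of the construction in \cite{KS96} with respect to restrictions to subanalytic opens makes the compatibility automatic.
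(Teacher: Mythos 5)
Your proposal is correct and follows essentially the same route as the paper: apply \cite[Proposition 2.2]{KS96} on the real analytic manifold $\Xtot$ with $F=\C_{X^*}$, observe that both sheaves are supported on the closed subset $\wt{X}\subset\Xtot$, and identify global (resp.\ compactly supported) sections over $\Xtot$ with those over $\wt{X}$. Your additional remarks on verifying the supports and on the fact that $\Xtot$ is only locally intrinsically defined are sound elaborations of points the paper leaves implicit.
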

\begin{proof}
	Since $\Xtot$ is a real analytic manifold, \cite[Proposition 2.2]{KS96} gives us a (topological) duality between the $\FN$ space $\Gamma(\Xtot;\C_{X^*}\wtens\Cinf{\Xtot})$ and the $\DFN$ space $\Gamma_c(\Xtot;\RIhom(\C_{X^*},\Dbtv{\Xtot}))$. To conclude, we observe that both $\C_{X^*}\wtens\Cinf{\Xtot}$ and $\RIhom(\C_{X^*},\Dbtv{\Xtot})$ are supported on the closed subspace $\widetilde{X}\subset \Xtot$, and hence
	$$\Gamma\big(\wt{X};(\itot)^{-1}(\C_{X^*}\wtens\Cinf{\Xtot})\big) \iso \Gamma\big(\Xtot;\C_{X^*}\wtens\Cinf{\Xtot}\big)$$
	and
	$$\Gamma_c\big(\wt{X};(\itot)^{-1}\RR \cH om(\C_{X^*},\Dbtv{\Xtot})\big)\iso \Gamma_c\big(\Xtot;\RIhom(\C_{X^*},\Dbtv{\Xtot})\big).$$
\end{proof}

This implies (going to the holomorphic setting similarly to \cite[Proposition 5.2]{KS96}) a duality between the objects
$$\RR \Gamma(\wt X;\Ard)\in \mathrm{D}^\mathrm{b}(\FN)$$
and
$$\RR \Gamma_c(\wt X;\alpha_{\widetilde{X}}\Omega^\mathrm{t}_{\widetilde{X}}[d_X])\in \mathrm{D}^\mathrm{b}(\DFN).$$

We can then use the technique of \cite[Theorem 6.1 and Appendix]{KS96} to prove the following.
\begin{prop}\label{prop:topdualityBU}
	Let $\cM\in\DbholD{X}$ and let $G\in\DbRcC{\widetilde{X}}$. Then we can define
	$$\RR \Gamma\big(\wt X;\RR\cH om_{\varpi^{-1}\cD_X}(\varpi^{-1}\cM\otimes G,\Ard)\big)\iso \RR \mathrm{Hom}\big(G,\DRrd(\mathbb{D}_X\cM)\big)\in \mathrm{D}^\mathrm{b}(\FN)$$
	and
	$$\RR\Gamma_c\big(\wt X;\DRmod(\cM)\otimes G\big)\in \mathrm{D}^\mathrm{b}(\DFN)$$
	and these are dual to each other.
\end{prop}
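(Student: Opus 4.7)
The plan is to transport the strategy of \cite[Theorem 6.1 and Appendix]{KS96}, which establishes the analogous global duality on a complex manifold, to the real blow-up $\wt{X}$ equipped with the sheaves $\Amod$ and $\Ard$. The isomorphism
\[
\RR\Gamma\big(\wt{X};\RR\cH om_{\varpi^{-1}\cD_X}(\varpi^{-1}\cM\otimes G,\Ard)\big)\iso \RR\mathrm{Hom}\big(G,\DRrd(\mathbb{D}_X\cM)\big)
\]
is a purely formal consequence of formula \eqref{eq:rd} and Hom--tensor adjunction, so the content of the statement lies in endowing the two objects with topologies of type $\FN$ and $\DFN$ respectively and proving their topological duality. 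The anchor of the whole argument will be the absolute case $\cM = \cO_X$, $G = \C_{\wt{X}}$, in which, after passing to Dolbeault resolutions exactly as in \cite[§5]{KS96}, the duality reduces directly to Proposition~\ref{prop:dualitySmoothDist}.

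From there I would proceed by a two-step dévissage. First, I would extend to arbitrary $G\in\DbRcC{\wt{X}}$. By the standard dévissage for $\R$-constructible sheaves, it would suffice to treat $G = \C_U$ for subanalytic open subsets $U\subseteq\wt{X}$, and such a $G$ reduces to the absolute case upon restriction to $U$. The key observation is that both functors in $G$ transform distinguished triangles of $\R$-constructible sheaves into distinguished triangles in $\mathrm{D}^\mathrm{b}(\FN)$ and $\mathrm{D}^\mathrm{b}(\DFN)$ respectively, so the abstract duality formalism in these topological derived categories developed in the appendix of \cite{KS96} guarantees that the topological duality descends through cones.

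Second, I would extend to an arbitrary $\cM\in\DbholD{X}$. Since $\Amod$ and $\Ard$ are flat over $\varpi^{-1}\cO_X$, a finite local resolution of $\cM$ by free $\cD_X$-modules of finite rank is transformed by both functors (in $\cM$) into distinguished triangles in the respective topological derived categories; the same dévissage formalism then reduces matters to $\cM = \cD_X$, which follows from the previous step together with the identification $\Amod\otimes_{\varpi^{-1}\cO_X}\varpi^{-1}\cD_X\iso \cD^{\cA}_{\wt{X}}$ (and analogously for $\Ard$).

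The main obstacle I expect is not any single reduction above, but the careful bookkeeping of the $\FN$/$\DFN$ topologies through both dévissages, and the verification that the pairing \eqref{eq:pairingDR} is compatible with the topological duality at every intermediate stage. Exactly as in \cite{KS96}, the technical crux will be ensuring enough compactness of supports so that all the relevant topological vector spaces genuinely lie in $\mathrm{D}^\mathrm{b}(\FN)$ or $\mathrm{D}^\mathrm{b}(\DFN)$; the properness of $\varpi\colon \wt{X}\to X$ should make this manageable in essentially the same way as in the unblown-up setting of \emph{loc.\ cit.}
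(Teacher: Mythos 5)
Your proposal follows essentially the same route as the paper: the paper's proof likewise invokes the dévissage machinery of \cite[Theorem 6.1 and Appendix]{KS96} to reduce to the case $\cM=(\cD_X)_U$ and $G=\C_V$ for relatively compact (subanalytic) opens, at which point everything comes down to Proposition~\ref{prop:dualitySmoothDist} and its holomorphic analogue. Your extra remarks on flatness of $\Amod$ and $\Ard$ over $\varpi^{-1}\cO_X$ and on the compactness of supports correctly identify why the reduction is to \emph{relatively compact} opens, exactly as in \emph{loc.\ cit.}
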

\begin{proof}
	By the methods developed in loc.~cit. (whose details we will not elaborate on here, see the proof of \cite[Theorem 6.1]{KS96}), the proof can be reduced to the case $\cM=(\cD_X)_{U}$ for some relatively compact open $U\subset X$ and $G=\C_V$ for some relatively compact open subanalytic $V\subset\wt{X}$, and this reduces the statement to Proposition~\ref{prop:dualitySmoothDist} and its holomorphic analogue.
\end{proof}

The final step in proving Proposition~\ref{prop:conjSabNCD} is now the following lemma, which is analogous to the idea of \cite[Lemma 2.5.12]{KS16}.

\begin{lemma}\label{lemma:constrDual}
	If one of the objects $\DRrd(\mathbb{D}_X\cM)$ or $\DRmod(\cM)$ is $\mathbb{R}$-constructible, then the pairing \eqref{eq:pairingDR} is perfect and the objects $\DRrd(\mathbb{D}_X\cM)$ and $\DRmod(\cM)$ are dual to each other in the category $\DbRcC{\widetilde{X}}$. In particular, the other object is also $\R$-constructible.
\end{lemma}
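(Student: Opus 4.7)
Assume without loss of generality that $\DRmod(\cM)$ is $\R$-constructible; the other case will be symmetric up to the biduality that appears at the end. The plan is first to unwind the pairing \eqref{eq:pairingDR} into a canonical morphism
$$\varphi \colon \DRrd(\mathbb{D}_X\cM) \To \mathrm{D}_{\widetilde{X}}\DRmod(\cM)$$
in $\DbC{\widetilde{X}}$. Since $\DRmod(\cM)$ is $\R$-constructible, so is $\mathrm{D}_{\widetilde{X}}\DRmod(\cM)$, and once $\varphi$ is shown to be an isomorphism, the $\R$-constructibility of $\DRrd(\mathbb{D}_X\cM)$ as well as the perfectness of the pairing follow automatically.

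To establish that $\varphi$ is an isomorphism, I will test it on $\RR\Gamma(V; -)$ for $V \subset \widetilde{X}$ a relatively compact subanalytic open subset, since such $V$'s form a basis of neighbourhoods of every point. On the target, standard Verdier duality on the good topological space $\widetilde{X}$ (applicable since $k_V \otimes \DRmod(\cM)$ is $\R$-constructible) gives
$$\RR\Gamma\big(V; \mathrm{D}_{\widetilde{X}}\DRmod(\cM)\big) \iso \RR\Gamma_c\big(V; \DRmod(\cM)\big)^{\vee}.$$
On the source, Proposition~\ref{prop:topdualityBU} applied with $G = k_V$ identifies $\RR\Gamma(V; \DRrd(\mathbb{D}_X\cM))$ with the $\FN$-dual of $\RR\Gamma_c(V; \DRmod(\cM))$. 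The $\R$-constructibility hypothesis forces each cohomology group of $\RR\Gamma_c(V; \DRmod(\cM))$ to be finite dimensional, so the topological and algebraic duals coincide in each degree. Combining the two identifications, $\varphi$ induces an isomorphism on $\RR\Gamma(V;-)$ for every such $V$, hence a stalkwise isomorphism, hence an isomorphism in $\DbC{\widetilde{X}}$. The symmetric isomorphism $\DRmod(\cM) \iso \mathrm{D}_{\widetilde{X}}\DRrd(\mathbb{D}_X\cM)$ then follows by biduality on $\R$-constructible objects.

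The main technical obstacle will be a compatibility check: one must verify that the identification of $\RR\Gamma(V; \DRrd(\mathbb{D}_X\cM))$ with $\RR\Gamma_c(V; \DRmod(\cM))^{\vee}$ produced by $\varphi$ followed by Verdier duality coincides with the one coming directly from Proposition~\ref{prop:topdualityBU}. Tracing the pairing \eqref{eq:pairingDR} back to its origin in the Kashiwara--Schapira pairing \eqref{eq:pairingXtot}, transported to $\widetilde{X}$ via $(\itot)^!$ and then passed to the Dolbeault setting, should make this compatibility a formal consequence of the fact that all maps in sight are induced by the same underlying pairing; nevertheless, carefully keeping track of the adjunction morphisms and integration pairings involved will be the real technical heart of the argument.
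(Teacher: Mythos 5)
Your proposal is correct and follows essentially the same route as the paper: assume $\DRmod(\cM)$ is $\R$-constructible, convert the pairing \eqref{eq:pairingDR} into a morphism towards the Verdier dual, test it on $\RR\Gamma_c(U;\bullet)$ for relatively compact subanalytic opens $U$, and use Proposition~\ref{prop:topdualityBU} together with the finite-dimensionality of the cohomology of $\RR\Gamma_c(U;\DRmod(\cM))$ (perfectness of the complex, \cite[Corollary 8.4.11]{KS90}) to identify the topological dual with the algebraic one. The compatibility check you flag as the technical heart is indeed implicit in the paper's one-line appeal to Proposition~\ref{prop:topdualityBU}, whose duality is by construction induced by the same Kashiwara--Schapira pairing, so your outline matches the intended argument.
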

\begin{proof}
	Assume that $\DRmod(\cM)$ is $\R$-constructible.
	Apply the functor $\RR \Gamma_c(U;\bullet)$ for any relatively compact open subanalytic $U\subset \widetilde{X}$ to the morphism
	$$\DRmod(\cM)\to\mathrm{D}_{\widetilde{X}} \DRrd(\mathbb{D}_X\cM)$$
	coming from the pairing \eqref{eq:pairingDR} to obtain
	$$\RR\Gamma(U;\DRrd(\mathbb{D}_X\cM)) \to \RR\Gamma_c(U;\DRmod(\cM))^*.$$
	Since $\DRmod(\cM)$ was assumed to be $\mathbb{R}$-constructible, we know (see \cite[Corollary 8.4.11]{KS90}) that $\RR\Gamma_c(U;\DRmod(\cM))$ is a perfect complex and therefore has, in particular, finite-dimensional cohomologies, and hence the topological dual and the usual dual as a vector space coincide.
	This morphism is an isomorphism by Proposition~\ref{prop:topdualityBU}, which concludes the proof.
	
	The case where $\DRrd(\cM)$ is $\R$-constructible is similar.
\end{proof}

We can now finally prove the above duality statement in the case of a normal crossing divisor.

\begin{proof}[Proof of Proposition~\ref{prop:conjSabNCD}]
By Lemma~\ref{lemma:constrDual}, we are reduced to proving that the complex $\DRmod(\cM)$ is $\R$-constructible. This is, however, clear by (a version on $\XD$ of) Lemma~\ref{lemma:constructible} since we have already proved $\DRmodD(\cM)\iso \shbuD\big(\EE \Dj_*\EE j^{-1}\DRE(\cM)\big)$ (see Proposition~\ref{prop:DRmod}) and we know that $\DRE(\cM)$ is $\R$-constructible (see \cite[Theorem 9.3.2]{DK16}). This concludes the proof.
\end{proof}

The assertion of Theorem~\ref{thm:SabDuality} in the case where $f^{-1}(0)$ is a simple normal crossing divisor now follows as an easy corollary.

\begin{cor}\label{cor:dualityIsoXfNCD}
Let $f\colon X\to \C$ be a holomorphic function such that $D\defeq f^{-1}(0)$ is a simple normal crossing divisor. Let $\cM$ be a holonomic $\cD_X$-module. Then there is an isomorphism in $\DbC{\Xf}$
$$\mathrm{D}_{\Xf}\DRmodf(\cM)\iso \DRrdf(\mathbb{D}_X\cM).$$
\end{cor}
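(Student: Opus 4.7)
The plan is to deduce Corollary~\ref{cor:dualityIsoXfNCD} from Proposition~\ref{prop:conjSabNCD} by pushing forward the duality isomorphism along the natural proper morphism $\varpi_{D,f}\colon \XD \to \Xf$ that appeared in the proof of Corollary~\ref{cor:differentblowups}.

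First, I would apply $\RR{\varpi_{D,f}}_*$ to the isomorphism $\mathrm{D}_{\XD}\DRmodD(\cM)\iso\DRrdD(\mathbb{D}_X\cM)$ of Proposition~\ref{prop:conjSabNCD}. Since $\varpi_{D,f}$ is proper, one has $\RR{\varpi_{D,f}}_*\iso\RR{\varpi_{D,f}}_!$, and Verdier duality commutes with this pushforward, giving $\RR{\varpi_{D,f}}_*\mathrm{D}_{\XD}\iso\mathrm{D}_{\Xf}\RR{\varpi_{D,f}}_*$. Combined with the moderate-growth direct-image formula $\RR{\varpi_{D,f}}_*\DRmodD(\cM)\iso\DRmodf(\cM)$ used in the proof of Corollary~\ref{cor:differentblowups} (and based on \cite[Proposition~4.7.4]{Moc14}), the left-hand side $\RR{\varpi_{D,f}}_*\mathrm{D}_{\XD}\DRmodD(\cM)$ becomes $\mathrm{D}_{\Xf}\DRmodf(\cM)$, which is exactly the target.

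The remaining task, and the main obstacle of the plan, is to identify $\RR{\varpi_{D,f}}_*\DRrdD(\mathbb{D}_X\cM)$ with $\DRrdf(\mathbb{D}_X\cM)$; this is the rapid-decay analog of the moderate-growth direct-image formula cited above. I would approach it by first establishing the sheaf-level identification $\RR{\varpi_{D,f}}_*\cA^\mathrm{rd}_{\XD}\iso\cA^\mathrm{rd}_{\Xf}$: since $\varpi_{D,f}$ is proper and an isomorphism over $X^*$, this reduces to a local computation at a boundary point of $\XD$. At such a point, only some subset $\{\rho_i\}_{i\in I}$ of the polar radii vanish, while the complementary factors $\prod_{j\notin I}\rho_j$ are bounded and bounded away from zero on a neighborhood; consequently $|u|\leq C_N|f|^N=C_N\prod_k \rho_k^N$ is equivalent to $|u|\leq C_N'\prod_{i\in I}\rho_i^N$, which means rapid decay on $\Xf$ pulls back to (and pushes forward from) rapid decay on $\XD$. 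The flatness of $\cA^\mathrm{rd}$ over $\varpi^{-1}\cO_X$ recalled after Definition~\ref{def:modDRsabbah} then promotes this sheaf-level identification to the level of de Rham complexes via a projection-formula argument along the same lines as \cite[Proposition~4.7.4]{Moc14}.

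Concatenating everything, the chain
\begin{align*}
\mathrm{D}_{\Xf}\DRmodf(\cM) &\iso \mathrm{D}_{\Xf}\RR{\varpi_{D,f}}_*\DRmodD(\cM) \\
&\iso \RR{\varpi_{D,f}}_*\mathrm{D}_{\XD}\DRmodD(\cM) \\
&\iso \RR{\varpi_{D,f}}_*\DRrdD(\mathbb{D}_X\cM) \\
&\iso \DRrdf(\mathbb{D}_X\cM)
\end{align*}
yields the desired isomorphism in $\DbC{\Xf}$.
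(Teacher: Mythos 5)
Your proposal is correct and is essentially the paper's own proof: push the isomorphism of Proposition~\ref{prop:conjSabNCD} forward along the proper map $\varpi_{D,f}\colon\XD\to\Xf$, use properness to commute with Verdier duality, and identify the direct images of the two de Rham complexes with growth conditions. The only difference is that you sketch the rapid-decay direct-image formula $\RR{\varpi_{D,f}}_*\DRrdD\iso\DRrdf$ by hand, whereas the paper simply cites \cite[Proposition~4.7.4]{Moc14}, which covers the rapid-decay case as well as the moderate-growth one.
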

\begin{proof}
This follows directly from Proposition~\ref{prop:conjSabNCD} by applying the direct image functor along the natural morphism $\varpi_{D,f}\colon \XD\to\Xf$. We note that this morphism is proper and hence self-dual, and that this direct image turns the moderate growth (resp. rapid decay) de Rham complex on $\XD$ into the moderate growth (resp.\ rapid decay) de Rham complex on $\Xf$ (see \cite[Proposition 4.7.4]{Moc14}).
\end{proof}

Let us now consider the case of a complex manifold $X$ and a holomorphic function $f\colon X\to \C$ where $f^{-1}(0)$ does not necessarily define a simple normal crossing divisor. The techniques of resolution of singularities allow us to reduce the general case to the one proved above, once we have the following lemma.
\begin{lemma}\label{conj:natMorph}
	For any complex manifold $X$, any holomorphic function $f\colon X\to \C$ and any object $\cM\in \DbholD{X}$, there exists a natural morphism
	\begin{equation}\label{eq:natMorphXf}
		\DRrdf(\D_X\cM)\to \mathrm{D}_{\widetilde{X}_f}\DRmodf(\cM),
	\end{equation}
functorial in $\cM$.
\end{lemma}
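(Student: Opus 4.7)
The plan is to combine the local duality for enhanced ind-sheaves on $\Xf$ (Proposition~\ref{prop:duality1}) with the NCD case of Sabbah's conjecture (Corollary~\ref{cor:dualityIsoXfNCD}) via resolution of singularities. First, applying Proposition~\ref{prop:duality1} to $K = \DRE(\cM) \in \EbRcIC{X}$ and using Proposition~\ref{prop:DRmodf} to identify $K^{\mathrm{mod}\, f} \iso \DRmodf(\cM)$, I would obtain a canonical isomorphism
$$
\mathrm{D}_{\Xf}\DRmodf(\cM) \iso (\DE \DRE(\cM))^{\mathrm{rd}\, f}.
$$
Invoking the standard compatibility $\DE \DRE(\cM) \iso \DRE(\mathbb{D}_X\cM)$ for holonomic $\cM$ (see \cite[§9]{DK16}, up to a possible shift), this rewrites as
$$
\mathrm{D}_{\Xf}\DRmodf(\cM) \iso (\DRE(\mathbb{D}_X\cM))^{\mathrm{rd}\, f}.
$$
It therefore suffices to construct, functorially in $\cN \in \DbholD{X}$, a natural morphism
$$
\eta_\cN \colon \DRrdf(\cN) \To (\DRE(\cN))^{\mathrm{rd}\, f},
$$
and then take $\cN = \mathbb{D}_X \cM$.

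For the construction of $\eta_\cN$, I would invoke Hironaka's theorem to pick a proper modification $\tau \colon Y \to X$ such that $E \vcentcolon= \tau^{-1}(f^{-1}(0))$ is a simple normal crossing divisor, set $g \vcentcolon= f \circ \tau$, and consider the resulting proper lifted map $\wt{\tau}_g \colon \wt{Y}_E \to \Xf$ (as in diagram~\eqref{eq:diagramResSing}). On $\wt{Y}_E$ the analogous comparison morphism is already an isomorphism: combining the rewriting above (applied on $Y$ with $g$ in place of $f$) with the NCD case supplied by Corollary~\ref{cor:dualityIsoXfNCD} yields a natural isomorphism $\mathrm{DR}^{\mathrm{rd}}_{\wt{Y}_E}(\cN') \iso (\DRE(\cN'))^{\mathrm{rd}\, g}$ for every holonomic $\cN'$ on $Y$. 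Taking $\cN' \vcentcolon= \mathrm{D}\tau^* \cN$ and pushing this isomorphism forward along $\wt{\tau}_g$, using the compatibility of $\mathrm{DR}^{\mathrm{rd}}$ with proper direct image (analogously to \cite[Proposition 4.7.4]{Moc14}) and of the sheafification functor with proper pushforward of enhanced ind-sheaves (\cite[Lemma 3.9]{DK21sh}), together with the $\cD$-module adjunction morphism $\cN \to \mathrm{D}\tau_* \mathrm{D}\tau^* \cN$ (an isomorphism over $X \setminus f^{-1}(0)$), then produces the desired $\eta_\cN$ on $\Xf$.

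The main technical obstacle will be verifying that $\eta_\cN$ thus constructed is independent of the chosen resolution $\tau$, which will then yield functoriality in $\cN$. I expect this to be handled by the standard argument that any two resolutions admit a common refinement, combined with the naturality of all base-change and adjunction isomorphisms invoked above; the only delicate point is matching up the two compatibilities (for $\mathrm{DR}^{\mathrm{rd}}$ on the classical side and for $\shbuf \circ \EE \fj_{!!}$ on the enhanced side) under a common pushforward, which should nevertheless be routine given the functoriality of the real blow-up construction recorded in Subsection~\ref{subsec:realblowup}.
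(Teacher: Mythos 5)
Your route is genuinely different from the paper's, and as written it has two real gaps. First, the direction of the adjunction: pushing the normal-crossing isomorphism forward along $\wt\tau_g$ only yields an isomorphism $\DRrdf(\mathrm{D}\tau_*\mathrm{D}\tau^*\cN)\iso\bigl(\DRE(\mathrm{D}\tau_*\mathrm{D}\tau^*\cN)\bigr)^{\mathrm{rd}\,f}$, and the unit $\cN\to\mathrm{D}\tau_*\mathrm{D}\tau^*\cN$ induces morphisms \emph{into} both sides, so you cannot simply compose to obtain $\eta_\cN\colon\DRrdf(\cN)\to(\DRE(\cN))^{\mathrm{rd}\,f}$. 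To repair this you must show that both functors annihilate the cone of the unit, i.e., kill complexes supported on $f^{-1}(0)$; on the enhanced side this is immediate from the presence of $\EE j^{-1}$, but on the classical side it is an extra lemma (using flatness of $\Ardf$ over $\varpi_f^{-1}\cO_X$ and $\Ardf\otimes_{\varpi_f^{-1}\cO_X}\varpi_f^{-1}\cO_X(*D)\iso\Ardf$) that you neither state nor prove. Second, and more seriously, the well-definedness: the entire content of the lemma is the existence of a \emph{canonical} morphism functorial in $\cM$ --- the isomorphism property is established separately in Proposition~\ref{prop:conjNotNCD} by a d\'evissage that \emph{presupposes} such a functorial morphism. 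Your candidate depends on the chosen resolution $\tau$, and independence of $\tau$ (together with compatibility with morphisms $\cM'\to\cM$, which need not lift to a single common resolution without further refinement) is precisely the hard point; deferring it as ``routine'' leaves the main burden of the proof unaddressed.

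The paper avoids all of this by constructing the morphism directly and without any choices: it writes $\DRrdf(\D_X\cM)\iso\sHom_{\varpi_f^{-1}\cD_X}(\varpi_f^{-1}\cM,\Ardf)[d_X]$, applies the functor $(\varpi_f^{-1}\Omega_X\otimes_{\varpi_f^{-1}\cO_X}\Amodf)\Ltens_{\varpi_f^{-1}\cD_X}(\bullet)$, uses the multiplication pairing $\Amodf\otimes_{\varpi_f^{-1}\cO_X}\Ardf\to\Ardf$, and concludes via $\DRrdf(\cO_X)\iso\tilde j_!\C_{X^*}[d_X]$ and the trace morphism $\tilde j_!\C_{X^*}[2d_X]\to\omega_{\Xf}$. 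Neither resolution of singularities nor enhanced ind-sheaves enter at this stage. If you wish to salvage your approach, be aware that it would in effect prove the stronger statement of Corollary~\ref{cor:DRrdXD} directly, and you would then have to carry out the independence-of-resolution argument in full rather than defer it.
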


\begin{proof}
First, note that we have an isomorphism
	\begin{equation}\label{DRrd}
		\DRrdf(\D_X\cM) \iso \sHom_{\varpi_f^{-1}\cD_X}(\varpi_f^{-1}\cM,\Ardf)[d_X]
	\end{equation}
for any coherent $\cD_X$-module $\cM$. (This is proved by standard arguments, similarly to \cite[Lemma~5.5.1]{Moc14}, for instance.) Here, $d_X$ denotes the (complex) dimension of $X$.

Note moreover that we have a natural ($\varpi_f^{-1}\cD_X$-linear) morphism
\begin{equation}\label{mult}
\Amodf \otimes_{\varpi_f^{-1}\cO_X} \Ardf\to \Ardf
\end{equation}
given by multiplication of functions. (Recall that $\Amodf$ and $\Ardf$ are flat over $\varpi_f^{-1}\cO_X$.)

With this in hand, we construct the desired morphism as follows:
	\begin{align*}
		\DRrdf(\D_X\cM) &\overset{\eqref{DRrd}}{\iso} \sHom_{\varpi_f^{-1}\cD_X}(\varpi_f^{-1}\cM,\Ardf)[d_X]\\
		&\to \sHom_{\C_{\Xf}}\big(\DRmodf(\cM),\varpi_f^{-1}\Omega_X\Ltens_{\varpi_f^{-1}\cD_X} (\Amodf \otimes_{\varpi_f^{-1}\cO_X}\Ardf)\big)[d_X]\\
		&\overset{\eqref{mult}}{\to} \sHom_{\C_{\Xf}}(\DRmodf(\cM),\varpi_f^{-1}\Omega_X\Ltens_{\varpi_f^{-1}\cD_X} \Ardf)[d_X]\\
		&\iso \sHom_{\C_{\Xf}}\big(\DRmodf(\cM),\DRrdf(\cO_X)\big)[d_X]\\
		&\iso \sHom_{\C_{\Xf}}\big(\DRmodf(\cM),\widetilde{j}_!\C_{X^*}[d_X]\big)[d_X]\\
		&\to \sHom_{\C_{\Xf}}\big(\DRmodf(\cM),\omega_{\Xf}\big) = \mathrm{D}_{\Xf}\DRmodf(\cM).
	\end{align*}

The morphism in the second line follows by applying the functor $$(\varpi_f^{-1}\Omega_X\otimes_{\varpi_f^{-1}\cO_X} \Amodf)\Ltens_{\varpi_f^{-1}\cD_X}(\bullet)$$ to both components.

The isomorphism in the fifth line follows since
$$\DRrdf(\cO_X)\iso \widetilde{j}_!\C_{X^*}[d_X],$$
where $\widetilde{j}\colon X^*\hookrightarrow \Xf$ is the embedding (see, for example, \cite[Example 4.12]{Sab21}).

The last isomorphism follows from the natural morphism $\widetilde{j}_!\C_{X^*}[2d_X]\to \omega_{\Xf}$, which comes as the adjoint to the identity $\C_{X^*}[2d] \iso \omega_{X^*} \iso \widetilde{j}^!\omega_{\Xf}$.
\end{proof}

We will now show that this morphism is in fact an \emph{isomorphism}, yielding the following statement.
\begin{prop}\label{prop:conjNotNCD}
	For any complex manifold $X$, any holomorphic function $f\colon X\to \C$ and any $\cM\in\DbholD{X}$, the morphism \eqref{eq:natMorphXf} is an isomorphism.

 In particular, we obtain an isomorphism $$\DRrdf(\cM)\iso \shbuf\big(\EE \fj_{!!}\EE j^{-1}\DRE(\cM)\big).$$
 
\end{prop}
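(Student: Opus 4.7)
The plan is to reduce the general case to the simple normal crossing case (Corollary~\ref{cor:dualityIsoXfNCD}) via a standard d\'evissage using resolution of singularities. Since the assertion is local on $X$, Hironaka's theorem furnishes a projective modification $\tau\colon Y\to X$ such that $E\vcentcolon=\tau^{-1}(f^{-1}(0))$ is a simple normal crossing divisor in $Y$ with smooth components; set $g\vcentcolon= f\circ\tau$, so that $E=g^{-1}(0)$, and let $\widetilde{\tau}_g\colon \widetilde{Y}_g\to\Xf$ denote the induced proper morphism of real blow-ups. Both sides of~\eqref{eq:natMorphXf} are insensitive to $(*D)$-localization for $D\vcentcolon= f^{-1}(0)$, since $\Amodf$ and $\Ardf$ are naturally $\cO_X(*D)$-modules; hence we may assume $\cM\iso \cM(*D)$. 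For such $\cM$, the object $\cN\vcentcolon= \mathrm{D}\tau^*\cM$ is a holonomic $\cD_Y(*E)$-module, and the $\cD$-module projection formula combined with $\mathrm{R}\tau_*\cO_Y\iso \cO_X$ (a standard fact for resolutions of smooth manifolds) yields $\mathrm{D}\tau_*\cN\iso \cM$.

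Since $E\subset Y$ is a simple normal crossing divisor, Corollary~\ref{cor:dualityIsoXfNCD} applied to $(Y,g,\cN)$ supplies an isomorphism $\mathrm{DR}^{\mathrm{rd}}_{\widetilde{Y}_g}(\D_Y\cN)\iso \mathrm{D}_{\widetilde{Y}_g}\mathrm{DR}^{\mathrm{mod}}_{\widetilde{Y}_g}(\cN)$. We apply $\mathrm{R}(\widetilde{\tau}_g)_*$ and combine three standard compatibilities: (i) the direct image formula for moderate growth de Rham complexes \cite[Proposition~4.7.4]{Moc14}, giving $\mathrm{R}(\widetilde{\tau}_g)_*\mathrm{DR}^{\mathrm{mod}}_{\widetilde{Y}_g}(\cN)\iso \DRmodf(\mathrm{D}\tau_*\cN)$, together with its rapid decay analogue; (ii) the intertwining $\mathrm{D}\tau_*\circ\D_Y\iso \D_X\circ\mathrm{D}\tau_*$ for the proper map $\tau$; and (iii) the commutation of proper pushforward with Verdier duality, $\mathrm{R}(\widetilde{\tau}_g)_*\circ\mathrm{D}_{\widetilde{Y}_g}\iso \mathrm{D}_{\Xf}\circ\mathrm{R}(\widetilde{\tau}_g)_*$, valid since $\widetilde{\tau}_g$ is proper. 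Chaining these together with $\mathrm{D}\tau_*\cN\iso \cM$ yields an abstract isomorphism $\DRrdf(\D_X\cM)\iso \mathrm{D}_{\Xf}\DRmodf(\cM)$.

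To conclude the first statement, one verifies that the isomorphism so produced coincides with the canonical morphism of Lemma~\ref{conj:natMorph}. This is a naturality check: the construction in Lemma~\ref{conj:natMorph} relies only on natural data (the multiplication $\Amodf\otimes\Ardf\to\Ardf$, the computation $\DRrdf(\cO_X)\iso \widetilde{j}_!\C_{X^*}[d_X]$, and standard adjunctions), all of which are functorial under the proper pushforward $\mathrm{R}(\widetilde{\tau}_g)_*$; hence the morphism for $\cM=\mathrm{D}\tau_*\cN$ is obtained by applying $\mathrm{R}(\widetilde{\tau}_g)_*$ to the analogous morphism for $\cN$ on $Y$, which is already an isomorphism by the NCD case. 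For the ``in particular'' statement, apply the main isomorphism with $\cM$ replaced by $\D_X\cM$ to get $\DRrdf(\cM)\iso \mathrm{D}_{\Xf}\DRmodf(\D_X\cM)$; then Proposition~\ref{prop:DRmodf} and the local duality (Proposition~\ref{prop:duality1} applied to $K=\DRE(\D_X\cM)$) rewrite this as $\shbuf(\EE\fj_{!!}\EE j^{-1}\mathrm{D}^\EE_X\DRE(\D_X\cM))$, and one concludes via the standard compatibility $\mathrm{D}^\EE_X\DRE(\D_X\cM)\iso \DRE(\cM)$ (up to an appropriate shift) from \cite{DK16}.

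The principal technical hurdle is the rapid decay version of compatibility~(i), which is less prominently recorded in the literature than its moderate growth counterpart. However, once the moderate growth compatibility is in hand on $\widetilde{Y}_g$, Corollary~\ref{cor:dualityIsoXfNCD} converts it (via duality on $\widetilde{Y}_g$) into the desired rapid decay statement, and pushforward along the proper $\widetilde{\tau}_g$ using (iii) transfers it to $\Xf$.
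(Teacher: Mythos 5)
Your proposal is correct and follows the same overall architecture as the paper: reduce to a local statement, resolve singularities so that $E=g^{-1}(0)$ has simple normal crossings, invoke Corollary~\ref{cor:dualityIsoXfNCD} on $Y$, and push forward along the proper map using \cite[Corollary 4.7.3]{Moc14} together with the commutation of proper direct images with duality on both the $\cD$-module and the constructible side. The one genuine divergence is the final descent step. The paper does not localize: it applies the functorial morphism \eqref{eq:natMorphXf} to $\mathrm{D}\tau_*\mathrm{D}\tau^*\cM$ and concludes via the fact that $\cM$ is a \emph{direct summand} of $\mathrm{D}\tau_*\mathrm{D}\tau^*\cM$, using compatibility of the canonical morphism with direct sums. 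You instead first replace $\cM$ by $\cM(*D)$ (legitimate, since $\Amodf$ and $\Ardf$ are $\cO_X(*D)$-modules, $\DRrdf(\cN(!D))\iso\DRrdf(\cN)$, and the canonical morphism is functorial along $\cM\to\cM(*D)$), after which $\mathrm{D}\tau^*\cM$ is localized along $E$ and $\mathrm{D}\tau_*\mathrm{D}\tau^*\cM\iso\cM$ on the nose. This trades the direct-summand input for a localization argument and is arguably cleaner for non-semisimple $\cM$; be aware, though, that your stated justification via $\mathrm{R}\tau_*\cO_Y\iso\cO_X$ is slightly off, since the $\cD$-module direct image $\mathrm{D}\tau_*\cO_Y$ acquires extra summands supported on $D$ — the correct statement is $\mathrm{D}\tau_*\bigl(\cO_Y(*E)\bigr)\iso\cO_X(*D)$ (equivalently, the cone of the adjunction unit is supported on $D$ and dies after localization). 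Both you and the paper ultimately rely on the same unproved-but-routine compatibility, namely that the pushed-forward isomorphism agrees with the canonical morphism of Lemma~\ref{conj:natMorph}; you flag this explicitly, which is a point in your favor. Your derivation of the ``in particular'' statement matches the argument of Corollary~\ref{cor:DRrdXD}.
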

\begin{proof}
	First, note that if $\cM'\To\cM\To\cM''\ToPO$ is a distinguished triangle in $\DbholD{X}$, then this induces a morphism of distinguished triangles
	$$\begin{tikzcd}
		\DRrdf(\D_X\cM') \arrow{r}\arrow{d} & \DRrdf(\D_X\cM) \arrow{r}\arrow{d} & \DRrdf(\D_X\cM'') \arrow{r}{+1} \arrow{d} & \text{}\\
		\mathrm{D}_{\widetilde{X}_f}\DRmodf(\cM') \arrow{r} & \mathrm{D}_{\widetilde{X}_f}\DRmodf(\cM) \arrow{r} & \mathrm{D}_{\widetilde{X}_f}\DRmodf(\cM'') \arrow{r}{+1} & \text{}
	\end{tikzcd}$$
	and, by the axioms of a triangulated category, if two of the vertical arrows are isomorphisms, so is the third. Hence, by a standard induction argument on the amplitude of $\cM$ (see, e.g., \cite[Proof of Lemma 7.3.7]{DK16}), we can reduce to the case when $\cM$ is concentrated in degree $0$.
	
	Moreover, if $U\subset X$ is open, then $f$ defines a holomorphic function on $U$, and we denote by $\widetilde{U}_f\subset \widetilde{X}_f$ the corresponding real blow-up space. One has $$\mathrm{DR}^\mathrm{rd}_{\widetilde{U}_f}(\mathbb{D}_U \cM|_U) \iso \DRrdf(\mathbb{D}_X\cM)|_{\widetilde{U}_f}$$ and $$\mathrm{D}_{\widetilde{U}_f}\mathrm{DR}^\mathrm{mod}_{\widetilde{U}_f}(\cM|_U) \iso (\mathrm{D}_{\widetilde{X}}\DRmodf(\cM))|_{\widetilde{U}_f}.$$
	Therefore, \eqref{eq:natMorphXf} is an isomorphism for $\cM$ if it is one for all $\cM|_{U_i}$ on a covering $X=\bigcup_{i\in I} U_i$; in other words, the statement is local. We can therefore assume that there exist a complex manifold $Y$ and a projective morphism $\tau\colon Y\to X$ such that, if we set $g\vcentcolon= f\circ \tau$, $E\vcentcolon= g^{-1}(0) = \tau^{-1}(f^{-1}(0))\subset Y$ is a simple normal crossing divisor.
	
	We know from Corollary~\ref{cor:dualityIsoXfNCD} that
	$$\mathrm{DR}^\mathrm{rd}_{\widetilde{Y}_g}(\D_Y \mathrm{D}\tau^*\cM)\to \mathrm{D}_{\wt{Y}_g}\mathrm{DR}^\mathrm{mod}_{\widetilde{Y}_g}(\mathrm{D}\tau^*\cM)$$
	is an isomorphism. Applying the functor $\mathrm{D}\tau_*$ to this isomorphism, we obtain (by \cite[Corollary 4.7.3]{Moc14} and the well-known fact that direct images for constructible sheaves and holonomic $\cD$-modules along proper maps commute with duality)
	$$\DRrdf(\D_X\mathrm{D}\tau_*\mathrm{D}\tau^*\cM)\overset{\sim}{\To} \mathrm{D}_{\Xf}\DRrdf(\mathrm{D}\tau_*\mathrm{D}\tau^*\cM).$$
	
	Now it remains to note that $\cM$ is a direct summand of $\mathrm{D}\tau_*\mathrm{D}\tau^*\cM$ and since the morphism \eqref{eq:natMorphXf} is functorial (in particular, compatible with direct sums), we deduce the isomorphism
	$$\DRrdf(\D_X\cM)\overset{\sim}{\To} \mathrm{D}_{\Xf}\DRmodf(\cM)$$
    as desired.

    The second part of the proposition follows easily (as in Corollary~\ref{cor:DRrdXD}).
\end{proof}

\begin{rem}
	Note that the proof just given might appear similar to the method given in \cite[Lemma 7.3.7]{DK16}, where the authors show how to reduce a proof of a statement involving a complex manifold and a holonomic $\cD$-module to the case of a $\cD$-module with normal form on a manifold with a normal crossing divisor, using the classificaion results due to C.\ Sabbah \cite{Sab00}, K.\ Kedlaya \cite{Ked10,Ked11} and T.\ Mochizuki \cite{Moc09,Moc11}. However, here we do not use any reduction on the form of our $\cD$-module (such as a normal form or a good filtration). Indeed, the reduction we perform is only to the case where the divisor has normal crossings (and the $\cD$-module is still arbitrary holonomic).
 \end{rem}

 We have thus completed the proof of Theorem~\ref{thm:SabDuality}.

 With this result in hand, we can now derive the statement about the expression for the moderate de Rham complex in terms of the enhanced de Rham complex, which has been motivated in \eqref{eq:ideaDRrd}, showing that the expression we were expecting for the rapid decay de Rham complex in terms of the enhanced de Rham complex is indeed true.

\begin{cor}\label{cor:DRrdXD}
If $D\subset X$ is a simple normal crossing divisor, we have an isomorphism
$$\DRrdD(\cM)\iso \shbuD\big(\EE \Dj_{!!}\EE j^{-1}\DRE(\cM)\big).$$
If $f\colon X\to \C$ is a holomorphic function, we have an isomorphism
$$\DRrdf(\cM)\iso \shbuf\big(\EE \fj_{!!}\EE j^{-1}\DRE(\cM)\big).$$
\end{cor}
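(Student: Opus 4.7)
The plan is to derive this corollary by concatenating three already-established results: the moderate-growth formula (Propositions~\ref{prop:DRmod} and~\ref{prop:DRmodf}), Sabbah's duality (Proposition~\ref{prop:conjSabNCD} and Theorem~\ref{thm:SabDuality}), and the local duality pairing for enhanced ind-sheaves (Proposition~\ref{prop:duality1}). The result is essentially a clean distillation of the ``in particular'' statement already appearing in Proposition~\ref{prop:conjNotNCD}, so no new technical input is needed.

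Concretely, for the case of $\Xf$, I would proceed as follows. By Proposition~\ref{prop:DRmodf}, $\DRmodf(\cM) \iso (\DRE(\cM))^{\mathrm{mod}\, f}$. Since $\DRE(\cM) \in \EbRcIC{X}$, Proposition~\ref{prop:duality1} yields
$$
\mathrm{D}_{\Xf} (\DRE(\cM))^{\mathrm{mod}\, f} \iso (\DE \DRE(\cM))^{\mathrm{rd}\, f} = \shbuf\bigl(\EE \fj_{!!} \EE j^{-1} \DE \DRE(\cM)\bigr).
$$
On the other hand, Sabbah's duality (Theorem~\ref{thm:SabDuality}) gives $\mathrm{D}_{\Xf}\DRmodf(\cM) \iso \DRrdf(\mathbb{D}_X\cM)$. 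Combining these two identifications produces
$$
\DRrdf(\mathbb{D}_X\cM) \iso \shbuf\bigl(\EE \fj_{!!} \EE j^{-1} \DE \DRE(\cM)\bigr).
$$
To finish, I would invoke the compatibility of the enhanced de Rham functor with duality, $\DE \DRE(\cM) \iso \DRE(\mathbb{D}_X\cM)$ (the enhanced analogue of the classical $\mathrm{D}_X \DR(\cM) \iso \DR(\mathbb{D}_X\cM)$ for holonomic $\cD$-modules, available in the D'Agnolo--Kashiwara framework), and then substitute $\mathbb{D}_X\cM$ for $\cM$ using the involutivity $\mathbb{D}_X^2 \iso \id$ on $\DbholD{X}$. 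This yields the desired isomorphism $\DRrdf(\cM) \iso \shbuf(\EE \fj_{!!} \EE j^{-1} \DRE(\cM))$.

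The case of $\XD$ for a simple normal crossing divisor $D$ is completely analogous: one substitutes Proposition~\ref{prop:DRmod} for the moderate-growth formula, Proposition~\ref{prop:conjSabNCD} for Sabbah's duality on $\XD$, and the $\XD$-analogue of Proposition~\ref{prop:duality1} (whose existence was noted in the remark following Definition~\ref{def:enhmodrd}; it has the same proof, with $\Dj$ replacing $\fj$). The only point requiring care---and the main potential obstacle---is the identification $\DE \DRE(\cM) \iso \DRE(\mathbb{D}_X\cM)$, which must be stated with the correct normalization so that no spurious shifts appear; this follows from the conventions fixed in Section~\ref{sec:review} (in particular, the ``perverse'' normalization of $\DR$ making $\DR(\cM)$ a perverse sheaf for $\cM\in\mathrm{Mod}_{\mathrm{hol}}(\cD_X)$) together with the standard duality compatibility of the enhanced de Rham functor on $\DbholD{X}$.
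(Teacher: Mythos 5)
Your proposal is correct and follows essentially the same route as the paper: it combines the moderate-growth formula (Proposition~\ref{prop:DRmodf}), Sabbah's duality (Theorem~\ref{thm:SabDuality}), the interchange of the enhanced operations under duality, and the compatibility $\DE\DRE(\cM)\iso\DRE(\mathbb{D}_X\cM)$ from \cite[Theorem 9.4.8]{DK16}, finishing by involutivity of duality on $\R$-constructible objects. The only cosmetic difference is that you invoke Proposition~\ref{prop:duality1} as a packaged statement where the paper unwinds the same chain of isomorphisms inline.
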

\begin{proof}
The proofs of the two statements are identical, so let us only give the argument for $\Xf$.

Recall (from Proposition~\ref{prop:DRmodf}) that
$$\DRmodf(\cM)\iso \shbuf(\EE \fj_*\EE j^{-1}\DRE(\cM)).$$
Then the assertion of the corollary is just the dual of this isomorphism: The left-hand sides are dual to each other by Theorem~\ref{thm:SabDuality} just proved. For the right-hand sides, we see that
\begin{align*}
    \shbuf(\EE \fj_*\EE j^{-1}\DRE(\mathbb{D}_X\cM))&\iso \shbuf(\EE \fj_*\EE j^{-1}\mathrm{D}^\EE_{X}\DRE(\cM))\\
    &\iso \shbuf(\mathrm{D}^\EE_{\Xf}\EE \fj_{!!}\EE j^!\DRE(\cM))\\
    &\iso \mathrm{D}_{\Xf}\shbuf(\EE \fj_{!!}\EE j^{-1}\DRE(\cM))
\end{align*}
since duality commutes with the enhanced de Rham functor (see \cite[Theorem 9.4.8]{DK16} and the sheafification functor (see \cite[Proposition 3.13]{DK21sh}), and duality interchanges $\EE\fj_*$ with $\EE \fj_{!!}$ and $\EE j^{-1}$ with $\EE j^!$ (see \cite[Proposition 3.3.3]{DK19}). Note that $\EE j^{-1}\iso \EE j^!$ since $j$ is an open embedding. Because all the objects involved are $\R$-constructible (and hence duality is an involution on these objects), this concludes the proof.
\end{proof}

\subsection{Duality pairings for algebraic connections}\label{subsec:dualityBEH}

A well-known result of S.\ Bloch and H.\ Esnault (in dimension one, see \cite{BE04}) and M.\ Hien (for dimensions greater than one, see \cite{Hie09} and also \cite{Hie07}) shows that, for a flat algebraic connection $(E,\nabla)$ on a smooth quasi-projective variety $U$, there is a perfect pairing between algebraic de Rham cohomology with coefficients in $(E,\nabla)$ and rapid decay homology cycles, inducing a duality between the respective (co-)homology spaces (see \ref{eqn:integralpairing} for the precise statement). Concretely, the pairing can be realized as an integration, or period, pairing of differential forms $\alpha \otimes e$ with coefficients in $E$ and of chains $\gamma \otimes \epsilon^\vee$ with coefficients in $E^\vee$ having rapid decay along the boundary:
\begin{equation}\label{eqn:periodpairing}
( \alpha \otimes e, \gamma \otimes \epsilon^\vee) \mapsto \int_\gamma \langle e,\epsilon^\vee \rangle \, \alpha
\end{equation}
where $\langle e,\epsilon^\vee\rangle$ denotes the natural contraction $E \otimes_{\cO_U} E^\vee \to \mathcal{O}_{U}$.

\begin{prop}\label{cor:ModRdpairing1}
Let $(E,\nabla)$ be a flat (algebraic) connection on a smooth quasi-projective variety $U$ over $\C$, and let $(E^\vee,\nabla^\vee)$ be the dual connection on $U$. Then there is a perfect pairing of finite-dimensional $\C$-vector spaces
\begin{equation}\label{eqn:integralpairing}
    \mathrm{H}_{\mathrm{dR}}^\ell\big(U;(E,\nabla)\big) \otimes \mathrm{H}_\ell^{\mathrm{rd}}\big(U;(E^\vee,\nabla^\vee)\big) \to \C
\end{equation}
where $\mathrm{H}_{\mathrm{dR}}$ denotes algebraic de Rham cohomology, and $\mathrm{H}^{\mathrm{rd}}$ denotes rapid decay homology (see \cite{BE04} and \cite{Hie09}).
\end{prop}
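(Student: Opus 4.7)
The plan is to realize the integration pairing \eqref{eqn:periodpairing} as the $\ell$-th global duality pairing (in the sense of Proposition \ref{prop:pairingref}) associated with a local duality isomorphism on a suitable compactification, namely the one supplied by our proof of Sabbah's conjecture in the simple normal crossing case (Proposition \ref{prop:conjSabNCD}). By Hironaka's resolution of singularities, I first choose a smooth projective compactification $j\colon U \hookrightarrow X$ such that $D \vcentcolon= X \setminus U$ is a simple normal crossing divisor, and set $\cM \vcentcolon= j_+(E,\nabla) \iso (E(*D),\nabla)$, viewed as a holonomic $\cD_X$-module. Under $\cD$-module duality, $\mathbb{D}_X\cM$ is (up to shift and interchange of $(*D)$- with $(!D)$-localization) the meromorphic extension of the dual connection $(E^\vee,\nabla^\vee)$; the localization-insensitivity noted after Definition \ref{def:enhmodrd} guarantees that this subtlety has no effect on the rapid decay de Rham complex.

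The next step is to identify the two (co)homology theories appearing in \eqref{eqn:integralpairing} with hypercohomologies on the real blow-up $\wt X_D$. On the de Rham side, Deligne's comparison theorem identifies $\mathrm{H}_{\mathrm{dR}}^\ell(U;(E,\nabla))$ with $\hyp^{\ell - d_X}(X;\DR(\cM))$, and the classical isomorphism $\DR(\cM) \iso \RR\varpi_{D,*}\DRmodD(\cM)$ (cf.\ \cite[Proposition 4.7.4]{Moc14}) then yields
$$\mathrm{H}_{\mathrm{dR}}^\ell(U;(E,\nabla)) \iso \hyp^{\ell - d_X}\bigl(\wt X_D;\DRmodD(\cM)\bigr).$$
On the homology side, Hien's sheaf-theoretic description of rapid decay homology on the real blow-up (see \cite{Hie09}, and \cite{BE04} in the one-dimensional case) provides
$$\mathrm{H}_\ell^{\mathrm{rd}}(U;(E^\vee,\nabla^\vee)) \iso \hyp_c^{d_X - \ell}\bigl(\wt X_D;\DRrdD(\mathbb{D}_X\cM)\bigr).$$

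With these identifications in place, Proposition \ref{prop:conjSabNCD} furnishes the local duality isomorphism $\mathrm{D}_{\wt X_D}\DRmodD(\cM) \iso \DRrdD(\mathbb{D}_X\cM)$ in $\DbRcC{\wt X_D}$, equivalently a perfect pairing $\DRmodD(\cM) \otimes_{\C_{\wt X_D}} \DRrdD(\mathbb{D}_X\cM) \to \omega_{\wt X_D}$ of $\R$-constructible sheaves. Applying Proposition \ref{prop:pairingref} in cohomological degree $\ell - d_X$ produces the desired perfect pairing of finite-dimensional $\C$-vector spaces, completing the proof.

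The principal obstacle is not the existence of the pairing, which is essentially built into the above machinery, but rather the verification that the resulting global pairing coincides with the explicit period integral \eqref{eqn:periodpairing} of Bloch--Esnault and Hien. This amounts to tracing Hien's chain-level presentation of rapid decay homology through the Kashiwara--Schapira duality between tempered distributions and Whitney functions that underlies Proposition \ref{prop:conjSabNCD}, and then checking compatibility with integration of differential forms against chains. Since the existence and non-degeneracy of the period pairing are already established in \cite{BE04} and \cite{Hie09}, the needed compatibility reduces to matching normalizations, and we shall be content to record that our pairing recovers theirs rather than re-derive it from scratch.
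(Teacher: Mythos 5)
Your proposal is correct and follows essentially the same route as the paper: identify $\HdR^\ell$ and $\Hrd_\ell$ with hypercohomology of $\DRmodD(\cM)$ and $\DRrdD$ of the dual connection on $\XD$, invoke the local duality on $\XD$ established in the normal crossing case, and pass to the global perfect pairing via Proposition~\ref{prop:pairingref}. The only substantive difference in routing is that the paper phrases the local duality through the enhanced-ind-sheaf objects, i.e.\ it uses the identifications $\DRmodD(\cM)\iso\bigl(\DRE(\cM)\bigr)^{\mathrm{mod}\,D}$ and $\DRrdD(\cM^\vee)\iso\bigl(\DE\DRE(\cM)\bigr)^{\mathrm{rd}\,D}$ (Proposition~\ref{prop:DRmod} and Corollary~\ref{cor:DRrdXD}) and then applies the $\XD$-version of Corollary~\ref{prop:duality3}, whereas you apply Proposition~\ref{prop:conjSabNCD} directly to the $\cD$-module de Rham complexes; given what is proved earlier in the paper these are interchangeable. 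One imprecision to fix: Hironaka's theorem only provides a compactification with $D=X\setminus U$ a simple normal crossing divisor, but Hien's identification of $\Hrd_\ell\bigl(U;(E^\vee,\nabla^\vee)\bigr)$ with the hypercohomology of the rapid decay complex on $\XD$ is established for a \emph{good} compactification, i.e.\ one along which $(E,\nabla)$ admits a good formal structure; the existence of such a compactification is the theorem of Kedlaya and Mochizuki, not resolution of singularities alone, and the paper is explicit on this point. Your closing remark about matching the abstract pairing with the explicit period integral \eqref{eqn:periodpairing} is a fair caveat, but note that the proposition only asserts the existence of a perfect pairing between the two spaces, and the paper's proof likewise does not verify this compatibility.
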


\begin{proof}
Recall from \cite{Hie09} that we can describe these de Rham cohomology groups and rapid decay homology groups as the hypercohomology groups of complexes of sheaves on the real blow-up space: We have
$$\HdR^\ell\big(U;(E,\nabla)\big) \iso \hyp^\ell\big(\XD;\DRmodD(\cM)\big)$$ (see \cite{Sab00}) and
$$
\Hrd_\ell\big(U;(E^\vee,\nabla^\vee)\big) \iso \hyp^{-\ell}\big(\XD;\DRrdD(\cM^\vee)\big).
$$
Here, $(X,D)$ is a good compactification of $U$, i.e., $D=X\setminus U$ has normal crossings and $(E,\nabla)$ admits a good formal structure with respect to $(X,D)$. (The existence of such a good compactification was conjectured and proved in certain cases in \cite{Sab00}, and shown in general in \cite{Ked10,Ked11} and \cite{Moc09,Moc11}.) Furthermore, $\cM$ is the (analytic) meromorphic connection associated to the algebraic connection $(E,\nabla)$. In other words, if $\cE$ is the algebraic $\cD_U$-module defined by the connection $(E,\nabla)$ and $j\colon U\hookrightarrow X$ is the inclusion, then $\cM=(j_*\cE)^\mathrm{an}$ is the associated analytic $\cD_X$-module and satisfies $\cM(*D)\iso \cM$. Moreover, $\cM^\vee$ denotes the dual meromorphic connection, which is the meromorphic connection associated to $(E^\vee,\nabla^\vee)$ (or, equivalently, $\cM^\vee=(\mathbb{D}_X\cM)(*D)$).

With these identifications, the claim follows after noting 
$$
\DRmod(\cM) \iso \big(\DRE(\cM)\big)^{\mathrm{mod}\, D},
$$
and
$$
\DRrdD(\cM^\vee) \iso \big(\DRE(\cM^\vee)\big)^{\mathrm{rd}\, D} \iso \big(\DRE(\mathbb{D}_X \cM)\big)^{\mathrm{rd}\, D} \iso \big(\DE(\DRE(\cM))\big)^{\mathrm{rd}\, D},
$$
and then applying (a version on $\XD$ of) Proposition \ref{prop:duality3}. (Note that $\XD$ is compact.)
\end{proof}

\vspace{1cm}


\subsection*{Acknowledgements}
First of all, we would like to thank Claude Sabbah who pointed out many questions studied in this article to us, and in particular inspired us with his recent work on nearby cycles for holonomic $\cD$-modules in the higher-dimensional case. We thank him and Andrea D'Agnolo for useful discussions and correspondence during the preparation of this work. Moreover, we are grateful to Pierre Schapira for explanations on the duality between Whitney functions and tempered distributions, which helped us in proving the various duality statements in Section~\ref{sec:TWduality}. We also want to thank Marco Hien for his interest in our work and for encouraging us to make explicit the connection between the different dualities in this context. The research of Andreas Hohl is funded by the Deutsche Forschungsgemeinschaft (DFG, German Research Foundation), Projektnummer 465657531.

\end{document}